\documentclass{article}

\usepackage{xcolor}
\usepackage{parskip}
\usepackage{graphicx}
\usepackage{hyperref}
\usepackage{algorithm}
\usepackage{algorithmic}
\usepackage{amsmath,amssymb, amsthm}
\usepackage[a4paper, total={6in, 8in}]{geometry}
\usepackage[backend=biber,
            style=authoryear,       
            citestyle=authoryear,   
            natbib=true,            
            maxnames=2,              
            minnames=1,
            maxbibnames=99,         
            dashed=false,           
            uniquename=false,       
            uniquelist=true,
            labeldateparts=true,         
            doi=false,                
            isbn=false,              
            url=false,                
            bibencoding=utf8        
           ]{biblatex}

\newcommand{\wrt}{wrt.\@}

\newcommand{\rhs}{right hand side}
\newcommand{\lhs}{left hand side}
\newcommand{\T}{^{\top}}
\newcommand{\inv}{^{-1}}
\newcommand{\iid}{i.i.d.\@}
\newcommand{\st}{s.t.\@}

\newcommand{\twonorm}[1]{\|#1\|_{2}}

\newcommand{\opnorm}[1]{\|#1\|_{op}}
\newcommand{\frobnorm}[1]{\|#1\|_{F}}
\newcommand{\abs}[1]{|#1|}

\newcommand{\Abs}[1]{\left|#1\right|}

\newcommand{\E}{\mathbb{E}}

\newcommand{\var}[2][]{\operatorname{Var}_{#1}(#2)}

\newcommand{\cov}[2][]{\operatorname{Cov}_{#1}(#2)}
\newcommand{\cor}[2][]{\operatorname{Cor}_{#1}(#2)}
\newcommand{\ent}[2][]{\operatorname{Ent}_{#1}(#2)}

\newcommand{\indicator}[1]{\mathbf{1}{\{#1\}}}

\newcommand{\DKL}[2]{{\operatorname{D_{KL}}(#1||#2)}} 

\renewcommand{\P}{\mathbb{P}} 

\newcommand{\cardinality}[1]{|#1|}
\newcommand{\R}{\mathbb{R}}
\newcommand{\N}{\mathbb{N}}

\newcommand{\diagonal}[1]{\operatorname{diag}(#1)}
\newcommand{\eigmax}[1]{\lambda_{\max}{(#1)}}
\newcommand{\eigmin}[1]{\lambda_{\min}{(#1)}}

\newcommand{\one}{\mathbf{1}}

\newcommand{\bracket}[1]{[#1]}
\newcommand{\curly}[1]{\{{#1}\}}

\newcommand{\Parent}[1]{\left(#1\right)}
\newcommand{\Bracket}[1]{\left[#1\right]}

\theoremstyle{plain}

\newtheorem{definition}{Definition}[section]
\newtheorem{lemma}[definition]{Lemma}
\newtheorem{theorem}[definition]{Theorem}
\newtheorem{corollary}[definition]{Corollary}
\newtheorem{auxiliary}[definition]{Auxiliary Result}
\newtheorem{assumption}[definition]{Assumption}

\theoremstyle{definition}

\newtheorem*{remark}{Remark}

\title{On McDiarmid's Inequality under Dependence \\ via Approximate Tensorization of Entropy}
\author{Valentin Roth}
\date{May 2026} 

\begin{document}

\maketitle

\begin{abstract}

    We argue that dependent versions of McDiarmid's inequality are a useful but underutilized tool in mathematical statistics, learning theory and theoretical computer science. To make this point, we first highlight that approximate tensorization of entropy (ATE) implies McDiarmid's via the Entropy Method. Second, we derive McDiarmid's inequality for non-isotropic Gaussian random vectors $X \sim \mathcal N(\mu, \Sigma)$ through ATE with a constant of the order of the condition number of $\Sigma$. We both independently obtain this ATE through a simple application of stochastic localization and also discuss how a more general ATE for the Gibbs sampler due to \cite{ascolani:lavenant:zanella2026} generalizes McDiarmid's-like concentration to strongly log-concave and log-smooth probability measures. We then apply the resulting concentration inequalities to resolve a question on the concentration of $\operatorname{sign}(X)$ posed by Simone Bombari, investigate Erdős-Rényi graphs under dependence and prove a Dvoretzky-Kiefer-Wolfowitz-type inequality for observations from a joint measure fulfilling ATE and continuous marginal CDFs. For the class of strongly log-concave and log-smooth measures, this result improves upon a prior Dvoretzky-Kiefer-Wolfowitz-type inequality for non-i.i.d.\ observations due to \cite{bobkov:goetze2010}, by establishing the expected $1/\sqrt{n}$-rate of convergence under weak dependence instead of $n^{-1/3}$. 

\end{abstract}

\setcounter{tocdepth}{2} 
\tableofcontents

\section{Introduction}

Concentration of measure is at the heart of statistics, learning theory and theoretical computer science, most often used in form of Chernoff-type concentration inequalities for sub-Gaussian and sub-exponential random variables (See e.g., \cite{motwani:raghavan1995, vandervaart:wellner1996, mohri:rostamizadeh:talwalkar2018, bach2024}). Such concentration inequalities are an integral part of the toolbox of modern theoretical statisticians and computer scientists and were popularized in the machine learning community through monographs by \cite{boucheron:lugosi:massart2013, vershynin2018, wainwright2019}. Their essence is summarized in the following quote due to \cite{talagrand1996}: 
\begin{center}
    \textit{"A random variable that depends (in a 'smooth' way) on the influence of many \\ independent random variables (but not too much on any of them) is essentially constant"}.
\end{center}

In this work we highlight that the statement above also extends to dependent random variables, as long as dependence is not too strong. One well-known example of this is Gaussian and more generally log-Sobolev Lipschitz concentration (\cite[Theorem 5.3]{ledoux2001}). It allows us to concentrate Lipschitz functions of Gaussian random vectors $X \sim \mathcal N(\mu, \Sigma)$ where $\Sigma \in \R^{n\times n}$ does not have to be the identity. Hence, its entries can be non-\iid{} Dependence between the entries of $X$ is thereby captured in the covariance matrix $\Sigma$ and enters the concentration inequality in form of its largest eigenvalue $\opnorm{\Sigma}$. This suffices to establish fundamental results such as the concentration of an empirical mean of the marginals $\bar X_n := \frac 1 n \sum_{i=1}^n X_i$ around its true mean $\E[\bar X_n]$ and thereby allows us to characterize the change of the rate of convergence depending on the strength of dependence. 

However, Gaussian Lipschitz concentration fails to cover many simple tasks such as concentrating sums of indicators of dependent Gaussian random variables, due to the failing Lipschitzness \wrt{} Euclidean distance. In the independent setting, this can be easily achieved through an application of Hoeffding's inequality that provides sub-Gaussian tails for sums of bounded random variables (\cite{hoeffding1963}). McDiarmid's inequality generalizes the behavior of Hoeffding's inequality and provides sub-Gaussian tails for "smooth" functions of independent random variables in the sense of Talagrand's quote (\cite{mcdiarmid1989, mcdiarmid1998}). The inequality is also known as the bounded difference inequality, which stems from the following "smoothness" assumption it imposes on the function $f : \mathcal X^n \to \R$ to be concentrated. 

\begin{definition}{Bounded Differences Property}

    A function $f : \mathcal X^n \to \R$ fulfills the bounded differences property if for $c_1,...,c_n \geq 0$ and all $i \in [n]$: 
    \begin{align*}
        \sup_{y, x_1,...,x_n \in \mathcal X} |f(x_1,...,x_n) - f(x_1,...x_{i-1} y, x_{i+1},...,x_n)| \leq c_i.
    \end{align*}
\end{definition}

Concretely, this condition implies that $f$ is Lipschitz \wrt{} the Hamming distance $d_H(x,y) := \sum_{i=1}^n \indicator{x_i \neq y_i}$ with Lipschitz constant $L := \max_{i\in[n]} c_i$. Likewise, if $f$ is $L$-Hamming-Lipschitz, it fulfills bounded differences with constants $L$. This type of Lipschitzness is more suitable to the functions of interest in many problems of combinatorial nature, a collection of which are outlined in \cite{mcdiarmid1998}. The bounded difference property of $f$ and independence between random variables is then enough to obtain the following sub-Gaussian tail bound for $f$ around its mean. 

\begin{theorem}{McDiarmid's Inequality (\cite{mcdiarmid1989})}
    \label{thm:McDiarmids}

    Let $X \in \mathcal X^n$ have independent entries. Assume $f : \mathcal X^n \to \R$ fulfills the bounded differences property with constants $c := (c_1,...,c_n)\T$. Then, for all $t > 0$ we have
    \begin{align*}
        \P\Parent{|f(X) - \E[f(X)]| \geq t} 
        &\leq 2\exp\Parent{-\frac{2t^2}{\twonorm{c}^2}}. 
    \end{align*}
\end{theorem}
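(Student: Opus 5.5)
The plan is the classical martingale (Azuma--Hoeffding) argument, since the constant $2$ in the exponent is exactly what Hoeffding's lemma delivers. Writing $\mathcal F_k := \sigma(X_1,\dots,X_k)$, I introduce the Doob martingale $Z_k := \E[f(X)\mid \mathcal F_k]$ for $k=0,\dots,n$, so that $Z_0 = \E[f(X)]$ and $Z_n = f(X)$. Then
\begin{align*}
f(X) - \E[f(X)] = \sum_{k=1}^n D_k, \qquad D_k := Z_k - Z_{k-1}.
\end{align*}
It suffices to prove the one-sided bound $\P(f(X)-\E[f(X)]\ge t)\le \exp(-2t^2/\twonorm{c}^2)$. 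The lower tail follows by applying this to $-f$, which has the same bounded differences, and a union bound then gives the factor $2$.

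\textbf{Key step (main obstacle):} show that, conditionally on $\mathcal F_{k-1}$, each increment $D_k$ lies in an interval of length at most $c_k$ whose endpoints are $\mathcal F_{k-1}$-measurable. This is where independence is essential. Because the $X_i$ are independent, I can write
\begin{align*}
Z_k = g_k(X_1,\dots,X_k), \qquad g_k(x_1,\dots,x_k) := \E\bigl[f(x_1,\dots,x_k,X_{k+1},\dots,X_n)\bigr],
\end{align*}
with no conditioning needed on the tail variables. Similarly, $Z_{k-1} = \E[g_k(x_1,\dots,x_{k-1},X_k)]$ evaluated at $x_j = X_j$ for $j<k$. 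I then set
\begin{align*}
L_k := \inf_y g_k(X_1,\dots,X_{k-1},y) - Z_{k-1}, \qquad U_k := \sup_y g_k(X_1,\dots,X_{k-1},y) - Z_{k-1}.
\end{align*}
These are $\mathcal F_{k-1}$-measurable and satisfy $L_k\le D_k\le U_k$. The bounded differences property, applied pointwise inside the expectation over $X_{k+1},\dots,X_n$, gives $U_k - L_k \le c_k$.

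\emph{Measurability caveat.} The infimum and supremum over $y$ may raise measurability issues. I would sidestep these by working with $g_k(\cdot,y) - g_k(\cdot,y')$ for fixed pairs $(y,y')$. Alternatively, I would note that only the existence of some $\mathcal F_{k-1}$-measurable interval of length $c_k$ containing $D_k$ is needed, and take $L_k$ to be the conditional essential infimum.

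Next, since $\E[D_k\mid\mathcal F_{k-1}]=0$ and $D_k$ lies in an interval of length $c_k$ determined by $\mathcal F_{k-1}$, Hoeffding's lemma applied conditionally gives
\begin{align*}
\E[e^{\lambda D_k}\mid \mathcal F_{k-1}] \le \exp(\lambda^2 c_k^2/8).
\end{align*}
Peeling off the increments one at a time via the tower property yields
\begin{align*}
\E\bigl[e^{\lambda(f(X)-\E f(X))}\bigr] \le \exp\Bigl(\lambda^2\twonorm{c}^2/8\Bigr).
\end{align*}
The Chernoff bound then gives $\P(f(X)-\E[f(X)]\ge t)\le \exp(-\lambda t + \lambda^2\twonorm{c}^2/8)$. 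Optimizing at $\lambda = 4t/\twonorm{c}^2$ produces $\exp(-2t^2/\twonorm{c}^2)$, as required.

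Hoeffding's lemma is taken as standard: it follows from convexity of $e^{\lambda x}$ together with a second-order Taylor bound on the log-moment generating function of a two-point distribution. I expect the only genuinely delicate point to be the interval-length claim for $D_k$, because it is exactly where independence enters. It is also the step that must be replaced by approximate tensorization of entropy in the dependent setting the paper goes on to treat.
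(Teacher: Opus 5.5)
Your proof is correct, but it is McDiarmid's original martingale (Azuma--Hoeffding) argument. The paper instead recovers Theorem \ref{thm:McDiarmids} as the $\kappa=1$ case of its entropy-method proof of Theorem \ref{thm:McDiarmidsATE}. That proof has three steps:
\begin{itemize}
\item Tensorization of entropy for product measures (Lemma \ref{lem:enttens}) bounds $\ent[\nu]{e^{\lambda f}}$ by $\sum_i \E[\ent[\nu(\cdot|X_{-i})]{e^{\lambda f}}]$.
\item Each conditional entropy is at most $\frac{c_i^2\lambda^2}{8}\E[e^{\lambda f}\mid X_{-i}]$ by the entropy form of Hoeffding's lemma (Corollary \ref{cor:hoeffdingsforherbst}), since $x_i\mapsto f(X_{-i},x_i)$ has range at most $c_i$.
\item The Herbst argument (Lemma \ref{lem:herbstargument}) turns $\ent[\nu]{e^{\lambda f}}\le \frac{\lambda^2\twonorm{c}^2}{8}\E[e^{\lambda f}]$ into the MGF bound $\exp(\lambda^2\twonorm{c}^2/8)$.
\end{itemize}
That is exactly the bound your peeling produces, and the Chernoff step then coincides with yours.

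Your route is more elementary, with no entropy functional and no differential inequality in $\lambda$. The paper's route is chosen because it confines every use of independence to a single functional inequality. The per-coordinate range bound given $X_{-i}$ is deterministic and holds under arbitrary dependence. So replacing Lemma \ref{lem:enttens} by $\kappa$-ATE immediately yields the factor $\kappa$.

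For the same reason, your closing remark is slightly off. ATE does not replace your interval-length step; it replaces tensorization, which is a step in a different proof. Under dependence, $g_k$ would integrate against the conditional law of $(X_{k+1},\dots,X_n)$ given $X_1,\dots,X_k$. The range of $D_k$ can then genuinely exceed $c_k$. Repairing the martingale proof requires controlling that influence through couplings or Wasserstein matrices, as in the martingale-method works cited in the introduction, not through ATE.

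Finally, your measurability worry disappears with a simpler argument. Fix $x_1,\dots,x_{k-1}$, apply Hoeffding's lemma to the centered variable $g_k(x_1,\dots,x_{k-1},X_k)$ with $X_k$ drawn from its marginal law, and integrate over the past by Fubini.
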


Given that McDiarmid's is applicable in cases where Gaussian Lipschitz concentration fails to apply, but suffers from the restriction to independent random variables, it is now natural to ask: 
\begin{center}
    \textit{Is there a simple tool that let's us apply \\ Hamming-Lipschitz concentration under dependence?}
\end{center}

Surely, when the dependence structure is known and only a few of the entries depend on each other, we would expect a similar behavior as in Theorem \ref{thm:McDiarmids} as we could treat dependent observations together and adjust the effective sample-size accordingly. This idea is formalized by a line of work that encodes information on the dependence structure between entries in dependency graphs, partitions entries into independent sets and then uses the (fractional) chromatic number to capture the strength of dependence. \cite{janson2004} was the first to derive a Hoeffding-type concentration inequality for sums of dependent random variables using this technique, followed by \cite{usunier:amini:gallanari2005, zhang:liu:wang:wang2019} that extended it to functions with the bounded difference property. 

Even earlier \cite{azuma1967} proved the Azuma-Hoeffding inequality, a generalization of Hoeffding's inequality, which holds for martingale differences instead of the special case of sums of independent random variables and is now termed the Martingale Method. This approach of capturing dependence through martingales and filtrations was later extended to bounded differences (\cite{vandegeer2002}) and \cite{kantorovich:ramanan2008} exploit it to obtain a McDiarmid's-like concentration inequality for Hamming-Lipschitz functions on discrete state spaces. When combined with Wasserstein matrices \cite{kontorovich:raginsky2017} show that the martingale method can unify some results obtained through other approaches to concentration with bounded differences, e.g., via the Dobrushin uniqueness condition (\cite{djellout:guillin:wu2004, wu2006, chazottes:collet:kuelske2007, wang:wu2014, paulin2014}) or couplings (\cite{kantorovich:ramanan2008}). 

An alternative to dependency graphs and the martingale method is to capture dependence using information theoretic concepts. For example, \cite{esposito:mondelli2023} establish McDiarmid's under dependence using Hellinger integrals. More widely used, however, is the entropy method, which derives McDiarmid's-type inequalities relying on so called sub-additivity or tensorization of entropy (See e.g. \cite[Theorem 6.2]{boucheron:lugosi:massart2013}). This notion already appearing in \cite{gross1975} holds for product measures, hence \iid{} random variables. The proof approach extends to dependent settings through approximate tensorization of entropy (ATE). Similar to log-Sobolev Lipschitz concentration where the log-Sobolev constant $\rho \geq 1$ captures the strength of dependence, ATE captures dependence in a constant $\kappa \geq 1$ that is 1 only for product measures. Katalin Marton pioneered this relaxation and established it first for Euclidean spaces (\cite{marton2013}). Thereafter, \cite{marton2015} and \cite{caputo:menz:tetali2015} derived ATE under versions of Dobrushin's uniqueness condition on discrete product spaces and \cite{goetze:sambale:sinulis2019} provided concentration inequalities under higher-order bounded differences for discrete product spaces that exploit ATE. Since, ATE has become a central notion in the study of mixing times of the Glauber and heat-bath block dynamics also known as Gibbs samplers on discrete spin glasses as it has direct implications on their mixing time (See e.g., \cite{caputo2022, blanca:caputo:chen:parisi:stefankovic2022, anari:koehler:vuong2024} for proofs of ATE through spectral and entropic independence). Further, \cite{eldan:shamir2022} and \cite{eldan:koehler:zeitouni2022} show Hamming-Lipschitz concentration and approximate variance tensorization for spin glasses through stochastic localization. These techniques are generalized and refined in \cite{chen:eldan2022} and \cite{anari:jain:koehler:pham:vuong2024}. While the study of dependence in spin glasses is interesting, it does not straightforwardly relate to the study of dependence in more classical problems in estimation and learning theory.

Only recently, works on ATE by \cite{caputo:salez2026} and \cite{ascolani:lavenant:zanella2026} again focused on the Euclidean case first considered by \cite{marton2013}. Especially the results of \cite{ascolani:lavenant:zanella2026} are remarkable as they imply ATE for Gaussian and strongly log-concave and log-smooth measures. Through McDiarmid's inequality under ATE this provides a versatile analogue to Gaussian and log-Sobolev Lipschitz concentration for the Hamming-Lipschitz case: 
\begin{center}
    \textit{Gaussian Hamming-Lipschitz concentration}. 
\end{center}

This manuscript aims to provide a comprehensive derivation of this tool that is mostly implicit and hidden between the parts of the literature on Markov chain mixing and concentration inequalities\footnote{\cite[Section 6]{kontorovich:raginsky2017} and \cite[Fact 3.5]{chen:liu:vigoda2021} outline that ATE generally implies McDiarmid's-like inequalities. However, they do not provide useful ATEs for the continuous setting.}. 

\subsection{Organization}

In Section \ref{sec:mcdiarmid} we first derive McDiarmid's inequality using ATE. In Section \ref{sec:ATE} we then provide an independent derivation of ATE for $\mathcal N(\mu, \Sigma)$ with $\kappa$ of the order of the condition number of $\Sigma$ through stochastic localization. Moreover, we discuss how the results by \cite{ascolani:lavenant:zanella2026} sharpen this ATE constant and establish ATE for a class of probability measures including all strongly log-concave and log-smooth ones. Section \ref{sec:applications} contains three increasingly involved examples of applications of McDiarmid's under ATE: a short proof that $\operatorname{sign}(X)$ is dimension-free sub-Gaussian, a simple example of how McDiarmid's reveals the effect of dependence in Erdős-Rényi graphs and a Dvoretzky-Kiefer-Wolfowitz-type inequality under ATE. We consider this inequality a \textit{main contribution} of this work as it is the first to establish the expected $1/\sqrt{n}$-rate of convergence under ATE, in particular for weakly dependent Gaussians. Finally, Section \ref{sec:discussion} discusses our work. 

\subsection{Notation}

Let $[n] := \curly{1,2,...,n}$, $\indicator{A}$ be the indicator function for a set $A$ and $A \sqcup B$ be the disjoint union between sets. Let $\one_d \in \R^d$ be the all ones vector and $I_d \in \R^{d \times d}$ the identity matrix. For matrices $A,B \in \R^{d\times d}$ denote their positive definite and positive semi-definite order as $A\prec B$ and $A\preceq B$ and operator norm as $\opnorm{A}$. For $x,y\in\mathcal{X}^n$, their Hamming distance is $d_H(x,y) := \sum_{i=1}^n \indicator{x_i \neq y_i}$ and $x_{-i} := (x_1,...,x_{i-1}, x_{i+1},...,x_n)\T$ denotes all but the $i$-th entry of $x \in \mathcal X^n$. For a probability measure $\nu$ that has a density \wrt{} Lebesgue measure $dx$ we abuse notation and write $\nu(x)$ for its density. A measure on $\R^d$ is $\alpha$-strongly log-concave and $\beta$-log-smooth if $\nu(x) \propto \exp(-V(x))$ is \st{} $\alpha I_d \preceq \nabla V(x) \preceq \beta I_d$ with $0 < \alpha \leq \beta < \infty$ for all $x \in \R^d$. Let $\nu_i$ and $\nu(\cdot|x_{-i})$ be its marginal and conditional measures. For two sequences $(a_n)_{n\in\N}$ and $(b_n)_{n\in\N}$ we write $b_n = O(a_n)$ or $a_n \lesssim b_n$ if $\exists C > 0$ and $n_0 \in \N$ \st{} $\forall n \geq n_0, |\frac{b_n}{a_n}| \leq C$. We write $a_n\asymp b_n$ if $a_n\lesssim b_n$ and $b_n\lesssim a_n$.

\section{McDiarmid's Inequality}
\label{sec:mcdiarmid}

\subsection{McDiarmid's via Entropy Method}

Concentration via the entropy method is discussed in detail in Chapter 6 of \cite{boucheron:lugosi:massart2013}. Alternatively, Chapter 3 in \cite{raginsky:igal2013} also outlines the proof idea behind the Entropy Method from a more information theoretic viewpoint. The key inequality behind the Entropy Method for McDiarmid's is tensorization of entropy, where entropy is defined as follows. 

\begin{definition}{Entropy}

    Let $\nu$ be a probability measure on $\R^n$. For an integrable $f : \R^n \to [0, \infty)$ \st{} $\int_{\R^n} f\abs{\log(f)}d\nu < \infty$, 
    \begin{align*}
        \ent[\nu]{f} := \int_{\R^n} f \log(f) d\nu - \Parent{\int_{\R^n} f d\nu}\log\Parent{\int_{\R^n} fd\nu}. 
    \end{align*}

\end{definition}

Whenever we use entropy the in the following, we assume that $f$ is \st{} the relevant quantities exist. For random variables $X \in \mathcal X^n$ with \iid{} entries, entropy tensorization refers to the concept that the entropy is sub-additive in the sense that it is upper bounded by the sum of the expectations of the conditional entropies, i.e., the entropies \wrt{} the conditional measure $\nu(\cdot|X_{i-1})$ on $\mathcal X_i$. 

\begin{lemma}{Entropy Tensorization (Lemma 4.1, \cite{caputo2022})}
    \label{lem:enttens}

    Let $\nu :=\nu_1 \otimes \dotsm \otimes \nu_n$ be a product probability measure on the product space $\mathcal X^n := (\mathcal X_1 \times \dotsm \times \mathcal X_n)$. Suppose that $X \sim \mathcal \nu$. Then, for all $f : \mathcal X^n \to [0,\infty)$,
    \begin{align*}
        \ent[\nu]{f}
        \leq \sum_{i=1}^n \E[\ent[\nu_i]{f}]
        = \sum_{i=1}^n \E[\ent[\nu(\cdot | X_{-i})]{f}].
    \end{align*}
\end{lemma}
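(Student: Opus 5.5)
The plan is to prove sub-additivity of entropy through the variational (duality) characterization of entropy together with a telescoping decomposition of $\log f$ along the coordinates. The equality part of the statement is immediate and I dispose of it first. For a product measure, the conditional law $\nu(\cdot|X_{-i})$ of the $i$-th coordinate given the others is exactly $\nu_i$. Hence $\ent[\nu(\cdot|X_{-i})]{f} = \ent[\nu_i]{f}$ pointwise in $X_{-i}$, where $f$ is viewed as a function of $x_i$ with $x_{-i}$ frozen, and taking expectations gives the equality of the two sums.

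For the inequality, I normalize $\int f\,d\nu = 1$; this is harmless because both sides are $1$-homogeneous in $f$, and the case $\int f\,d\nu=0$ is trivial. Then I use the duality formula
\begin{align*}
\ent[\mu]{f} = \sup\curly{\textstyle\int f g\, d\mu : \int e^{g} d\mu \leq 1},
\end{align*}
or, more elementarily, the inequality $\int f g\,d\mu \leq \ent[\mu]{f}$ whenever $\int e^g d\mu = 1$, which follows from Jensen or Gibbs' inequality. Define the partial marginalizations $f_i(x_i,\dots,x_n) := \int f \, d\nu_1\cdots d\nu_{i-1}$, so that $f_1 = f$ and $f_{n+1} = 1$. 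Telescoping gives
\begin{align*}
\log f = \sum_{i=1}^n g_i, \qquad g_i := \log f_i - \log f_{i+1},
\end{align*}
so that $\ent[\nu]{f} = \int f\log f\,d\nu = \sum_i \int f g_i\,d\nu$.

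The key step is to bound each term $\int f g_i\, d\nu$ by $\E[\ent[\nu_i]{f}]$. Fix all coordinates except $x_i$. Since $f_{i+1}$ is the $\nu_i$-integral of $f_i$, the function $e^{g_i} = f_i/f_{i+1}$ integrates to $1$ against $\nu_i$ in the variable $x_i$. Applying the duality inequality for $\nu_i$ conditionally on $x_{-i}$ yields
\begin{align*}
\int f g_i\, d\nu_i \leq \ent[\nu_i]{f}.
\end{align*}
Integrating over the remaining coordinates, using Fubini and the product structure, gives $\int f g_i\,d\nu \leq \E[\ent[\nu_i]{f}]$. Summing over $i$ concludes the proof.

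I expect the main obstacle to be measure-theoretic rather than conceptual. Points where $f_i$ or $f_{i+1}$ vanish make $g_i$ ill-defined, and the integrability of $f g_i$ has to be justified. I would handle this first for $f$ bounded below by $\varepsilon>0$ and above by a constant, where everything is finite and Fubini applies freely. I would then pass to general $f$ by replacing $f$ with the truncation $\min(f,M)+\varepsilon$ and using monotone or dominated convergence in both sides; this is legitimate given the standing assumption $\int f\abs{\log f}\,d\nu<\infty$.
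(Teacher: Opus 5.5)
Your proof is correct. The paper does not prove this lemma; it imports it from \cite{caputo2022}, so there is no argument in the paper to set yours against. What you give is the classical duality proof. It combines the Gibbs inequality $\int f g\,d\mu \le \ent[\mu]{f}$ for $\int e^{g}\,d\mu = 1$ with the martingale-type telescoping $\log f = \sum_i (\log f_i - \log f_{i+1})$. It works unchanged on the abstract product space $\mathcal X_1\times\dots\times\mathcal X_n$ of the statement, with no densities needed.

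The product structure is used at exactly one point: $f_{i+1}$ is the $\nu_i$-integral of $f_i$, so $e^{g_i}$ is a $\nu_i$-probability density for every frozen $x_{-i}$. For a non-product $\nu$, the natural analogue $f_i := \E[f \mid X_i,\dots,X_n]$ makes $e^{g_i}$ a density only for $\nu(\cdot \mid x_{i+1},\dots,x_n)$, not for $\nu(\cdot \mid x_{-i})$. That mismatch is precisely what the constant $\kappa$ in Definition~\ref{def:ATE} has to absorb.

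One tightening of your limiting step: only dominated convergence works here, since $\phi(u) := u\log u$ is neither monotone nor of one sign. Take $h = \min(f,M)+\varepsilon$ with $\varepsilon \le 1$. Then $\abs{\phi(h)} \le G := \max\{e^{-1}, (f+1)\log(f+1)\}$, which is $\nu$-integrable because $\int f\abs{\log f}\,d\nu < \infty$. Jensen gives $-e^{-1} \le \phi(\int h\,d\nu_i) \le \int \phi(h)\,d\nu_i$, so $\abs{\ent[\nu_i]{h}} \le 2\int G\,d\nu_i + e^{-1}$, and this bound is integrable in $x_{-i}$ by Fubini. Hence both sides converge, and the inequality passes from bounded, bounded-below $f$ to general $f$.
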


Similarly, we formalize ATE in Definition \ref{def:ATE} below, where now a constant $\kappa \geq 1$ captures the deviation from the behavior of a product measure and thus bigger $\kappa$ indicates more dependence. We refer to the case where $\kappa \asymp 1$ as the case of weak dependence. 

\begin{definition}{Approximate Tensorization of Entropy}
    \label{def:ATE}

    A probability measure $\nu$ on a space $\mathcal X^n$ fulfills ATE with constant $\kappa \geq 1$ if for all $f : \mathcal X^n \to [0,\infty)$,
    \begin{align*}
        \ent[\nu]{f}
        \leq \kappa \cdot  \sum_{i=1}^n \E[\ent[\nu(\cdot | X_{-i})]{f}]. 
    \end{align*}
\end{definition}

Section \ref{sec:ATE} discusses how this definition can be established, e.g., for multivariate Gaussians. For now, we keep working with this abstract definition and establish McDiarmid's inequality for all measures fulfilling ATE. For the special case of $1$-ATE and product measures, these arguments recover Theorem \ref{thm:McDiarmids}. We want to reiterate that the proof hereafter is entirely known and based on the proof of \cite[Theorem 6.2]{boucheron:lugosi:massart2013}. 

\begin{theorem}{McDiarmid's Inequality under ATE}
    \label{thm:McDiarmidsATE}

    Let $X \sim \nu$ on $\mathcal X^n$ that fulfills $\kappa$-ATE. Assume $f : \mathcal X^n \to \R$ fulfills the bounded differences property with constants $c = (c_1,...,c_n)\T$. Then, for all $t > 0$ we have
    \begin{align*}
        \P\Parent{|f(X) - \E[f(X)]| \geq t} 
        &\leq 2\exp\Parent{-\frac{2t^2}{\kappa \twonorm{c}^2}}. 
    \end{align*}
\end{theorem}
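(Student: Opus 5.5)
We follow the Herbst argument of \cite[Theorem 6.2]{boucheron:lugosi:massart2013}, replacing exact tensorization (Lemma \ref{lem:enttens}) by $\kappa$-ATE (Definition \ref{def:ATE}). Write $Z := f(X)$. For $\lambda \in \R$ set $F(\lambda) := \E[e^{\lambda Z}]$, which is finite because bounded differences make $f$ bounded up to an additive constant on the support. We apply ATE to the nonnegative function $g := e^{\lambda f}$:
\begin{align*}
    \ent[\nu]{e^{\lambda f}} \leq \kappa \sum_{i=1}^n \E\Bracket{\ent[\nu(\cdot|X_{-i})]{e^{\lambda f}}}.
\end{align*}

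The main step is to bound each conditional entropy. Fix $x_{-i}$. Under $\nu(\cdot|x_{-i})$ the function $y \mapsto \lambda f(x_1,\dots,y,\dots,x_n)$ takes values in an interval of length at most $\abs{\lambda} c_i$, by the bounded differences property. The one-dimensional entropy bound for bounded variables (\cite[Theorem 6.6 / Lemma 6.2]{boucheron:lugosi:massart2013}, proved via the variational formula and Hoeffding's lemma) states that for a random variable $Y$ with values in an interval of length $a$,
\begin{align*}
    \ent{e^{\lambda Y}} \leq \frac{\lambda^2 a^2}{8}\E[e^{\lambda Y}].
\end{align*}
One way to see this is to write $\ent{e^{\lambda Y}}/\E[e^{\lambda Y}] = \lambda\psi'(\lambda)-\psi(\lambda) = \int_0^\lambda s\psi''(s)\,ds$, where $\psi(s) = \log\E[e^{sY}]$ and $\psi'' \leq a^2/4$ is the variance of $Y$ under a tilted measure supported on the same interval. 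Applying this conditionally and then taking expectations over $X_{-i}$ gives $\E[\ent[\nu(\cdot|X_{-i})]{e^{\lambda f}}] \leq \frac{\lambda^2 c_i^2}{8} F(\lambda)$. This step uses only the conditional measures, so the dependence in $\nu$ causes no trouble. The only care needed is measurability and existence of regular conditional distributions, which we assume as in Definition \ref{def:ATE}.

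Summing over $i$ yields $\lambda F'(\lambda) - F(\lambda)\log F(\lambda) \leq \frac{\kappa \lambda^2 \twonorm{c}^2}{8} F(\lambda)$. We then run Herbst's argument. Setting $H(\lambda) := \lambda^{-1}\log F(\lambda)$, the inequality reads $H'(\lambda) \leq \kappa\twonorm{c}^2/8$ for $\lambda>0$, with $H(0^+) = \E[Z]$. Integrating gives $\log \E[e^{\lambda(Z-\E Z)}] \leq \lambda^2\kappa\twonorm{c}^2/8$. The Chernoff bound optimized at $\lambda = 4t/(\kappa\twonorm{c}^2)$ gives $\P(Z-\E Z\geq t)\leq \exp(-2t^2/(\kappa\twonorm{c}^2))$. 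Applying the same argument to $-f$, which has the same constants $c_i$, and taking a union bound gives the two-sided statement with the factor $2$. We expect no real obstacle. The only delicate point is the conditional application of the bounded-interval entropy lemma with the sharp constant $1/8$, which is what produces the factor $2$ in the exponent rather than a weaker constant.
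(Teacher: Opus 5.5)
Your proposal is correct and follows the paper's proof: $\kappa$-ATE applied to $e^{\lambda f}$, the conditional bound $\frac{\lambda^2 c_i^2}{8}\E[e^{\lambda f}\mid X_{-i}]$ via $\lambda\psi'(\lambda)-\psi(\lambda)=\int_0^\lambda s\psi''(s)\,ds$ with $\psi''\le c_i^2/4$, the Herbst argument, and the Chernoff bound at $\lambda = 4t/(\kappa\twonorm{c}^2)$. The only difference is cosmetic. You skip the paper's centering of $f$ around $m_i$ and its use of homogeneity of entropy. This is legitimate, since both $\lambda\psi'(\lambda)-\psi(\lambda)$ and the tilted variance $\psi''$ are invariant under shifting $Y$ by a constant.
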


\begin{remark}
    There are multiple extensions of Theorem \ref{thm:McDiarmids}, many of which should also extend to \ref{thm:McDiarmidsATE}, e.g., when bounded differences only holds with high probability (\cite{kutin2002, combes2024}) or the $c_i$ are functions of all entries but $x_i$, i.e., functions of $x_{-i}$ (\cite[Subsection 6.2]{boucheron:lugosi:massart2013}).
\end{remark}

\begin{proof}

    This proof combines ATE with Hoeffding's Lemma and the Herbst Argument outlined in Subsection \ref{sec:herbstargument}. Concretely, by $\kappa$-ATE and using the test function $\phi(x) := \exp(\lambda f(x))$ with $\lambda > 0$ we have the following line of arguments where the second inequality $(\star)$ remains to be shown: 
    \begin{align*}
        \ent[\nu]{\exp(\lambda f)} 
        &\leq \kappa \cdot \sum_{i=1}^n \E[\ent[\nu(\cdot | X_{-i})]{\exp(\lambda f)}]
        \overset{(\star)}\leq \kappa \cdot \E\Bracket{\sum_{i=1}^n \frac{c_i^2 \lambda^2}{8} \E\Bracket{\exp(\lambda f(X)) |X_{-i}}} \\
        &= \kappa \cdot \sum_{i=1}^n \frac{c_i^2 \lambda^2}{8} \E\Bracket{\exp(\lambda f(X))} 
        = \frac{\lambda^2 \kappa\twonorm{c}^2}{8} \E\Bracket{\exp(\lambda f(X))}. 
    \end{align*}

    The bound $\ent[\nu(\cdot | X_{-i})]{\exp(\lambda f)} \leq \frac{c_i^2\lambda^2}{8} \E[\exp(\lambda f(X))|X_{-i}]$ will hold by a specific form of Hoeffding's Lemma that is suitable for applying the Herbst Argument (See Corollary \ref{cor:hoeffdingsforherbst} and Lemma \ref{lem:herbstargument}). 
    
    As $f$ does not have conditional mean zero, we work with the centered $g_{X_{-i}}(x_i) := f(X_{-i}, x_i) - m_i$ where $m_i = \E[f(X_{-i}, X_i)|X_{-i}]$. Then, by the bounded differences property of $f$ it holds that
    \begin{align*}
        \sup_{x_i \in \R} g_{X_{-i}}(x_i) - \inf_{x_i^\prime \in \R} g_{X_{-i}}(x_i^\prime)
        = \sup_{x_i, x_i^\prime \in \R} |g_{X_{-i}}(x_i) - g_{X_{-i}}(x_i^\prime)|
        \leq c_i. 
    \end{align*}

    Above, $g_{X_{-i}}(x_i) \in [a_i,b_i]$ where $b_i-a_i = c_i$ and since $g_{X_{-i}}(x_i)$ also has zero mean, by Corollary \ref{cor:hoeffdingsforherbst}, 
    \begin{align*}
        \ent[\nu(\cdot|X_{-i})]{\exp(\lambda g_{X_{-i}})}
        &\leq \frac{c_i^2 \lambda^2}{8} \E[\exp(\lambda g_{X_{-i}}(X_i))| X_{-i}], \\
        \Leftrightarrow e^{-\lambda m_i} \cdot \ent[\nu(\cdot|X_{-i})]{\exp(\lambda f)}
        &\leq \frac{c_i^2 \lambda^2}{8} \E[\exp(\lambda f(X))| X_{-i}] \cdot e^{-\lambda m_i}. 
    \end{align*}

    Here, the second line holds by homogeneity of the entropy shown in Auxiliary Result \ref{aux:homoent}. Dividing by $e^{-\lambda m_i}$ thus justifies the main inequality $(\star)$. Now, the Herbst Argument of Lemma \ref{lem:herbstargument} is applicable and for all $\lambda > 0$ the following bound on the moment generating function of $f$ holds
    \begin{align*}
        \E[\exp(\lambda (f(X)-\E[f(X)]))] 
        \leq \exp\Parent{\frac{\lambda^2 \kappa \twonorm{c}^2/4}{2}}. 
    \end{align*}

    Accordingly, a standard Chernoff bound and the choice $\lambda = \frac{4t}{\kappa\twonorm{c}^2}$ recover the one-sided statement: 
    \begin{align*}
        \P\Parent{f(X) - \E[f(X)] \geq t}
        &\leq \exp(-\lambda t) \cdot \E[\exp\Parent{\lambda(f(X) - \E[f(X)])}] \\
        &\leq \exp(-\lambda t) \exp\Parent{\frac{\lambda^2 \kappa\twonorm{c}^2/4}{2}}
        = \exp\Parent{-\frac{t^2}{\frac{\kappa}{2} \twonorm{c}^2}}.
    \end{align*}

    The two-sided bound follows by noting that $-f$ also fulfills the bounded difference property. 
\end{proof}

\subsection{Hoeffding's Lemma and Herbst Argument}
\label{sec:herbstargument}

In this subsection we focus on the Herbst Argument at the core of the McDiarmid's proof in the previous section and show how its main condition can be established using Hoeffding's Lemma. 

\begin{lemma}{Herbst Argument (Proposition 2.14, \cite{massart2003})}
    \label{lem:herbstargument}

    Let $X \sim \nu$ where $\nu$ is a probability measure on $\R$. Suppose there exists $v \in (0, \infty)$ \st{} for all $\lambda > 0$ it holds that $\ent[\nu]{\exp(\lambda X)} \leq \frac{\lambda^2v}{2} \E[\exp(\lambda X)]$. Then, for every $\lambda > 0$ we have
    \begin{align*}
        \E[\exp(\lambda (X-\E[X)])] 
        \leq \exp\Parent{\frac{\lambda^2v}{2}}. 
    \end{align*}
\end{lemma}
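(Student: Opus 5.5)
The plan is the classical Herbst argument: turn the entropy inequality into a differential inequality for the log-moment generating function and integrate it. Set $F(\lambda) := \E[\exp(\lambda X)]$ for $\lambda > 0$. We implicitly assume, as the paper does throughout, that $F$ is finite and that differentiation under the integral sign is allowed. Then $F'(\lambda) = \E[X e^{\lambda X}]$. Expanding the definition of entropy with $f = e^{\lambda X}$ gives
\begin{align*}
    \ent[\nu]{\exp(\lambda X)} = \lambda F'(\lambda) - F(\lambda)\log F(\lambda).
\end{align*}
The hypothesis then reads $\lambda F'(\lambda) - F(\lambda)\log F(\lambda) \leq \frac{\lambda^2 v}{2} F(\lambda)$.

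Next, introduce $G(\lambda) := \frac{1}{\lambda}\log F(\lambda)$. Differentiating gives
\begin{align*}
    G'(\lambda) = \frac{F'(\lambda)}{\lambda F(\lambda)} - \frac{\log F(\lambda)}{\lambda^2} = \frac{\lambda F'(\lambda) - F(\lambda)\log F(\lambda)}{\lambda^2 F(\lambda)}.
\end{align*}
After dividing the inequality above by $\lambda^2 F(\lambda)$, which is positive, we obtain $G'(\lambda) \leq v/2$ for all $\lambda > 0$.

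The step that needs care is the boundary behaviour as $\lambda \downarrow 0$. Since $F(0) = 1$ and $F'(0) = \E[X]$, L'Hôpital's rule (equivalently, a first-order expansion of $\log F$) gives $\lim_{\lambda \downarrow 0} G(\lambda) = \E[X]$. This requires $X$ to be integrable with $F$ differentiable at $0^+$, which follows from finiteness of the MGF in a neighbourhood, or from dominated convergence. Integrating $G' \leq v/2$ from $0^+$ to $\lambda$ yields
\begin{align*}
    G(\lambda) \leq \E[X] + \frac{\lambda v}{2},
\end{align*}
that is, $\log F(\lambda) \leq \lambda \E[X] + \frac{\lambda^2 v}{2}$.

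Exponentiating and multiplying by $e^{-\lambda \E[X]}$ gives $\E[\exp(\lambda(X - \E[X]))] \leq \exp(\lambda^2 v/2)$, which is the claim. The only real obstacle is justifying the regularity: differentiability of $F$ and the limit at $0$. I would handle this by assuming, as is implicit in the statement, that $F(\lambda) < \infty$ for all $\lambda > 0$. Then $F$ is smooth on $(0,\infty)$ and right-continuous at $0$ with the stated right derivative, by convexity of $\lambda \mapsto e^{\lambda x}$ and monotone convergence.
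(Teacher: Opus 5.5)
Your proof is correct and follows the standard route. The paper gives no proof of its own here and simply cites Proposition 2.14 of \cite{massart2003}, whose argument is the one you give: show $\frac{d}{d\lambda}\bigl(\lambda^{-1}\log \E[e^{\lambda X}]\bigr) \leq v/2$, then integrate from $0^+$ using $\lim_{\lambda \downarrow 0}\lambda^{-1}\log\E[e^{\lambda X}] = \E[X]$, which needs the integrability of $X$ and finiteness of the moment generating function that the statement implicitly assumes.
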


The Herbst Argument translates control of the entropy of test functions $\phi(x) := \exp(\lambda x)$ for all $\lambda > 0$ into sub-Gaussian control on a random variable's moment generating function. For the proof of McDiarmid's the argument's condition can be established using Hoeffding's Lemma. 

\begin{lemma}{Hoeffding's Lemma (Lemma 2.2, \cite{boucheron:lugosi:massart2013})}
    \label{lem:hoeffdings}

    Let $X \sim \nu$ have zero mean and $X \in [a,b]$, $\nu$-almost surely. Then, $X$ is $\sigma^2$-sub-Gaussian with $\sigma^2 := \frac{(b-a)^2}{4}$ and for the cumulant generating function $\psi(\lambda) := \log(\E[\exp(\lambda X)])$ we have $\psi^{\prime\prime}(\lambda) \leq \sigma^2$. 
\end{lemma}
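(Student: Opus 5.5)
The plan is to prove Hoeffding's Lemma (Lemma \ref{lem:hoeffdings}) through the cumulant generating function $\psi(\lambda) = \log \E[\exp(\lambda X)]$. We first bound $\psi''$ uniformly and then integrate twice. Since $X \in [a,b]$ almost surely, $\E[\exp(\lambda X)]$ is finite and smooth in $\lambda$ for every $\lambda \in \R$, and we may differentiate under the expectation. This is justified by dominated convergence, because all derivatives are bounded by $\max(|a|,|b|)^k e^{|\lambda|\max(|a|,|b|)}$.

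The first step introduces the exponentially tilted measure
\begin{align*}
d\nu_\lambda(x) := \frac{e^{\lambda x}}{\E[e^{\lambda X}]}\, d\nu(x).
\end{align*}
A direct computation gives $\psi'(\lambda) = \E_{\nu_\lambda}[X]$ and
\begin{align*}
\psi''(\lambda) = \E_{\nu_\lambda}[X^2] - \E_{\nu_\lambda}[X]^2 = \var[\nu_\lambda]{X}.
\end{align*}
So $\psi''(\lambda)$ is the variance of a random variable $Y \sim \nu_\lambda$. This $Y$ is again supported in $[a,b]$, because $\nu_\lambda$ is absolutely continuous with respect to $\nu$.

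The key step is the elementary variance bound for bounded variables. For any $Y \in [a,b]$,
\begin{align*}
\var{Y} \leq \E\big[(Y - \tfrac{a+b}{2})^2\big] \leq \tfrac{(b-a)^2}{4},
\end{align*}
since the variance minimizes $c \mapsto \E[(Y-c)^2]$ and $|Y - \frac{a+b}{2}| \leq \frac{b-a}{2}$. This gives $\psi''(\lambda) \leq \sigma^2$ for all $\lambda$, which is the second claim.

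For sub-Gaussianity, note that $\psi(0) = 0$ and $\psi'(0) = \E[X] = 0$ by the zero-mean assumption. Taylor's theorem with Lagrange remainder then gives, for some $\xi$ between $0$ and $\lambda$,
\begin{align*}
\psi(\lambda) = \tfrac{\lambda^2}{2}\psi''(\xi) \leq \tfrac{\lambda^2 \sigma^2}{2}.
\end{align*}
This is exactly $\E[\exp(\lambda X)] \leq \exp(\lambda^2\sigma^2/2)$ for all $\lambda \in \R$, that is, $X$ is $\sigma^2$-sub-Gaussian.

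I expect no real obstacle. The only points needing care are the justification of differentiating under the integral and the observation that the tilted law keeps the support inside $[a,b]$; boundedness handles both.
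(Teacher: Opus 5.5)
Your proof is correct and matches the standard argument behind the cited Lemma 2.2 of Boucheron, Lugosi and Massart; the paper itself only states the lemma and gives no proof. As there, you identify $\psi''(\lambda)$ as the variance of $X$ under the exponentially tilted law, bound it by $(b-a)^2/4$ using $|X-\frac{a+b}{2}|\le\frac{b-a}{2}$, and conclude by a second-order Taylor expansion with $\psi(0)=\psi'(0)=0$.
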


Hoeffding's Lemma has Hoeffding's inequality as an immediate consequence (See \cite[Subsection 2.6]{boucheron:lugosi:massart2013}). The following corollary of Hoeffding's Lemma reformulates it to be of the form needed to apply the Herbst Argument. While this route would be overly complicated to establish Hoeffding's inequality, it is needed to exploit the conditional boundedness provided by the bounded difference property assumed by McDiarmid's. 

\begin{corollary}{Hoeffding's Lemma for Herbst Argument (Page 166, \cite{boucheron:lugosi:massart2013})}
    \label{cor:hoeffdingsforherbst}

    Let $X \sim \nu$ have zero mean and $X \in [a,b]$, $\nu$-almost surely. Then, we have 
    \begin{align*}
        \ent[\nu]{\exp(\lambda X)}
        \leq \frac{(b-a)^2\lambda^2}{8} \E\Bracket{\exp(\lambda X)}. 
    \end{align*}
    
\end{corollary}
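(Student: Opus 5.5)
The plan is to write the entropy of $\exp(\lambda X)$ in terms of the cumulant generating function $\psi(\lambda) := \log \E[\exp(\lambda X)]$, and then apply the curvature bound $\psi'' \leq (b-a)^2/4$ from Lemma \ref{lem:hoeffdings}.

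First I will compute the entropy directly from the definition, writing $F(\lambda) := \E[\exp(\lambda X)] = e^{\psi(\lambda)}$. Differentiating under the expectation, which is justified since $X$ is bounded, gives $\E[\lambda X e^{\lambda X}] = \lambda F'(\lambda) = \lambda \psi'(\lambda) F(\lambda)$. Hence
\begin{align*}
    \ent[\nu]{\exp(\lambda X)} = \lambda F'(\lambda) - F(\lambda)\log F(\lambda) = F(\lambda)\Parent{\lambda\psi'(\lambda) - \psi(\lambda)}.
\end{align*}
It therefore suffices to show $\lambda\psi'(\lambda) - \psi(\lambda) \leq \frac{(b-a)^2\lambda^2}{8}$ for $\lambda > 0$. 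The case $\lambda < 0$ follows by applying the result to $-X$, and $\lambda = 0$ is trivial.

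Second, I will express $\lambda\psi'(\lambda) - \psi(\lambda)$ as an integral of $\psi''$. Since $\psi(0) = 0$, define $h(\lambda) := \lambda\psi'(\lambda) - \psi(\lambda)$. Then $h(0) = 0$ and $h'(\lambda) = \lambda\psi''(\lambda)$, so
\begin{align*}
    h(\lambda) = \int_0^\lambda s\,\psi''(s)\,ds \leq \frac{(b-a)^2}{4}\int_0^\lambda s\,ds = \frac{(b-a)^2\lambda^2}{8}.
\end{align*}
The inequality uses $\psi''(s) \leq (b-a)^2/4$ for every $s$, as given by Lemma \ref{lem:hoeffdings}. Multiplying back by $F(\lambda)$ gives the claim.

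The only delicate point is that Lemma \ref{lem:hoeffdings} must provide the bound $\psi''(s) \leq (b-a)^2/4$ for every $s$, not just at $0$. This holds because $\psi''(s)$ is the variance of $X$ under the exponentially tilted measure $d\nu_s \propto e^{sX}d\nu$. That tilted measure is still supported on $[a,b]$, and any random variable supported on $[a,b]$ has variance at most $(b-a)^2/4$, since its variance is at most $\E[(X-\tfrac{a+b}{2})^2]$. The zero-mean assumption is not actually needed for the entropy bound. Differentiability of $\psi$ follows from boundedness of $X$.
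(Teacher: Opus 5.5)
Your proposal is correct and follows essentially the same route as the paper: both rewrite $\ent[\nu]{\exp(\lambda X)}$ as $\E[\exp(\lambda X)]\,(\lambda\psi'(\lambda)-\psi(\lambda))$, express the bracket as $\int_0^\lambda \theta\psi''(\theta)\,d\theta$ using that it vanishes at $0$ with derivative $\lambda\psi''(\lambda)$, and bound $\psi''\le (b-a)^2/4$ via Hoeffding's Lemma. Your extra remarks are correct additions that the paper leaves implicit: the tilted-variance justification that this bound holds for every $s$, the $\lambda<0$ case, and the observation that the zero-mean assumption is unnecessary.
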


\begin{proof}
    In the argument to follow, we show the equality below, where the subsequent inequality is a direct application of Hoeffding's Lemma \ref{lem:hoeffdings} stating that $\psi^{\prime\prime}(\lambda) \leq \frac{(b-a)^2}{4}$: 
    \begin{align*}
        \lambda \psi^\prime(\lambda) - \psi(\lambda)
        \overset{(\star)}= \int_0^\lambda \theta \psi^{\prime\prime}(\theta) d\theta
        \leq \frac{(b-a)^2\lambda^2}{8}. 
    \end{align*}

    Using the definition of $\psi^\prime(\lambda)$ expanding the \lhs{} above yields
    \begin{align*}
        \lambda \psi^\prime(\lambda) - \psi(\lambda)
        &= \lambda \Parent{\log(\E[\exp(\lambda X))]}^\prime - \log(\E[\exp(\lambda X)])
        = \frac{\E[\lambda X \exp(\lambda X)]}{\E[\exp(\lambda X)]} - \log(\E[\exp(\lambda X)]) \\
        &= \frac{1}{\E[\exp(\lambda X)]} \underbrace{\Parent{\E[\log(\exp(\lambda X)) \exp(\lambda X)] - \E[\exp(\lambda X)] \log(\E[\exp(\lambda X)])}}_{\ent[\nu]{\exp(\lambda X)}}.
    \end{align*}

    Using this form of the \lhs{} recovers the statement. Hence, it only remains to show that $(\star)$ is indeed valid. To do so, we define $F(\lambda) := \lambda \psi^\prime (\lambda) - \psi(\lambda)$. Now, 
    \begin{align*}
        F^\prime(\lambda) 
        = \frac{d}{d\lambda} (\lambda \psi^\prime(\lambda)) - \frac{d}{d\lambda} \psi(\lambda)
        = \psi^\prime(\lambda) + \lambda \psi^{\prime\prime}(\lambda) - \psi^\prime(\lambda)
        = \lambda \psi^{\prime\prime}(\lambda). 
    \end{align*}

    The Fundamental Theorem of Calculus and the fact that $F(0) = 0$ as $\psi(0) = \log(\E[\exp(0 X)]) = 0$ now recover the missing equality $(\star)$: 
    \begin{align*}
        \int_0^\lambda \theta \psi^{\prime\prime}(\theta) d\theta
        = F(\lambda) - F(0) 
        = \lambda \psi^\prime(\lambda) - \psi(\lambda). 
    \end{align*}
\end{proof}

\section{Approximate Tensorization of Entropy}
\label{sec:ATE}

In this section we first prove ATE for well-conditioned Gaussians and afterwards discuss a generalization and tightening of this result due to \cite{ascolani:lavenant:zanella2026}. 

Besides its importance for McDiarmid's, ATE is particularly central in the study of Gibbs samplers in Markov chain mixing (See e.g., \cite{caputo:menz:tetali2015, caputo2022, blanca:caputo:chen:parisi:stefankovic2022, chen:eldan2022, anari:jain:koehler:pham:vuong2024, anari:koehler:vuong2024}), as it implies that $O(\kappa n \log(1/\varepsilon))$ steps suffice to reduce the total variation distance and Kullback-Leibler divergence between the sample and stationary measure below $\varepsilon > 0$ (\cite[Corollary 3.5]{ascolani:lavenant:zanella2026}). 

\subsection{ATE for Gaussians via Stochastic Localization}

Stochastic localization is a recent tool in the study of high-dimensional probability measures that was first introduced by \cite{eldan2013}. Thereafter, it led to multiple breakthroughs towards proving the KLS conjecture posed in \cite{kannan:lovasz:simonovits1995} (\cite{lee:vempala2017, chen2021, klartag:lehec2022, jambulapati:lee:yin:vempala2022, klartag2023}). Moreover, it proved to be a versatile tool in Markov chain mixing (\cite{chen:eldan2022, alaoui:montanari:sellke2022, anari:koehler:vuong2024, huang:montanari:pham2024}). For detailed overviews on stochastic localization and connections to related works, see the recent surveys by \cite{montanari2023, shi:tian:zhang2025}. 

We use stochastic localization to establish ATE for Gaussians $\mathcal N(\mu, \Sigma)$ on $\R^n$ with constant proportional to the condition number $\kappa(\Sigma)$. We present this proof both to provide an independent derivation of ATE for the relevant case of multivariate Gaussians and because it is the simplest application of stochastic localization known to the authors. It therefore yields insights into the technique in a simple setting where intuition can also be gained through more elementary perspectives. 

\subsubsection{The Stochastic Localization Process}

The central object in stochastic localization is the following stochastic localization process. 

\begin{definition}{Stochastic Localization Process}
    \label{def:stoclocproc}

    Let $\nu$ be a probability measure on $\R^n$ and $B_t$ a standard Brownian motion in $\R^n$ adapted to a filtration $(\mathcal F_t)_{t\geq0}$. Let $(C_t)_{t \geq 0}$ be an $\mathcal F_t$-measurable process where $C_t \in \R^{n\times n}$ and $C_t \succ 0$. Then, we call the following measure-valued stochastic process a stochastic localization process
    \begin{align*}
        d\nu_t(x) \propto F_t(x)d\nu(x),
    \end{align*}

    where $F_t$ solves the SDEs $F_0(x) = 1$ and $dF_t(x) = F_t(x)(x-\int_{\R^n} yd\nu_t(y))\T C_t dB_t$ for all $x \in \R^n$. 
\end{definition}

The behavior of the stochastic localization process is controlled by the process for the driving matrices $C_t \in \R^{n \times n}$. When restricting the process to a constant driving matrix $C_t = C \in \R^{n\times n}$, it's density \wrt{} to the measure of interest $\nu$ is given in the following result. 

\begin{theorem}{Explicit Construction of $\nu_t$ (Theorem 2, \cite{alaoui:montanari2022})}
    \label{thm:stoclocproc}
    
    Let $X_0 \sim \nu_0$, $C \in \R^n, C \succ 0$, $W_t$ be a standard Brownian motion and define  $y_t = tX_0 + C\inv W_t$. Then, there exists a standard Brownian motion $B_t$ adapted to the Filtration generated by $y_t$ \st{} the following Radon-Nikodym derivative is a solution $F_t(x)$ for the SDE Definition \ref{def:stoclocproc}: 
    \begin{align*}
        \frac{d\nu_t(x)}{d\nu}
        \propto \exp\Parent{-\frac{t}{2} x\T C^2 x + y_t \T C^2 x}. 
    \end{align*}
\end{theorem}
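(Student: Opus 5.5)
The plan is a direct Itô computation: show that the normalized exponential tilt satisfies the SDE of Definition \ref{def:stoclocproc} once $B_t$ is taken to be the innovation process of the observation $y_t$. I assume throughout that $C$ is symmetric positive definite, as $C \succ 0$ suggests, and write $X_0 \sim \nu$ with $\nu_0 = \nu$.

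First set
\begin{align*}
G_t(x) := \exp\Parent{-\tfrac t2 x\T C^2 x + y_t\T C^2 x}, \quad Z_t := \int G_t\, d\nu, \quad F_t := G_t/Z_t, \quad m_t := \int x\, d\nu_t(x),
\end{align*}
where $d\nu_t = F_t\, d\nu$.

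\textbf{Step 1: $\nu_t$ is the posterior.} The observation satisfies $dy_t = X_0\,dt + C\inv dW_t$. By Girsanov, conditionally on $X_0 = x$, the likelihood ratio of the path $(y_s)_{s\le t}$ against the law of $C\inv W$ is $\exp\Parent{\int_0^t x\T C^2 dy_s - \tfrac t2 x\T C^2 x} = G_t(x)$. Bayes' rule therefore identifies $\nu_t$ as the conditional law of $X_0$ given $\mathcal F^y_t$, and hence $m_t = \E[X_0 \mid \mathcal F^y_t]$.

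\textbf{Step 2: the innovation process.} Define
\begin{align*}
B_t := C\Parent{y_t - \int_0^t m_s\, ds} = W_t + C\int_0^t (X_0 - m_s)\,ds.
\end{align*}
This process is adapted to $\mathcal F^y_t$. Using the tower property and $m_s = \E[X_0\mid\mathcal F^y_s]$, it is an $\mathcal F^y$-martingale, and its quadratic variation is $tI_n$ since the drift term has finite variation. Lévy's characterization then shows that $B_t$ is a standard Brownian motion. Equivalently, $dy_t = m_t\,dt + C\inv dB_t$ and $d\langle y\rangle_t = C^{-2}dt$.

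\textbf{Step 3: Itô computation.} Applying Itô's formula to $G_t(x)$, the term $\tfrac12 x\T C^2\, d\langle y\rangle_t\, C^2 x = \tfrac12 x\T C^2 x\, dt$ cancels the $-\tfrac12 x\T C^2 x\,dt$ drift. This leaves
\begin{align*}
dG_t(x) = G_t(x)\, x\T C^2 dy_t.
\end{align*}
Integrating against $\nu$, after justifying the exchange of the stochastic differential with $\int d\nu$ (a stochastic Fubini argument, using Gaussian tails of $G_t$ in $x$ locally in time), gives $dZ_t = Z_t\, m_t\T C^2 dy_t$. Itô's formula for $1/Z_t$ then gives
\begin{align*}
d(Z_t\inv) = Z_t\inv\Parent{-m_t\T C^2 dy_t + m_t\T C^2 m_t\, dt}.
\end{align*}
By the product rule, including the cross-variation term $-F_t\, x\T C^2 m_t\,dt$,
\begin{align*}
dF_t(x) = F_t(x)\,(x-m_t)\T C^2\Parent{dy_t - m_t\,dt} = F_t(x)\,(x-m_t)\T C\, dB_t.
\end{align*}
Together with $F_0 \equiv 1$, this is exactly the SDE of Definition \ref{def:stoclocproc} with $C_t \equiv C$.

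\textbf{Main obstacle.} The algebra is routine. The delicate point is Step 2, namely that $B_t$ is a Brownian motion with respect to the observation filtration; this is the innovation theorem of filtering theory. Secondarily, the interchange of $d$ and $\int d\nu$ in Step 3 needs integrability of $x\mapsto |x|^2 G_t(x)$ under $\nu$, which holds whenever $\nu$ has finite second moments. If Step 2 is to be avoided, an alternative is to define $B_t$ directly by the formula above and verify the martingale and quadratic-variation properties by hand.
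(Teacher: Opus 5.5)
Your argument is correct and matches the source. The paper gives no proof of its own; it imports the result from El Alaoui and Montanari, whose proof is this same filtering argument: identify $\nu_t$ as the posterior of $X_0$ given $\mathcal F^y_t$, take the innovation process $B_t = C\bigl(y_t - \int_0^t m_s\,ds\bigr)$ as the $\mathcal F^y$-Brownian motion, and apply It\^o's formula to the normalized tilt $G_t/Z_t$. The hypotheses you add (symmetric $C$ and a moment condition on $\nu$ for the innovation and Fubini steps), together with independence of $W$ and $X_0$, are implicit in the statement as written.
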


\subsubsection{Entropy Conservation under Entropic Stability}

The idea behind entropic stability is that the change in entropy over the evolution of a stochastic localization process can be tracked if the entropy is roughly stable over time $t \in [0, T]$. This will help us establish ATE for well-conditioned Gaussians, since with the right choice of the driving matrix $C$ we will be able to transform them into isotropic Gaussians for which 1-ATE holds. The following lemma thereby formalizes how entropy is conserved along the stochastic localization process. 

\begin{lemma}{Approximate Entropy Conservation (Proposition 39, \cite{chen:eldan2022})}
    \label{lem:entropyconservation}

    Let $\nu_t$ be a stochastic localization process as in Definition \ref{def:stoclocproc} with driving matrix $C_t$. Fix $T > 0$ and suppose that for all $t \in [0,T]$ the $\nu_t$ are $\epsilon_t$-entropically stable \wrt{} $\psi(x,y) := \frac 1 2 \twonorm{C_t(x-y)}^2$. Then, $\nu_t$ fulfills the following approximate entropy conservation
    \begin{align*}
        \ent[\nu]{f}
        \leq \exp\Parent{\int_0^T \epsilon_t dt} \cdot \E[\ent[\nu_T]{f}], 
        \quad \text{for all $f : \R^n \to [0,\infty)$}.
    \end{align*}
\end{lemma}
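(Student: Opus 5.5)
The plan is to track the expected entropy $\E[\ent[\nu_t]{f}]$ along the stochastic localization process with Itô calculus, and then close the argument with Grönwall's inequality. I will use the standard notion of entropic stability (as in \cite{chen:eldan2022}). A measure $\mu$ is $\epsilon$-entropically stable \wrt{} $\psi$ if, for every nonnegative $f$ with $\int f d\mu>0$, writing $\mu^f := f\mu/\int f d\mu$,
\begin{align*}
    \psi\Parent{\textstyle\int x\, d\mu^f(x), \int x\, d\mu(x)} \leq \epsilon \cdot \frac{\ent[\mu]{f}}{\int f d\mu}.
\end{align*}
Here $\ent[\mu]{f}/\int f d\mu = \DKL{\mu^f}{\mu}$. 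Throughout I treat $F_t$ as the normalized density $d\nu_t/d\nu$. This is consistent with Definition \ref{def:stoclocproc}: $\int F_t d\nu$ has differential $\int F_t (x-a_t)\T C_t dB_t\, d\nu = 0$, where $a_t := \int y\, d\nu_t(y)$, so the normalization stays equal to one.

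\textbf{Step 1: martingale structure.} For each fixed $x$, $F_t(x)$ is a (local) martingale. Hence both
\begin{align*}
    M_t := \int f\, d\nu_t = \int f F_t\, d\nu \quad\text{and}\quad N_t := \int f\log f\, d\nu_t
\end{align*}
are martingales, with $M_0 = \int f d\nu$. Let $b_t := \int x f(x)\, d\nu_t(x)/M_t$ be the barycenter of the tilted measure $\nu_t^f$. Then
\begin{align*}
    dM_t = \int f(x)F_t(x)(x-a_t)\T C_t dB_t\, d\nu(x) = M_t (b_t-a_t)\T C_t\, dB_t,
\end{align*}
and therefore $d[M]_t = M_t^2\twonorm{C_t(b_t-a_t)}^2 dt$.

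\textbf{Step 2: Itô's formula for the entropy.} Write $\ent[\nu_t]{f} = N_t - M_t\log M_t$. Applying Itô's formula to $m\mapsto m\log m$ gives
\begin{align*}
    d(M_t\log M_t) = (\log M_t + 1)\,dM_t + \frac{d[M]_t}{2M_t}.
\end{align*}
Taking expectations and discarding the martingale parts yields
\begin{align*}
    \frac{d}{dt}\E[\ent[\nu_t]{f}] = -\E\Bracket{\tfrac{1}{2} M_t\twonorm{C_t(b_t-a_t)}^2}.
\end{align*}

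\textbf{Step 3: entropic stability and Grönwall.} With $\psi(x,y) = \frac12\twonorm{C_t(x-y)}^2$, $\epsilon_t$-entropic stability of $\nu_t$ applied to $f$ gives
\begin{align*}
    \tfrac12 M_t\twonorm{C_t(b_t-a_t)}^2 \leq \epsilon_t\,\ent[\nu_t]{f}.
\end{align*}
Taking expectations, $\frac{d}{dt}\E[\ent[\nu_t]{f}] \geq -\epsilon_t\,\E[\ent[\nu_t]{f}]$. Grönwall's inequality on $[0,T]$ then yields
\begin{align*}
    \E[\ent[\nu_T]{f}] \geq \exp\Parent{-\int_0^T\epsilon_t dt}\,\ent[\nu]{f},
\end{align*}
which is the claim after rearranging.

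\textbf{Main obstacle.} The delicate part is Step 2: justifying that the local martingales are true martingales and that the expectation can be exchanged with differentiation. I would first prove the result for bounded $f$ bounded away from zero, so that $M_t$ is bounded, $\log M_t$ is controlled, and the stochastic integrals are square integrable. I would also localize with stopping times and work with the integrated form, via Fubini and optional stopping, rather than differentiating. The general $f$ then follows by truncating $f\mapsto \min(\max(f,\delta),K)$ and passing to the limit by monotone and dominated convergence in both entropies.
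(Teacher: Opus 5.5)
Your Itô computation in Steps 1--2 and the Grönwall step are correct. This is the standard argument behind \cite[Proposition 39]{chen:eldan2022}, which the paper only cites and does not prove. The gap is in Step 3, where you strengthen the hypothesis without saying so. In this paper, $\epsilon$-entropic stability (Definition \ref{def:entropicstability}) asserts the barycenter inequality only for exponential tilts $\mathcal T_v\nu_t$. That is also all Lemma \ref{lem:entstabfromcovbound} provides when the lemma is applied in Theorem \ref{thm:ATEgauss}. The measure $\nu_t^f = f\nu_t/M_t$ is not an exponential tilt for general $f$. So $\frac12 M_t\twonorm{C_t(b_t-a_t)}^2\le\epsilon_t\ent[\nu_t]{f}$ does not follow from the hypothesis as stated.

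The missing ingredient is that tilts minimize relative entropy under a barycenter constraint. Let $\Lambda_t(v):=\log\int e^{v\T x}\,d\nu_t(x)$ and suppose $\int x\,d\mathcal T_v\nu_t(x)=b_t$. Then
\begin{align*}
    \DKL{\nu_t^f}{\nu_t}
    = \DKL{\nu_t^f}{\mathcal T_v\nu_t} + v\T b_t - \Lambda_t(v)
    = \DKL{\nu_t^f}{\mathcal T_v\nu_t} + \DKL{\mathcal T_v\nu_t}{\nu_t}
    \geq \DKL{\mathcal T_v\nu_t}{\nu_t}.
\end{align*}
Hence $\psi(b_t,a_t)\le\epsilon_t\DKL{\mathcal T_v\nu_t}{\nu_t}\le\epsilon_t\DKL{\nu_t^f}{\nu_t}$, which is exactly your Step 3.

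You also need such a $v$ to exist.
\begin{itemize}
    \item In the Gaussian setting of Theorem \ref{thm:ATEgauss}, every $\nu_t$ is some $\mathcal N(m_t,\Gamma_t)$. Then $\int x\,d\mathcal T_v\nu_t(x) = m_t+\Gamma_t v$ ranges over all of $\R^n$, so existence is immediate.
    \item In general, for $t>0$ the density $d\nu_t/d\nu$ contains the factor $\exp(-\frac12 x\T(\int_0^t C_s^2\,ds)x)$. So $\Lambda_t$ is finite on $\R^n$, and $\nabla\Lambda_t$ maps onto the relative interior of the convex hull of $\operatorname{supp}\nu$.
    \item If $b_t$ lies on the boundary, apply the argument to $(1-\eta)\nu_t^f+\eta\nu_t$. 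Its barycenter is interior, and convexity gives $\DKL{(1-\eta)\nu_t^f+\eta\nu_t}{\nu_t}\le(1-\eta)\DKL{\nu_t^f}{\nu_t}$. Then let $\eta\to0$, using continuity of $\psi$.
\end{itemize}
With this step added, together with the truncation and localization you already outline, your proof goes through.
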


Here, entropic stability is formalized in Definition \ref{def:entropicstability} below. 

\begin{definition}{Entropic Stability (Definition 29, \cite{chen:eldan2022})}
    \label{def:entropicstability}

    Let $\nu$ be a probability measure on $\R^n$ and $\epsilon > 0$. For all $v \in \R^n$ let $\mathcal T_v\nu$ be its exponential tilt defined through $\frac{d\mathcal T_v \nu(x)}{d\nu} \propto \exp(v\T x)$. The measure $\nu$ fulfills $\epsilon$-entropic stability \wrt{} $\psi : \R^n \times \R^n \to (0, \infty)$ if 
    \begin{align*}
        \psi\Parent{\int_{\R^n} x d\mathcal T_v\nu(x), \int_{\R^n} x d\nu(x)}
        \leq \epsilon \operatorname{D}_{KL}(\mathcal T_v \nu || \nu),
        \quad \text{for all} \quad 
        v \in \R^n. 
    \end{align*}
    
\end{definition}

The question now arises how entropic stability can be established. Luckily, for the entropic stability \wrt{} $\psi(x,y) = \frac 1 2 \twonorm{C_t(x-y)}^2$ as required by Lemma \ref{lem:entropyconservation} this can be done via covariance bounds. 

\begin{lemma}{Entropic Stability from Covariance Bounds (Lemma 40, \cite{chen:eldan2022})}
    \label{lem:entstabfromcovbound}

    Let $\nu$ be a measure on $\R^n$ and $C,A \in R^{n \times n}$ fulfill $A,C \succ 0$. Suppose that $\cov[\mathcal T_v \nu]{X} \preceq A$ for all $v \in \R^n$. Then, $\nu$ is $\opnorm{CAC}$-entropically stable \wrt{} $\psi(x,y) = \frac 1 2 \twonorm{C(x-y)}^2$. 
\end{lemma}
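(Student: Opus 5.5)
We need to prove Lemma \ref{lem:entstabfromcovbound}: if $\cov[\mathcal T_v\nu]{X}\preceq A$ for all $v$, then $\frac12\twonorm{C(m(v)-m(0))}^2\le \opnorm{CAC}\,\DKL{\mathcal T_v\nu}{\nu}$, where $m(v):=\int x\, d\mathcal T_v\nu(x)$.

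The plan is to work with the log-partition function $\Lambda(v):=\log\int e^{v\T x}d\nu(x)$, assuming it is finite near the relevant $v$ (otherwise the tilt is undefined). Two standard facts drive everything: $\nabla\Lambda(v)=m(v)$ and $\nabla^2\Lambda(v)=\cov[\mathcal T_v\nu]{X}\preceq A$. I also use the identity
\begin{align*}
\DKL{\mathcal T_v\nu}{\nu}=v\T m(v)-\Lambda(v)=\Lambda(0)-\Lambda(v)-\nabla\Lambda(v)\T(0-v),
\end{align*}
since $\Lambda(0)=0$. So the KL divergence is exactly the Bregman divergence of $\Lambda$ from $v$ to $0$. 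The statement then reduces to a co-coercivity-type inequality for a convex function with Hessian bounded by $A$.

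The key step is the following fact. For a convex $\Lambda$ with $\nabla^2\Lambda\preceq A$, with $A\succ0$,
\begin{align*}
D_\Lambda(0,v)\ge \tfrac12(\nabla\Lambda(0)-\nabla\Lambda(v))\T A\inv(\nabla\Lambda(0)-\nabla\Lambda(v)).
\end{align*}
I would prove it as in the Euclidean smooth-convex case. Set $g(u):=\Lambda(u)-\nabla\Lambda(v)\T u$. Then $g$ is convex and minimized at $u=v$, and it is $A$-smooth, so the quadratic upper bound $g(u+h)\le g(u)+\nabla g(u)\T h+\frac12 h\T Ah$ holds. Apply this at $u=0$ with $h=-A\inv\nabla g(0)$. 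This gives
\begin{align*}
g(v)\le g(0)-\tfrac12\nabla g(0)\T A\inv\nabla g(0).
\end{align*}
Rearranging yields the claim, with $\nabla g(0)=m(0)-m(v)$.

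Finally, write $d:=m(v)-m(0)$. Since $A\preceq\opnorm{CAC}\,C^{-2}$ (conjugate by $C\inv$), we get $A\inv\succeq \opnorm{CAC}\inv C^2$. Hence
\begin{align*}
\tfrac12\twonorm{Cd}^2=\tfrac12 d\T C^2 d\le \opnorm{CAC}\cdot\tfrac12 d\T A\inv d\le\opnorm{CAC}\,\DKL{\mathcal T_v\nu}{\nu},
\end{align*}
which is the desired $\epsilon$-entropic stability with $\epsilon=\opnorm{CAC}$.

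The main obstacle is the matrix-weighted co-coercivity step together with the technical justification of differentiating $\Lambda$. One subtlety is the step from the Hessian bound to the quadratic upper bound: it needs $\Lambda$ finite on the segment between $0$ and $h$. This holds if the covariance bound is assumed for all $v\in\R^n$, so that $\Lambda$ is finite everywhere.
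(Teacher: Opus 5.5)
Your proof is correct and complete. The paper does not prove this lemma itself: it is quoted from \cite{chen:eldan2022} and used only as a black box in the proof of Theorem~\ref{thm:ATEgauss}. So there is no internal argument to compare against, and your write-up supplies a self-contained justification.

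Your two ingredients are the right ones:
\begin{itemize}
\item the identity $\DKL{\mathcal T_v\nu}{\nu} = \Lambda(0)-\Lambda(v)+\nabla\Lambda(v)\T v$, which shows the KL divergence of a tilt is a Bregman divergence of the log-Laplace transform;
\item the co-coercivity bound for a convex $\Lambda$ with $\nabla^2\Lambda\preceq A$. You derive it correctly by noting that $g(u)=\Lambda(u)-\nabla\Lambda(v)\T u$ is minimized at $u=v$, and applying the quadratic upper bound at the step $h=-A\inv\nabla g(0)$.
\end{itemize}

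A useful feature of your route is the sharper, $C$-free intermediate inequality $\tfrac12 (m(v)-m(0))\T A\inv (m(v)-m(0)) \le \DKL{\mathcal T_v\nu}{\nu}$. From it, the $\opnorm{CAC}$ constant follows for every $C\succ 0$ at once via $A\inv \succeq \opnorm{CAC}\inv C^2$. In the paper's application, $A=\alpha_t\inv I_n$, and this gives exactly $\epsilon_t=\opnorm{C^2}/\alpha_t$.

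You also correctly observe that the hypothesis, imposed for every $v\in\R^n$, forces $\Lambda$ to be finite on all of $\R^n$. That is what justifies differentiating under the integral and using the Taylor remainder along the segment from $0$ to $h$.
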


It is now only an exercise to prove that the covariance of strongly log-concave measures is bounded. 

\begin{corollary}{Covariance Bound for Tilts of Strongly Log-Concave Measures}
    \label{cor:covboundlogconc}

    Let $\nu$ on $\R^n$ be $\alpha$-strongly log-concave. Then, for all $v \in \R^n$ the $\mathcal T_v \nu(x) \propto \exp(v\T x) d\nu(x)$ fulfill
    \begin{align*}
        \cov[\mathcal T_v \nu]{X} 
        \preceq 1/\alpha \cdot I_n. 
    \end{align*}
\end{corollary}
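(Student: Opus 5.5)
The plan is to reduce the claim to the Brascamp--Lieb inequality after observing that exponential tilts preserve strong log-concavity. Write $\nu(x) \propto \exp(-V(x))$ with $\nabla^2 V \succeq \alpha I_n$; the paper's notation writes $\nabla V$, but the Hessian is clearly intended. For fixed $v \in \R^n$, the tilt $\mathcal T_v \nu$ has density proportional to $\exp(-V_v(x))$ with $V_v(x) := V(x) - v\T x$. The tilt is a well-defined probability measure, since $\int \exp(v\T x) d\nu(x) < \infty$: strong log-concavity gives $V(x) \geq V(0) + \nabla V(0)\T x + \frac{\alpha}{2}\twonorm{x}^2$, so the integrand is dominated by a Gaussian. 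Because the tilt is linear, $\nabla^2 V_v = \nabla^2 V \succeq \alpha I_n$, so $\mathcal T_v \nu$ is again $\alpha$-strongly log-concave. It therefore suffices to prove $\cov[\mu]{X} \preceq \frac{1}{\alpha} I_n$ for an arbitrary $\alpha$-strongly log-concave probability measure $\mu$.

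For this, I would fix a direction $u \in \R^n$ and apply the Brascamp--Lieb inequality to the linear test function $g(x) := u\T x$. This gives
\begin{align*}
    u\T \cov[\mu]{X} u = \var[\mu]{g(X)} \leq \E_\mu\Bracket{\nabla g\T (\nabla^2 V_v)\inv \nabla g} = \E_\mu\Bracket{u\T (\nabla^2 V_v(X))\inv u} \leq \frac{1}{\alpha}\twonorm{u}^2,
\end{align*}
where the final step uses that $\nabla^2 V_v \succeq \alpha I_n$ implies $(\nabla^2 V_v)\inv \preceq \frac 1\alpha I_n$. Since $u$ is arbitrary, this yields the Loewner bound. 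Alternatively, one can avoid Brascamp--Lieb and use the weaker but sufficient Poincaré inequality with constant $1/\alpha$ for $\alpha$-strongly log-concave measures, which follows from Bakry--Émery or from the log-Sobolev inequality in \cite{ledoux2001} by linearization. Applied to $g(x) = u\T x$ with $\twonorm{\nabla g}^2 = \twonorm{u}^2$, it gives exactly the same bound.

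The only mild obstacle is integrability: $g$ must be in $L^2(\mu)$ with finite moments so that the functional inequality applies. This follows from the Gaussian domination shown above. If $V$ is not $C^2$, I would state the curvature condition in the weak sense and approximate, but I would simply assume $V \in C^2$ as in the paper's definition.
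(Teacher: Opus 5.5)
Your proof is correct and has the same structure as the paper's: the tilt has potential $V(x) - v\T x$ with unchanged Hessian, so it is again $\alpha$-strongly log-concave, and a variance inequality applied to the linear test function $x \mapsto u\T x$ gives the Loewner bound. The paper uses the Poincar\'e inequality with constant $1/\alpha$ from Bakry--Emery, which is exactly your stated alternative; your primary Brascamp--Lieb route is a stronger inequality than needed but yields the same bound, and your Gaussian-domination integrability check is a detail the paper leaves implicit.
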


\begin{proof}

    As we assume $\nu$ has a density, by definition for all $v \in \R^n$ the exponential tilt is of the form 
    \begin{align*}
        \mathcal T_v \nu(x)
        \propto \exp(v\T x) \nu(x)
        \propto \exp(-V(x) + v\T x). 
    \end{align*}

    With $Z$ the normalizing constant, taking a logarithm and derivatives yields
    \begin{align*}
        \log(\mathcal T_v \nu(x)) &= - V(x) + v\T x + Z \\
        \nabla_x \log(\mathcal T_v \nu(x)) &= -\nabla_x V(x) + v \\
        \nabla_x^2 \log(\mathcal T_v \nu(x)) &= -\nabla_x^2 V(x)
    \end{align*}

    Now, we have $-\nabla_x^2 \log(\mathcal T_v \nu(x)) = \nabla_x^2 V(x) \succeq \alpha I_n$ and therefore the exponential tilt is also strongly log-concave. By the Bakry-Emery criterion in Theorem \ref{thm:bakryemery} we then have
    \begin{align*}
        \ent[\mathcal T_v \nu]{f^2}
        \leq \frac 2 \alpha \cdot \E[\twonorm{\nabla f(X)}^2],
        \quad \text{and} \quad 
        \var[\mathcal T_v \nu]{f}
        \leq \frac 1 \alpha \cdot \E[\twonorm{\nabla f(X)}^2]. 
    \end{align*}

    Choosing $f(x) = \theta\T x$ for any $\theta \in \R^n$ we obtain $\cov[\mathcal T_v \nu]{X} \preceq 1/\alpha \cdot I_n$.
\end{proof}

\subsubsection{ATE from Entropy Conservation}

It remains to connect ATE to entropy conservation. For this, we implement the idea that we take a measure of interest, transform it into a product measure where 1-ATE holds and conserve entropy along this evolution. Following the proof of \cite[Lemma 54]{anari:koehler:vuong2024} we obtain: 

\begin{corollary}{ATE from Entropy Conservation}
    \label{cor:ATEfromCons}

    Let $\nu_t$ be a stochastic localization process as in Definition \ref{def:stoclocproc}. Suppose that $\nu_t$ fulfills $\gamma_T$-approximate entropy conservation as in Lemma \ref{lem:entropyconservation} over times $ t \in [0,T]$ and $\nu_T$ is a product measure. Then, 
    \begin{align*}
        \ent[\nu]{f} 
        \leq \gamma_T \cdot \sum_{i=1}^n \E[\ent[\nu(\cdot | X_{-i})]{f}]. 
    \end{align*}
    
\end{corollary}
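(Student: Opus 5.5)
Starting from the entropy conservation inequality, I apply the entropy tensorization of Lemma \ref{lem:enttens} to the product measure $\nu_T$. Then I show that the resulting averaged conditional entropies are dominated by those of $\nu$. Concretely, approximate entropy conservation gives
\begin{align*}
    \ent[\nu]{f} \leq \gamma_T \cdot \E\Bracket{\ent[\nu_T]{f}}.
\end{align*}
Since $\nu_T$ is (almost surely) a product measure, Lemma \ref{lem:enttens} yields
\begin{align*}
    \ent[\nu_T]{f} \leq \sum_{i=1}^n \E_{\nu_T}\Bracket{\ent[\nu_T(\cdot|X_{-i})]{f}}.
\end{align*}
It then suffices to show, for each $i$, that
\begin{align*}
    \E\Bracket{\E_{\nu_T}\Bracket{\ent[\nu_T(\cdot|X_{-i})]{f}}} \leq \E_{\nu}\Bracket{\ent[\nu(\cdot|X_{-i})]{f}},
\end{align*}
where the outer expectation is over the randomness of the localization process.

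For this comparison I use two structural facts. First, the process is a martingale in the measure: $\E[\nu_T] = \nu$, because $F_t(x)/\int F_t\,d\nu$ is a martingale for each $x$ (this follows from the SDE in Definition \ref{def:stoclocproc}, or from the explicit construction in Theorem \ref{thm:stoclocproc}). Second, I use the form of $\nu_T$. In the constant-$C$ case its density with respect to $\nu$ is $\exp(-\frac T2 x\T C^2 x + y_T\T C^2 x)$. For fixed $x_{-i}$, this tilt restricted to coordinate $i$ is a function of $x_i$ times a function of $x_{-i}$ only when the quadratic couplings cancel appropriately. In general, though, the density of $\nu_T(\cdot|x_{-i})$ with respect to $\nu(\cdot|x_{-i})$ is some function $h_{x_{-i}}(x_i)$. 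So $\nu_T(\cdot|x_{-i})$ is a tilt of $\nu(\cdot|x_{-i})$, and the $x_{-i}$-marginal of $\nu_T$ is a reweighting of that of $\nu$.

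The key step uses the variational formula $\ent[\mu]{f} = \inf_{c>0} \int (f\log(f/c) - f + c)\, d\mu$. Writing $\E_{\nu_T}[\ent[\nu_T(\cdot|X_{-i})]{f}] = \ent[\nu_T]{f} - \ent[\nu_{T,-i}]{\bar f_i}$ is awkward. Instead I define the functional
\begin{align*}
    \Phi_i(\mu) := \E_\mu\Bracket{\ent[\mu(\cdot|X_{-i})]{f}} = \inf_{g(x_{-i})>0} \int \Parent{f\log(f/g) - f + g}\, d\mu,
\end{align*}
with the infimum taken over positive functions $g$ of $x_{-i}$ alone. As an infimum of functionals that are linear in $\mu$, $\Phi_i$ is concave in $\mu$. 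Jensen's inequality for concave functionals, combined with the martingale property $\E[\nu_T] = \nu$, then gives $\E[\Phi_i(\nu_T)] \leq \Phi_i(\E[\nu_T]) = \Phi_i(\nu)$, which is exactly the required comparison. Summing over $i$ and combining with the first two displays gives the claim with constant $\gamma_T$.

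The main obstacle is rigor in this last step. I need to justify that the variational representation holds, with the infimum attained at $g = \E_\mu[f|X_{-i}]$. I also need to justify exchanging the expectation over the process with the infimum, i.e.\ Jensen for a concave functional of a random measure. I would handle this by fixing the optimal $g^\ast = \E_\nu[f|X_{-i}]$ for $\nu$. Then $\Phi_i(\nu_T) \leq \int (f\log(f/g^\ast) - f + g^\ast)\, d\nu_T$ pointwise, and taking expectations using linearity and $\E[\nu_T] = \nu$ (via Fubini, assuming integrability) yields $\Phi_i(\nu)$. This avoids any general Jensen argument.
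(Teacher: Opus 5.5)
Your proof is correct and follows the paper's three-step chain: approximate entropy conservation, then tensorization (Lemma \ref{lem:enttens}) for the product measure $\nu_T$, then a comparison showing that the process-averaged conditional entropies of $\nu_T$ are dominated by those of $\nu$.

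The only difference is in the last step. The paper obtains it by citing the supermartingale property of Lemma \ref{lem:supermartent}, using the projection kernel onto $x_{-i}$ and evaluating between times $0$ and $T$. You instead prove that endpoint inequality directly. You fix $g^\ast = \E_\nu[f \mid X_{-i}]$ in the representation $\E_\mu[\ent[\mu(\cdot|X_{-i})]{f}] = \inf_{g} \int (f\log(f/g) - f + g)\,d\mu$, and then average over the process using $\E[\nu_T] = \nu$. This is a proof that $\mu \mapsto \E_\mu[\ent[\mu(\cdot|X_{-i})]{f}]$ is concave, which is the usual reason such supermartingale statements hold. The citation gives the full statement for all $s<t$ and arbitrary kernels, which is more than is needed here. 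Your version is self-contained and, for this step, uses nothing about the process beyond its martingale property.

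You can drop your integrability caveat. Since $a\log(a/b) - a + b \ge 0$, the integrand $f\log(f/g^\ast) - f + g^\ast$ is pointwise nonnegative, so Tonelli applies. For the same reason it suffices that $\E[F_T(x)] \le 1$ pointwise. This holds because $F_t(x)$ is a nonnegative local martingale, hence a supermartingale; note also that the SDE keeps $\int F_t\,d\nu \equiv 1$, so $\nu_T = F_T\,\nu$. You therefore need not check that $F_t(x)$ is a true martingale.

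Your aside about when the tilt defining $\nu_T(\cdot|x_{-i})$ factorizes plays no role in the argument and can be removed.
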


\begin{proof}

    The proof is by the following line of inequalities: 
    \begin{align*}
        \ent[\nu]{f} 
        &\leq \gamma_T \cdot \E[\ent[\nu_T]{f}] \\
        &\leq \gamma_T \cdot \E[\sum_{i=1}^n \E_{X_{-i} \sim \nu_T K}[\ent[\nu_T(\cdot | X_{-i})]{f}]] \\
        &\leq \gamma_T \cdot \sum_{i=1}^n \E_{X_{-i} \sim \nu K}[\ent[\nu(\cdot | X_{-i})]{f}]. 
    \end{align*}

    The first inequality holds by $\gamma_T$-approximate entropy conservation. The second by entropy tensorization of Lemma \ref{lem:enttens} that is applicable as $\nu_T$ is a product measure. The third inequality holds by Lemma \ref{lem:supermartent} where $K(x,A) = \indicator{x_{-i} \in A}$ and hence $\nu | K \triangleright x_{-i} = \nu(\cdot | x_{-i})$. 
\end{proof}

\begin{lemma}{Supermartingality of Entropy (Lemma 39, \cite{anari:koehler:vuong2024})}
    \label{lem:supermartent}

    Assume $\nu_t$ is a stochastic localization process as in Definition \ref{def:stoclocproc}. Let $K$ be any Markov kernel. Then, the stochastic process $t \mapsto \E_{y\sim \nu_t K}[\ent[\nu_t | K \triangleright y]{f}]$ is a supermartingale. Let $(\mathcal F_t)_{t\geq 0}$ be the filtration generated by the Brownian motion $B_t$ of the process. Equivalently, for all $0 \leq s < t$
    \begin{align*}
        \E[\E_{y\sim \nu_t K}[\ent[\nu_t | K \triangleright y]{f}] | \mathcal F_s] 
        \leq \E_{y\sim \nu_s K}[\ent[\nu_s | K \triangleright y]{f}].
    \end{align*}
\end{lemma}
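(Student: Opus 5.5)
The plan is to write the quantity as a functional of the measure, apply Itô's formula to it, and check that the drift is nonpositive. Set $\Phi(\mu) := \E_{y\sim \mu K}[\ent[\mu | K \triangleright y]{f}]$. By the chain rule for entropy (disintegrating $\mu$ along $K$),
\begin{align*}
    \Phi(\mu) = \ent[\mu]{f} - \ent[\mu K]{\tfrac{d (f\mu)K}{d\mu K}},
\end{align*}
so $\Phi(\mu) = \int f\log f\, d\mu - \int g_\mu \log g_\mu \, d(\mu K)$, where $g_\mu := d((f\mu)K)/d(\mu K)$. Along the localization $\nu_t = F_t \nu$ the first term is a martingale, because $F_t(x)$ is a martingale for each fixed $x$: $dF_t(x) = F_t(x)(x-\mu_t)\T C_t dB_t$ with $\mu_t := \int y\, d\nu_t(y)$. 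It therefore suffices to show that $M_t := \int g_{\nu_t}\log g_{\nu_t}\, d(\nu_t K)$ is a submartingale, i.e.\ that it has nonnegative drift.

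To see this, write $a_t(y) := \frac{d(f\nu_t)K}{d\nu K}(y)$ and $b_t(y) := \frac{d(\nu_t K)}{d\nu K}(y)$. Both are linear in $F_t$, since $a_t = \frac{d (fF_t\nu)K}{d\nu K}$ and $b_t = \frac{d(F_t\nu)K}{d\nu K}$, hence both are martingales with
\begin{align*}
    da_t = \alpha_t\T C_t dB_t, \qquad db_t = \beta_t\T C_t dB_t,
\end{align*}
where $\alpha_t, \beta_t$ are vector-valued and obtained by pushing $F_t(x)(x-\mu_t)$ (times $f$ for $\alpha_t$) through $K$. Then $M_t = \int h(a_t, b_t)\, d(\nu K)$ with $h(a,b) := a\log(a/b)$. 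The function $h$ is jointly convex, being the perspective of $u\mapsto u\log u$. Itô's formula then gives drift
\begin{align*}
    \tfrac12 \int \operatorname{tr}\Bigl(\nabla^2 h(a_t,b_t)\, [\alpha_t, \beta_t]\T C_t^2 [\alpha_t,\beta_t]\Bigr)\, d(\nu K) \ge 0,
\end{align*}
which is nonnegative by convexity of $h$. Consequently $\Phi(\nu_t) = (\text{martingale}) - M_t$ has nonpositive drift, which is the claimed supermartingale inequality after taking conditional expectations given $\mathcal F_s$.

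The main obstacle is rigor rather than the idea. One has to justify exchanging Itô differentials with the integrals over $x$ and over $y$. One also needs the local martingale parts to be true martingales, which requires integrability of $f\log f$ under $\nu_t$ and positivity of $b_t$. I would first handle bounded $f$ that are bounded away from zero and a bounded driving $C_t$, using the explicit Gaussian-tilt form of Theorem \ref{thm:stoclocproc} for moment bounds, and then pass to general $f$ by monotone approximation and Fatou. An alternative that avoids these stochastic calculus details is to observe that conditioning on $\mathcal F_t$ realizes $\nu_s$ as the mixture $\E[\nu_t|\mathcal F_s]$. The claim then follows from concavity of $\mu\mapsto\Phi(\mu)$ under mixtures: conditional entropy increases under mixing, since the variable selecting the mixture component is extra information. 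I would use this as a cross-check.
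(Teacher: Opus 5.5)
Your proof is correct, but its main line differs from the route behind this lemma. The paper gives no proof here and imports Lemma 39 of \cite{anari:koehler:vuong2024}. The argument suited to that general localization-scheme setting is the one you list only as a cross-check (there you mean conditioning on $\mathcal F_s$): use $\nu_s=\E[\nu_t\mid\mathcal F_s]$, concavity of $\Phi$, and conditional Jensen.

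Both routes rest on the same fact, the joint convexity of $h(a,b)=a\log(a/b)$. Write $\Phi(\mu)=\int f\log f\,d\mu-\int h(a_\mu,b_\mu)\,d(\nu K)$, with $(a_\mu,b_\mu)$ linear in $\mu$. Then Jensen applied pointwise in $y$, together with Tonelli (using $h(a,b)\ge a-b$), finishes the proof in one line. The only analytic input is that each $F_t(x)$ is a true martingale. This holds because $F_t(x)$ is a nonnegative local martingale and $\int F_t\,d\nu\equiv 1$, which forces $\E[F_t(x)]=1$ for $\nu$-a.e.\ $x$. This route needs no stochastic Fubini and also covers discrete-time localization schemes.

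Your It\^o route is tied to the SDE of Definition~\ref{def:stoclocproc}, but in exchange it gives an explicit dissipation rate. With $h_{aa}=1/a$, $h_{ab}=-1/b$, $h_{bb}=a/b^2$, the drift of $M_t$ is $\frac12\int a_t^{-1}\twonorm{C_t(\alpha_t-\tfrac{a_t}{b_t}\beta_t)}^2\,d(\nu K)$. That is the kind of quantity needed for quantitative statements such as Lemma~\ref{lem:entropyconservation}, not for bare supermartingality. If you keep this route, note that $\Phi(\nu_t)\ge 0$, so once you have a local supermartingale, Fatou makes it a true one. The only remaining technical burden is then interchanging the stochastic differentials with the $x$- and $y$-integrals.
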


ATE for well-conditioned Gaussians now follows by collecting all of these results. 

\begin{theorem}{ATE for Gaussians}
    \label{thm:ATEgauss}
    
    Let $\mathcal N$ be the multivariate Gaussian probability measure with mean $\mu \in \R^n$ and covariance matrix $\Sigma \in \R^{n\times n}$ \st{} $\Sigma \succ 0$. Then, with $X \sim \mathcal N$ the following ATE holds: 
    \begin{align*}
        \ent[\mathcal N]{f} 
        \lesssim \kappa(\Sigma) \cdot \sum_{i=1}^n \E[\ent[\mathcal N(\cdot | X_{-i})]{f}]. 
    \end{align*}
\end{theorem}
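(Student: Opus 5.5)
We run a stochastic localization process with a constant driving matrix $C$, chosen so that at a finite time $T$ the localized measure $\nu_T$ is a product measure. We then apply Lemma \ref{lem:entropyconservation} and Corollary \ref{cor:ATEfromCons}, controlling the entropic stability constants through Lemma \ref{lem:entstabfromcovbound} and Corollary \ref{cor:covboundlogconc}.

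\textbf{Choice of driving matrix and identification of $\nu_T$.} We take the constant driving matrix $C = c I_n$ with $c>0$ to be tuned. By Theorem \ref{thm:stoclocproc}, $d\nu_t/d\mathcal N \propto \exp(-\frac{t c^2}{2}\twonorm{x}^2 + c^2 y_t\T x)$. Hence $\nu_t$ is Gaussian with precision matrix
\begin{align*}
    \Sigma_t\inv := \Sigma\inv + t c^2 I_n
\end{align*}
and some random mean. This is not yet a product measure, since $\Sigma_t\inv$ is not diagonal. A product measure at time $T$ requires an exactly diagonal precision, which forces $C^2 = D - \Sigma\inv$ (with $T=1$) for a diagonal matrix $D$. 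We pick $D = \eigmax{\Sigma\inv}\cdot(1+\delta) I_n$, or simply $D=2\eigmax{\Sigma\inv} I_n$, so that $C^2 := D - \Sigma\inv \succ 0$. Running the process up to $T=1$ then gives $\nu_1$ with precision $\Sigma\inv + C^2 = D$, a product Gaussian. The Brownian construction of Theorem \ref{thm:stoclocproc} applies because it only needs $C\succ 0$ constant.

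\textbf{Entropic stability along the path.} For $t\in[0,1]$, $\nu_t$ is Gaussian with covariance
\begin{align*}
    \Sigma_t = (\Sigma\inv + tC^2)\inv.
\end{align*}
Exponential tilts of a Gaussian keep the same covariance, so we may take $A = \Sigma_t$ in Lemma \ref{lem:entstabfromcovbound}; alternatively, Corollary \ref{cor:covboundlogconc} gives the same bound via the strong log-concavity parameter $\eigmin{\Sigma\inv + tC^2}$. This yields $\epsilon_t = \opnorm{C\Sigma_t C}$. Since $C^2 \preceq D$ and $\Sigma_t \preceq (\Sigma\inv)\inv = \Sigma$, we get the crude bound
\begin{align*}
    \epsilon_t \le \opnorm{C}^2\,\opnorm{\Sigma} \le 2\eigmax{\Sigma\inv}\,\eigmax{\Sigma} = 2\kappa(\Sigma).
\end{align*}
Lemma \ref{lem:entropyconservation} then gives conservation with factor $\exp(\int_0^1\epsilon_t\,dt)$.

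\textbf{Main obstacle: exponential versus linear dependence on $\kappa(\Sigma)$.} The crude bound only yields $\exp(2\kappa(\Sigma))$, whereas the theorem claims $\lesssim \kappa(\Sigma)$, so the integral $\int_0^1\epsilon_t\,dt$ must be computed exactly. Since $C^2$ and $\Sigma\inv$ commute ($D$ is a multiple of the identity), we can diagonalize both simultaneously. Write $\lambda_j$ for the eigenvalues of $\Sigma\inv$ and $d = 2\lambda_{\max}$. In the eigenbasis, $C\Sigma_tC$ has eigenvalues
\begin{align*}
    \frac{d-\lambda_j}{\lambda_j + t(d-\lambda_j)}.
\end{align*}
Each such eigenvalue integrates over $t\in[0,1]$ to
\begin{align*}
    \log\frac{\lambda_j + (d-\lambda_j)}{\lambda_j} = \log\frac{d}{\lambda_j} \le \log\bigl(2\kappa(\Sigma)\bigr).
\end{align*}
The operator norm is the maximum eigenvalue, and the integral of the maximum could exceed the maximum of the integrals. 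However, every eigenvalue is decreasing in $\lambda_j$, so the maximum is attained at $\lambda_{\min}$ for all $t$ simultaneously. Therefore
\begin{align*}
    \int_0^1 \epsilon_t\,dt = \log\frac{d}{\lambda_{\min}} = \log\bigl(2\kappa(\Sigma)\bigr),
\end{align*}
and the conservation factor is $\exp(\int_0^1\epsilon_t\,dt) = 2\kappa(\Sigma)$. Corollary \ref{cor:ATEfromCons}, together with Lemma \ref{lem:supermartent} for the conditional measures $\mathcal N(\cdot|X_{-i})$, then yields the theorem with constant $2\kappa(\Sigma)$. The care needed lies in this explicit integral and in checking that the constant-$C$ process indeed reaches an exactly diagonal precision at time $T=1$.
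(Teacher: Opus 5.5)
Your proposal is correct and matches the paper's proof: a constant driving matrix with $C^2 \propto cI_n - \Sigma\inv$ so that $\nu_T$ has precision $cI_n$, entropic stability from covariance bounds, and the exact integral $\int_0^T \epsilon_t\,dt = \log\bigl(c/\eigmin{\Sigma\inv}\bigr)$, giving the constant $(1+\varepsilon)\kappa(\Sigma)$; your $2\kappa(\Sigma)$ is the case $\varepsilon = 1$, $T = 1$. The only difference is that you use the exact tilt covariance $\Sigma_t$ instead of the strong log-concavity bound $\alpha_t\inv I_n$, but $C^2$ and $\Sigma_t$ share their top eigenvector (the one for $\eigmin{\Sigma\inv}$), so $\opnorm{C\Sigma_t C} = \opnorm{C}^2/\alpha_t$ and both routes give the same $\epsilon_t$.
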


\begin{proof}

    This proof is inspired by the proof of \cite[Theorem 54]{anari:koehler:vuong2024} that derives ATE for SK-models with well-behaved interaction matrices in the high-temperature regime. 

    As entropy is invariant to shifts of the mean $\mu$, we can show the statement for a zero-mean Gaussian. Let $\nu$ be the $n$-dimensional Gaussian measure $\mathcal N(0,\Sigma)$. Hence, its density is 
    \begin{align*}
        \nu(x) \propto \exp\Parent{-\frac 1 2 x\T \Sigma\inv x}. 
    \end{align*}

    Using the explicit construction of a stochastic localization process in Theorem \ref{thm:stoclocproc} with $\nu_0 := \nu$, positive definite driving matrix $C$ and process $y_t = t X_0 + C\inv B_t$ yields the following Radon-Nikodym derivatives \wrt{} $\nu$ and Lebesgue measure 
    \begin{align*}
        \frac{d\nu_t(x)}{d\nu}
        \propto \exp\Parent{-\frac{t}{2} x\T C^2 x + y_t \T C^2 x},
        \quad 
        \nu_t(x) 
        \propto \exp\Parent{-\frac{1}{2} x\T (\Sigma\inv + tC^2) x + y_t\T C^2 x}. 
    \end{align*}

    We would like to choose $C \succ 0$ \st{} $\Sigma\inv + TC^2 = c I_n$ for some $c > 0$ as this would allow us to apply Corollary \ref{cor:ATEfromCons} because $\nu_T$ would be a product measure. For any $\varepsilon > 0$, we can obtain this with 
    \begin{align*}
        c := (1+\varepsilon) \eigmax{\Sigma\inv} > \eigmax{\Sigma\inv}
        \quad \text{and} \quad 
        C := \sqrt{1/T} \cdot (cI_n - \Sigma\inv)^{1/2} 
        \succ 0. 
    \end{align*}  

    With these choices $\nu_T$ then indeed is a product measure, since 
    \begin{align*}
        TC^2 
        = T/T \cdot (cI_n - \Sigma\inv)
        = (cI_n - \Sigma\inv).
    \end{align*}

    We further show that $\nu_t$ are strongly log-concave for all $t \in [0,T]$: 
    \begin{align*}
        -\nabla_x^2 \log(\nu_t(x))
        = \Sigma\inv + tC^2
        &= \Sigma\inv + t/T \cdot (cI_n - \Sigma\inv) \\
        &= (1-t/T) \cdot \Sigma\inv + t/T \cdot cI_n \\
        &\succeq ((1-t/T) \cdot \eigmin{\Sigma\inv} + t/T \cdot c) I_n \\ 
        &= (\eigmin{\Sigma\inv} + t/T \cdot (c - \eigmin{\Sigma\inv})I_n 
        =: \alpha_t I_n. 
    \end{align*}

    Now, by Corollary \ref{cor:covboundlogconc} we have $\cov[\mathcal T_v \nu]{X} \preceq 1/\alpha_t \cdot I_n$ and thus by Lemma \ref{lem:entstabfromcovbound} $\nu_t$ is $\epsilon_t$-entropically stable \wrt{} the function $\psi(x,y) = \frac 1 2 \twonorm{C(x-y)}^2$ where 
    \begin{align*}
        \epsilon_t 
        := \frac{1}{T\alpha_t} \opnorm{cI_n - \Sigma\inv}
        = \frac{c-\eigmin{\Sigma\inv}}{T\eigmin{\Sigma\inv} + t(c - \eigmin{\Sigma\inv})}
        = \frac{c-d}{Td + t(c-d)}.
    \end{align*}

    Here, $d: = \eigmin{\Sigma\inv}$. Entropic stability lets us apply Lemma \ref{lem:entropyconservation} by which we have $\E[\ent[\nu_T]{f}] \geq \exp(-\int_0^T \epsilon_t dt) \cdot \ent[\nu_0]{f}$. By Corollary \ref{cor:ATEfromCons} we translate this into the following ATE: 
    \begin{align*}
        \ent[\nu]{f} 
        &\leq \exp\Parent{\int_0^T \epsilon_t dt} \cdot \sum_{i=1}^n \E[\ent[\nu(\cdot | X_{-i})]{f}] \\
        &\leq \exp\Parent{\Bracket{\log(Td + t(c-d))}^T_0} \cdot \sum_{i=1}^n \E[\ent[\nu(\cdot | X_{-i})]{f}]
        \leq \frac{c}{d} \cdot \sum_{i=1}^n \E[\ent[\nu(\cdot | X_{-i})]{f}].
    \end{align*}

    The statement follows as $\eigmin{\Sigma\inv} = 1/\eigmax{\Sigma}$ and $\eigmax{\Sigma\inv} = 1/\eigmin{\Sigma}$ and thus 
    \begin{align*}
        \frac{c}{d} 
        = (1+\varepsilon) \cdot \frac{\eigmax{\Sigma\inv}}{\eigmin{\Sigma\inv}}
        = (1+\varepsilon) \cdot \frac{\eigmax{\Sigma}}{\eigmin{\Sigma}} 
        = (1+\varepsilon) \cdot \kappa(\Sigma).
    \end{align*}
\end{proof}

\subsection{ATE for Strongly Log-Concave Measures}

\cite{ascolani:lavenant:zanella2026} approach ATE through a variational characterization of the kernel of the Gibbs sampler in terms of Kullback-Leibler divergences. They exploit that the kernel of the Gibbs sampler is invariant under coordinate-wise transformations, which allows them to lower bound the decay of Kullback-Leibler divergence using carefully chosen transport maps that preserve the block structure a Gibbs sampler operates on. With this approach, they derive an ATE for a class of measures with densities $\nu(x) \propto \exp(-V(x))$ that is a superset of strongly log-concave and log-smooth measures and which is characterized by Assumption \ref{ass:strongcondition} placed on their potential $V : \R^n \to \R$. 

\begin{assumption}{Strong Condition}
    \label{ass:strongcondition}

    Let $V_m : \R^{n_m} \to \R$ be convex for all $m \in [M]$ and let $V : \R^n \to \R$ with $n = n_1 +\dotsm+ n_M$ be
    \begin{align*}
        V(x) 
        = V_0(x) + \sum_{m=1}^M V_m(x_m), 
    \end{align*}

    where $V_0 : \R^n \to \R$ is continuously differentiable and
    \begin{enumerate}
        \item $x_m \mapsto V_0(x_m, y_{-m})$ is $L_m$-smooth for all $y_{-m} \in \R^{n_m}$ and $m \in [M]$,
        \item $x \mapsto V_0(x) - \frac{\lambda^\star}{2} \sum_{m=1}^M L_m \twonorm{x_m}^2$ is convex. 
    \end{enumerate}

    Further, suppose that $\lambda^\star > 0$ and call $\kappa^\star = {\lambda^\star}\inv$ the coordinate-wise condition number of $V$.
            
\end{assumption}

As this assumption on the potential $V$ might be hard to verify, \cite{ascolani:lavenant:zanella2026} provide the following weaker condition under which Assumption \ref{ass:strongcondition} holds. 

\begin{lemma}{Weak Condition (Lemma 2.4, \cite{ascolani:lavenant:zanella2026})}
    \label{lem:weakcondition}
    
    Let $\nu$ on $\R^n$ be an $\alpha$-strongly log-concave and $\beta$-log-smooth probability measure with density $\nu(x) \propto \exp(-V(x))$. Then, $V : \R^n \to \R$ satisfies the conditions in Assumption \ref{ass:strongcondition} with $\kappa = \beta/\alpha \geq \kappa^\star \geq 1$.
\end{lemma}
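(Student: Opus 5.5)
We need to prove Lemma \ref{lem:weakcondition}: if $\alpha I_n \preceq \nabla^2 V \preceq \beta I_n$, then $V$ fits Assumption \ref{ass:strongcondition} with $\kappa^\star \le \beta/\alpha$, and $\kappa^\star \ge 1$.

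The plan is to use the trivial decomposition. We take $M$ blocks (for instance $M=n$ single coordinates, or any block partition), set $V_m \equiv 0$ for $m \in [M]$, which are trivially convex, and set $V_0 := V$. Since $\nu$ is $\beta$-log-smooth, $V$ is $C^2$ (or at least $C^1$ with $\beta$-Lipschitz gradient), so $V_0$ is continuously differentiable. It then remains to verify the two numbered conditions with suitable $L_m$ and $\lambda^\star$.

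\textbf{Condition 1.} For fixed $y_{-m}$, the Hessian of $x_m \mapsto V(x_m, y_{-m})$ is the principal block $[\nabla^2 V]_{mm}$. Because $\nabla^2 V \preceq \beta I_n$, every principal block satisfies $[\nabla^2 V]_{mm} \preceq \beta I_{n_m}$. Hence each block map is $L_m$-smooth with $L_m := \beta$. If only Lipschitz gradients are available, the same conclusion follows by restricting the inequality $\|\nabla V(x)-\nabla V(x')\| \le \beta\|x-x'\|$ to $x,x'$ that differ only in block $m$.

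\textbf{Condition 2.} With $L_m = \beta$, we must show that $V(x) - \frac{\lambda^\star \beta}{2}\twonorm{x}^2$ is convex. Its Hessian is $\nabla^2 V - \lambda^\star\beta I_n \succeq (\alpha - \lambda^\star\beta) I_n$. The choice $\lambda^\star := \alpha/\beta > 0$ makes this positive semi-definite. Without twice differentiability, we would instead use the characterization of $\alpha$-strong convexity, namely that $V - \frac{\alpha}{2}\twonorm{\cdot}^2$ is convex. This gives $\kappa^\star = \beta/\alpha = \kappa$.

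\textbf{Reading of the inequalities.} The stated $\kappa \ge \kappa^\star$ should be understood as: the best admissible $\lambda^\star$ (the largest one) has inverse at most $\beta/\alpha$. Our explicit admissible choice achieves exactly $\beta/\alpha$.

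For $\kappa^\star \ge 1$, suppose conditions 1 and 2 hold with some $L_m$ and $\lambda^\star$, where $L_m$ is the block smoothness of $V_0$. Restricting the convex function in condition 2 to block $m$ with the other blocks fixed shows that $x_m \mapsto V_0(x_m, y_{-m}) - \frac{\lambda^\star L_m}{2}\twonorm{x_m}^2$ is convex. So the block map is $\lambda^\star L_m$-strongly convex and at the same time $L_m$-smooth, which forces $\lambda^\star L_m \le L_m$, i.e.\ $\lambda^\star \le 1$.

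The main subtlety is this lower bound. It needs $L_m > 0$, and $L_m$ must be the tight smoothness constant rather than an arbitrary upper bound, with $\lambda^\star$ taken to be the largest admissible value. I would handle this by defining $L_m$ as the smallest valid block smoothness constant, since otherwise the claim is not meaningful. Everything else reduces to principal-submatrix eigenvalue bounds.
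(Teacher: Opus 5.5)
Your argument is correct. The paper does not prove this lemma itself; it imports it from Lemma 2.4 of Ascolani, Lavenant and Zanella. What you wrote is the standard argument behind that citation: take $V_0=V$ and $V_m\equiv 0$, get $L_m=\beta$ from the principal-block bound $[\nabla^2 V]_{mm}\preceq \beta I_{n_m}$, and get $\lambda^\star=\alpha/\beta$ from $\alpha$-strong convexity, so $\kappa^\star\le \beta/\alpha$. Using the tight block constants $L_m\le\beta$ instead would only sharpen this to $\kappa^\star\le \max_m L_m/\alpha$.

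One simplification for the part you flagged as the main subtlety. For $\kappa^\star\ge 1$ you do not need $L_m$ to be the tight smoothness constant, nor $\lambda^\star$ to be the largest admissible value. A function that is $\lambda^\star L_m$-strongly convex and $L_m$-smooth, for \emph{any} valid $L_m$, already satisfies $\lambda^\star L_m\le L_m$. Moreover, $L_m>0$ is automatic in this setting: each block restriction of $V$ is $\alpha$-strongly convex, which forces every admissible $L_m\ge\alpha$.
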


Imposing Assumption \ref{ass:strongcondition} on the potential $V$ of the measure of interest $\nu$ on $\R^n$, they prove:

\begin{theorem}{ATE for Well-Conditioned Measures (Theorem 3.1, \cite{ascolani:lavenant:zanella2026})}
    \label{thm:generalATE}

    Let the probability measure $\nu$ on $\R^n$ have density $\nu(x) \propto \exp(-V(x))$ where $V : \R^n \to \R$ fulfills Assumption \ref{ass:strongcondition}. Then, with $X \sim \nu$ we have
    \begin{align*}
        \ent[\nu]{f} 
        \leq \kappa^\star \cdot \sum_{i=1}^n \E[\ent[\nu(\cdot | X_{-i})]{f}]. 
    \end{align*}
    
\end{theorem}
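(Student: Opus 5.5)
The statement is Theorem 3.1 of \cite{ascolani:lavenant:zanella2026}. The plan is to prove the equivalent statement that the random-scan Gibbs sampler contracts Kullback--Leibler divergence. Let $P_{GS} := \frac1n\sum_{i=1}^n P_i$, where $P_i$ resamples coordinate $i$ from $\nu(\cdot|x_{-i})$. By the standard equivalence (cf.\ \cite{caputo2022}), $\kappa$-ATE holds if and only if
\begin{align*}
    \DKL{\mu P_{GS}}{\nu} \leq \Parent{1-\tfrac{1}{\kappa n}}\DKL{\mu}{\nu}
\end{align*}
for all $\mu \ll \nu$. More precisely, we need the version of this bound expressed through the \emph{entropy decrease of a single update}. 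Write $f = d\mu/d\nu$. Then
\begin{align*}
    \E[\ent[\nu(\cdot|X_{-i})]{f}] = \DKL{\mu}{\nu} - \DKL{\mu P_i}{\nu},
\end{align*}
since $\mu P_i$ has density $\E[f|X_{-i}]$. So the goal becomes the lower bound
\begin{align*}
    \sum_i \Bracket{\DKL{\mu}{\nu}-\DKL{\mu P_i}{\nu}} \geq \tfrac{1}{\kappa^\star}\DKL{\mu}{\nu}.
\end{align*}

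To get this I would use the variational characterization of the Gibbs update: $\mu P_i$ minimizes $\DKL{\rho}{\nu}$ over all $\rho$ whose $x_{-i}$-marginal equals that of $\mu$. Any competitor $\rho$ with the same $x_{-i}$-marginal therefore gives an upper bound on $\DKL{\mu P_i}{\nu}$. I would build competitors by transport. Let $T$ be the optimal (Brenier) map from $\mu$ to $\nu$ and $x_t = (1-t)x + tT(x)$. For block $m$, push $\mu$ forward by the map that moves only the $m$-th block along the geodesic, $x_m \mapsto (1-t)x_m + tT_m(x)$, with the other blocks fixed. This preserves the $x_{-m}$-marginal, so it is admissible.

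Next I would bound the KL divergence of each competitor, working block by block and treating single coordinates as blocks via the $M$-block form of Assumption~\ref{ass:strongcondition}. Split $\DKL{\rho}{\nu} = \int V\,d\rho - H(\rho) + \text{const}$ into three pieces.
\begin{itemize}
    \item The separable convex parts $V_m$ are handled by convexity along the block interpolation.
    \item The entropy term is handled by displacement convexity of $-H$ under block-wise maps. The Jacobian is triangular with the block determinant as its only nontrivial factor, and $\log\det$ is concave on the relevant matrices.
    \item The coupling term $V_0$ is handled by $L_m$-smoothness in block $m$, giving a quadratic error $\frac{L_m t^2}{2}\int \twonorm{T_m(x)-x_m}^2 d\mu$.
\end{itemize}
Summing over $m$ and comparing with the full geodesic, the first-order terms add up to the first-order variation of $\DKL{\cdot}{\nu}$ along the full geodesic. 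That variation is bounded via convexity of $V_0 - \frac{\lambda^\star}{2}\sum_m L_m\twonorm{x_m}^2$ (item 2 of the assumption), which yields an HWI/Talagrand-type inequality in the weighted norm $\sum_m L_m \twonorm{\cdot_m}^2$. That inequality gives
\begin{align*}
    \DKL{\mu}{\nu} \leq \langle \text{gradient},\, x - T\rangle - \frac{\lambda^\star}{2}\sum_m L_m\int \twonorm{T_m - x_m}^2 d\mu.
\end{align*}
Finally, optimize over $t\in[0,1]$, choosing $t$ of order $\lambda^\star$. The gain from each first-order term beats the quadratic smoothness error, and summing over $m$ gives
\begin{align*}
    \sum_m \Bracket{\DKL{\mu}{\nu} - \DKL{\mu P_m}{\nu}} \geq \lambda^\star\, \DKL{\mu}{\nu},
\end{align*}
which is exactly the ATE with constant $\kappa^\star$.

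The main obstacle is the step that splits the full-geodesic first variation into a sum over blocks while keeping the constant exactly $\kappa^\star$, with no extra factor. The entropy part does not decompose additively in an obvious way. I would first try to bound the entropy change of each block move by the corresponding block of $\operatorname{tr}\log$ of the Jacobian of $T$, using that $\nabla T$ is symmetric positive semidefinite. For such matrices, $\log\det$ of the full matrix is at most the sum of $\log\det$ of its diagonal blocks (Fischer's inequality). That is exactly the direction needed for the block competitors to collectively do at least as well as the joint geodesic.

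Regularity issues (existence of $T$, integrability) I would handle by approximation with smooth, compactly perturbed $\mu$.
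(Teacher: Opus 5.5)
Your proposal is correct and is essentially the argument of \cite{ascolani:lavenant:zanella2026}, which the paper only cites and summarizes. The matching ingredients are the variational characterization of each Gibbs update, block-wise interpolation along the Brenier map as admissible competitors, concavity of $\log\det$ plus Fischer's inequality for the entropy, block smoothness of $V_0$ combined with its strong convexity in the weighted norm, and the exact choice $t=\lambda^\star$.

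Two minor points. First, contraction of $P_{GS}$ follows from ATE but is not equivalent to it; this is harmless, since you go on to prove the per-update form directly. Second, for the entropy and the $V_m$ only chord bounds along the interpolation are available, so the block sum should be compared with the full endpoint change of these terms rather than their first variation. That leaves the strong convexity of $V_0$ as the only ingredient your ``HWI-type'' step actually needs.
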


For Gaussian measures $\mathcal N(\mu, \Sigma)$ on $\R^n$, this general ATE implies that our previously derived $\kappa(\Sigma)$-ATE is not tight. Instead, computing the tight constant $\kappa^\star$ obtained through verifying Assumption \ref{ass:strongcondition} for such Gaussians yields the $\kappa^\star(\Sigma)$-ATE in Lemma \ref{lem:ATEtightgauss} below where $\kappa^\star(\Sigma) := \eigmax{D^{1/2} \Sigma D^{1/2}}$ and $D = \diagonal{\Sigma\inv}$. This $\kappa^\star(\Sigma)$ is the maximum eigenvalue of the precision standardized covariance matrix instead of the condition number and $\kappa^\star(\Sigma) \leq \kappa(\Sigma)$.

\begin{lemma}{Tight ATE for Gaussians (Lemma 3.10, \cite{ascolani:lavenant:zanella2026})}
    \label{lem:ATEtightgauss}

    Let $\mathcal N$ be the multivariate Gaussian probability measure with mean $\mu \in \R^n$ and covariance matrix $\Sigma \in \R^{n \times n}$ and let $X \sim \mathcal N$. Then, with $\kappa^\star(\Sigma) := \eigmax{D^{1/2} \Sigma D^{1/2}}$ where $D := \diagonal{\Sigma\inv}$ we have
    \begin{align*}
        \ent[\mathcal N]{f} 
        \leq \kappa^\star(\Sigma) \cdot \sum_{i=1}^n \E[\ent[\mathcal N(\cdot | X_{-i})]{f}].
    \end{align*}
        
\end{lemma}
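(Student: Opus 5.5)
Since entropy is invariant under translation of the measure (replacing $f$ by $f(\cdot+\mu)$ leaves both sides unchanged, and the conditionals of $\mathcal N(\mu,\Sigma)$ are translates of those of $\mathcal N(0,\Sigma)$), I will assume $\mu=0$. I will then apply Theorem \ref{thm:generalATE} with the finest block structure $M=n$, $n_m=1$. To do so I must exhibit a decomposition of $V(x)=\frac12 x\T\Sigma\inv x$ that satisfies Assumption \ref{ass:strongcondition} with $\lambda^\star = 1/\kappa^\star(\Sigma)$. The natural choice is $V_0 := V$ and $V_m :\equiv 0$. These $V_m$ are trivially convex, and $V_0$ is continuously differentiable.

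\textbf{Condition 1.} For fixed $y_{-m}$, the map $x_m\mapsto V_0(x_m,y_{-m})$ is a one-dimensional quadratic with second derivative $(\Sigma\inv)_{mm}=D_{mm}$. It is therefore $L_m$-smooth with $L_m := D_{mm}>0$; positivity holds because $\Sigma\inv\succ0$.

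\textbf{Condition 2.} The function $x\mapsto \frac12 x\T\Sigma\inv x - \frac{\lambda^\star}{2}x\T D x$ is convex if and only if $\Sigma\inv \succeq \lambda^\star D$. Conjugating by $D^{-1/2}$, this is equivalent to $D^{-1/2}\Sigma\inv D^{-1/2}\succeq \lambda^\star I_n$. Since $D^{-1/2}\Sigma\inv D^{-1/2} = (D^{1/2}\Sigma D^{1/2})\inv$, the largest admissible value is
\begin{align*}
\lambda^\star = \eigmin{(D^{1/2}\Sigma D^{1/2})\inv} = 1/\eigmax{D^{1/2}\Sigma D^{1/2}} = 1/\kappa^\star(\Sigma) > 0.
\end{align*}
Hence $\kappa^\star = \kappa^\star(\Sigma)$, and Theorem \ref{thm:generalATE} yields the claimed inequality directly.

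The only step requiring care is this matrix-inversion identity together with the choice of the $L_m$. The identity $(D^{1/2}\Sigma D^{1/2})\inv = D^{-1/2}\Sigma\inv D^{-1/2}$ holds because $D$ is diagonal with positive entries. The $L_m$ must be taken as the exact conditional curvatures $D_{mm}$, because any larger $L_m$ would only shrink the admissible $\lambda^\star$. I will also remark that $\kappa^\star(\Sigma)\geq 1$: the diagonal of $D^{1/2}\Sigma D^{1/2}$ has entries $\Sigma_{ii}(\Sigma\inv)_{ii}\geq1$, and the largest eigenvalue of a symmetric matrix dominates its diagonal entries. Thus the constant is a valid ATE constant in the sense of Definition \ref{def:ATE}.
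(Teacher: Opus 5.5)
Your proposal is correct and matches what the paper does. The paper gives no separate proof: it cites Lemma 3.10 of \cite{ascolani:lavenant:zanella2026} and describes the lemma as obtained from Theorem \ref{thm:generalATE} by verifying Assumption \ref{ass:strongcondition} for the Gaussian potential. Your computation carries out exactly that verification, with single-coordinate blocks, $L_m = (\Sigma\inv)_{mm}$ and $\lambda^\star = 1/\kappa^\star(\Sigma)$, and implicitly uses $\Sigma \succ 0$ as the paper does.
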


While \cite{ascolani:lavenant:zanella2026} discuss the implications of their work on mixing times of the Gibbs sampler and their relation to coordinate-wise algorithms in optimization, they do not highlight the consequences for Hamming-Lipschitz concentration. Corollary \ref{cor:mcdiarmidsgaussian} below summarizes the important Gaussian case obtained by plugging the $\kappa^\star(\Sigma)$-ATE into Theorem \ref{thm:McDiarmidsATE}. This is the inequality we refer to as \textit{Gaussian Hamming-Lipschitz concentration} and which we believe to have many potential applications across mathematical statistics, learning theory and theoretical computer science. 

\begin{corollary}{Gaussian Hamming-Lipschitz Concentration}
    \label{cor:mcdiarmidsgaussian}

    Let $X \sim \mathcal N(\mu, \Sigma)$ on $\R^n$ with $\Sigma \in \R^{n \times n}$. Assume $f : \R^n \to \R$ fulfills the bounded differences property with constants $c = (c_1,...,c_n)\T$. Then, with $\kappa^\star(\Sigma) := \eigmax{D^{1/2} \Sigma D^{1/2}}$ where $D := \diagonal{\Sigma\inv}$, 
    \begin{align*}
        \P\Parent{|f(X) - \E[f(X)]| \geq t} 
        \leq 2\exp\Parent{-\frac{2t^2}{\kappa^\star(\Sigma) \twonorm{c}^2}}
        \quad \text{for all $t > 0$}. 
    \end{align*}
\end{corollary}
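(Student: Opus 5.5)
This corollary is a direct composition of two earlier results, so I will simply chain them. First, by Lemma \ref{lem:ATEtightgauss}, the Gaussian measure $\mathcal N(\mu,\Sigma)$ on $\R^n$ satisfies approximate tensorization of entropy with constant $\kappa^\star(\Sigma) = \eigmax{D^{1/2}\Sigma D^{1/2}}$, where $D = \diagonal{\Sigma\inv}$. Second, Theorem \ref{thm:McDiarmidsATE} applies to any measure $\nu$ on $\mathcal X^n$ with $\kappa$-ATE and any $f$ with the bounded differences property. Taking $\nu = \mathcal N(\mu,\Sigma)$, $\mathcal X = \R$ and $\kappa = \kappa^\star(\Sigma)$ yields the stated bound $2\exp(-2t^2/(\kappa^\star(\Sigma)\twonorm{c}^2))$ for all $t>0$.

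Before chaining, I check the side conditions.

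\textbf{Nondegeneracy.} I implicitly assume $\Sigma \succ 0$. This is needed for $\Sigma\inv$ and hence $D$ to exist, and it is the setting in which Lemma \ref{lem:ATEtightgauss} is stated.

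\textbf{$\kappa^\star(\Sigma) \geq 1$.} Definition \ref{def:ATE} requires $\kappa \geq 1$. The diagonal entries of $D^{1/2}\Sigma D^{1/2}$ are $(\Sigma\inv)_{ii}\Sigma_{ii} \geq 1$, by the Cauchy--Schwarz inequality or equivalently the Schur complement. Since the largest eigenvalue dominates every diagonal entry, $\kappa^\star(\Sigma) \geq 1$.

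\textbf{Integrability.} The test function $\exp(\lambda f)$ used in the proof of Theorem \ref{thm:McDiarmidsATE} must have finite entropy. Bounded differences make $f$ finite-valued with $|f(x)-f(y)| \leq \twonorm{c}_1$, where $\twonorm{c}_1$ here denotes $\sum_i c_i$. Hence $f$ is bounded up to an additive constant, all exponential moments are finite, and the entropies are well defined. Measurability of $f$ is assumed implicitly, as throughout the paper.

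The only step needing any care is confirming that Lemma \ref{lem:ATEtightgauss} is stated in exactly the form of Definition \ref{def:ATE}. Its conditional measures must be $\mathcal N(\cdot\mid X_{-i})$ on the $i$-th coordinate, and the expectation must be over $X\sim\mathcal N$. These match verbatim, so no additional argument is needed and the corollary follows immediately.
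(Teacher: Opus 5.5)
Your proposal is correct and follows the paper's route exactly: the paper gives no separate proof and obtains the corollary by plugging the $\kappa^\star(\Sigma)$-ATE of Lemma \ref{lem:ATEtightgauss} into Theorem \ref{thm:McDiarmidsATE}. Your side checks are all valid and make explicit what the paper leaves implicit. These are $\Sigma \succ 0$, $\kappa^\star(\Sigma) \geq 1$ via $(\Sigma\inv)_{ii}\Sigma_{ii} \geq 1$, and boundedness of $f$ up to an additive constant, which keeps the entropies of $\exp(\lambda f)$ finite.
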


\begin{remark}
    Note again that $\kappa^\star(\Sigma) \leq \kappa(\Sigma)$ and equality holds only in the case when $\Sigma = cI_n$ for $c > 0$. Importantly, $\kappa^\star(\Sigma)$ can indeed be infinitely smaller as can be seen in the following example: 
    \begin{align*}
        \Sigma 
        := \begin{pmatrix}
            s & 0 \\
            0 & 1
        \end{pmatrix}
        \quad \text{where $s > 1$}. 
    \end{align*}

    Then, $\kappa(\Sigma) = s$ but $\kappa^\star(\Sigma) = 1$ and hence as $s \to \infty$ the condition number diverges. 
\end{remark}

\section{Applications}
\label{sec:applications}

McDiarmid's inequality has many applications in combinatorial problems as presented in \cite{mcdiarmid1998}, \cite[Chapter 3.2]{boucheron:lugosi:massart2013} or \cite[Example 2.24]{wainwright2019}. Examples of classical statistical problems where the inequality can be applied are the concentration of U-statistics or the $L_1$-norm error of kernel density estimators (\cite[Example 2.23, Exercise 2.15]{wainwright2019}) as well as generalization bounds for cross-validation (See e.g., \cite{celisse:mary2018, lei2025}). In mathematical statistics and learning theory McDiarmid's is used to establish uniform laws of large numbers via Rademacher complexities as in \cite[Theorem 4.10]{wainwright2019} or generalization bounds through algorithmic stability (\cite{bousquet:elisseeff2002, rakhlin:mukherjee:poggio2005}. 

Here, we present three examples where McDiarmid's under dependence (Theorem \ref{thm:McDiarmidsATE}) can be applied for simple but very different problems. We thereby do not try to be exhaustive but want to provide examples that might spark the imagination of readers to come up with applications of Gaussian Hamming-Lipschitz concentration in their own fields of expertise. 

\subsection{Sub-Gaussianity of Sign-Quantized Vectors}

The following question was pitched to us by Simone Bombari: 

\begin{center}
    \textit{When does the random vector $\operatorname{sign}(X) \in \curly{\pm 1}^n$ where $X \sim \mathcal N(\mu, \Sigma)$ with $\mu \in \R^n$ and $\Sigma \in \R^{n\times n}$ fulfill dimension-free sub-Gaussian concentration in the sense of Definition \ref{def:subgaussianvector}?}
\end{center}

\begin{definition}{Sub-Gaussian Vector}
    \label{def:subgaussianvector}

    $X \in \R^n$ is a $\sigma^2$-sub-Gaussian vector if there exists $\sigma^2 > 0$ \st{}
    \begin{align*}
        \E\Bracket{\exp\Parent{u\T X - \E[u\T X]}}
        \leq \exp\Parent{\frac{\sigma^2\twonorm{u}^2}{2}} 
        \quad \text{for all $u \in \R^n$}. 
    \end{align*}
\end{definition}

Besides being an interesting exercise in high-dimensional probability, this question has applications in the analysis of deep neural networks via Neural Tangent Kernels (See e.g., \cite{bombari:amani:mohammad:mondelli2022}). 

When $\Sigma = I_n$ the question can easily be answered using McDiarmid's inequality, but if $\Sigma \not\asymp I_n$ the classical concentration inequalities including Gaussian Lipschitz concentration fail to apply directly. However, Theorem \ref{thm:McDiarmidsATE} immediately provides a strong answer for measures fulfilling ATE and thus well-conditioned multivariate Gaussians and strongly log-concave and log-smooth distributions. 

\begin{corollary}{Sub-Gaussianity of Sign-Quantized Vectors under ATE}

    Let $X \sim \nu$ and $\nu$ on $\R^n$ fulfill $\kappa$-ATE. Then, $\operatorname{sign}(X) \in \curly{\pm1}^n$ is $\kappa$-sub-Gaussian. 
    
\end{corollary}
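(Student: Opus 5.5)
Fix $u \in \R^n$ and consider $f(x) := u\T \operatorname{sign}(x) = \sum_{i=1}^n u_i \operatorname{sign}(x_i)$. Changing only the $i$-th coordinate of $x$ changes $f$ by at most $|u_i| \cdot |\operatorname{sign}(x_i) - \operatorname{sign}(y)| \leq 2|u_i|$. So $f$ has the bounded differences property with constants $c_i = 2|u_i|$, and $\twonorm{c}^2 = 4\twonorm{u}^2$. Here we fix a convention for $\operatorname{sign}(0)$, say $\operatorname{sign}(0)\in\curly{\pm1}$, so that the map takes values in $\curly{\pm 1}^n$. If $\nu$ has a density, this choice is immaterial almost surely.

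The aim is the moment generating function bound of Definition \ref{def:subgaussianvector}, not a tail bound. Hence I will not quote the final statement of Theorem \ref{thm:McDiarmidsATE}; instead I reuse its intermediate step. The proof of Theorem \ref{thm:McDiarmidsATE} combines $\kappa$-ATE with Corollary \ref{cor:hoeffdingsforherbst} applied conditionally on $X_{-i}$, and gives
\begin{align*}
    \ent[\nu]{\exp(\lambda f)} \leq \frac{\lambda^2 \kappa \twonorm{c}^2}{8} \E[\exp(\lambda f(X))] \quad \text{for all } \lambda > 0 .
\end{align*}
Lemma \ref{lem:herbstargument} with $v = \kappa\twonorm{c}^2/4$ then yields
\begin{align*}
    \E[\exp(\lambda(f(X) - \E f(X)))] \leq \exp\Parent{\lambda^2 \kappa \twonorm{c}^2/8}.
\end{align*}
Plugging in $\twonorm{c}^2 = 4\twonorm{u}^2$ gives $\exp(\lambda^2\kappa\twonorm{u}^2/2)$. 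Setting $\lambda = 1$ gives exactly the definition of a $\kappa$-sub-Gaussian vector, with $\sigma^2 = \kappa$.

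The only other point is that Definition \ref{def:subgaussianvector} requires all $u \in \R^n$, while the Herbst step above is stated for $\lambda > 0$. Since $u$ is arbitrary, I apply the argument directly to $f_u$ with $\lambda = 1$; negative directions are covered by replacing $u$ with $-u$. Hence no two-sided tail argument is needed. The case $u = 0$ is trivial.

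I expect no real obstacle. The one step to state carefully is that the proof of Theorem \ref{thm:McDiarmidsATE} establishes the MGF bound, not merely the tail bound. Keeping track of the constants ($c_i = 2|u_i|$ rather than $|u_i|$, which produces exactly $\kappa$ and not $\kappa/4$) is the main bookkeeping.
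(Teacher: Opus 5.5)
Your proposal is correct and matches the paper's proof. The paper also takes $f(x) = u\T \operatorname{sign}(x)$ with bounded-difference constants $c_k = 2\abs{u_k}$, uses the moment generating function bound from the proof of Theorem \ref{thm:McDiarmidsATE} (not the tail bound) to get $\exp(\lambda^2\kappa\twonorm{u}^2/2)$, and sets $\lambda = 1$; your remarks on the convention for $\operatorname{sign}(0)$ and on covering negative directions via $u \mapsto -u$ only make explicit what the paper leaves implicit.
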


\begin{remark}

    Recall that if $\nu$ is $\alpha$-strongly log-concave and $\beta$-log-smooth, Assumption \ref{ass:strongcondition} holds with $\kappa^\star \leq \kappa := \beta/\alpha$ by Lemma \ref{lem:weakcondition} and hence $\nu$ fulfills $\kappa$-ATE by Theorem \ref{thm:generalATE}. For a Gaussian $X \sim \mathcal N(\mu,\Sigma)$ this suggests that $\operatorname{sign}(X)$ is at least $\kappa(\Sigma)$-sub-Gaussian. Relying instead on Lemma \ref{lem:ATEtightgauss} improves this to $\kappa^\star(\Sigma)$-sub-Gaussianity with $\kappa^\star(\Sigma) := \eigmax{D^{1/2} \Sigma D^{1/2}}$ where $D := \diagonal{\Sigma\inv}$. 
    
\end{remark}

\begin{proof}

    We prove the statement by an application of McDiarmid's inequality. 

    Define $f(x) := u\T\operatorname{sign}(x)$ for all $u \in \R^n$. Then, for all $x, y \in \R^n$ with Hamming distance $d_H(x,y) \leq 1$ where the $k$-th entries differ, it holds that 
    \begin{align*}
        \Abs{f(x)-f(y)}
        &= \Abs{u\T (\operatorname{sign}(x)-\operatorname{sign}(y))}
        = \Abs{\sum_{i=1}^n u_i (\operatorname{sign}(x_i) - \operatorname{sign}(y_i))} \\
        &= \abs{u_k (\operatorname{sign}(x_k) - \operatorname{sign}(y_k))}
        \leq \abs{u_k} \abs{\operatorname{sign}(x_k) - \operatorname{sign}(y_k)}
        \leq 2 \abs{u_k} =: c_k.
    \end{align*}

    This means that $f$ fulfills the bounded difference condition with $\twonorm{c}^2 = 4 \twonorm{u}^2$. Theorem \ref{thm:McDiarmidsATE} yields
    \begin{align*}
        \E[\exp(\lambda (u\T \operatorname{sign}(X)-\E[u\T \operatorname{sign}(X)])] 
        &\leq \exp\Parent{\frac{\lambda^2 \kappa \twonorm{u}^2}{2}}
        \quad \text{for all $\lambda > 0$}. 
    \end{align*}

    Choosing $\lambda = 1$ recovers the statement.   
\end{proof}   

During the preparation of this manuscript \cite{zou:vershynin2026} provided an alternative proof for the sub-Gaussianity of $\operatorname{sign}(X)$ where $X \sim \mathcal N(0, \Sigma)$ with variance proxy of the order $O(\kappa(\Sigma))$. Their elementary proof decomposes $X$ into the sum of two Gaussians and exploits Gaussian Lipschitz concentration after smoothing the $\operatorname{sign}$-function with one of them. Hence, it is restricted to Gaussians and happens to be less tight in the constants that are involved. 

\subsection{Erdős-Rényi Graphs under Dependence}

Erdős-Rényi graphs (ERG) are a powerful model in graph theory and complex networks. An ERG is a graph with $n$ nodes where two edges are connected with success probability $p \in (0,1)$, independently. Many statistics of interest defined on such graphs fulfill variations of the bounded difference property and thus McDiarmid's inequality and versions thereof are a handy tool for their analysis. In the following, we show how ERGs can be parametrized by Gaussian instead of Bernoulli random variables, which let's us analyze them using Corollary \ref{cor:mcdiarmidsgaussian}. 

\begin{definition}{Dependent Erdős-Rényi Graph}

    Let $X \sim \mathcal N(\mu, \Sigma)$ with $\mu \in \R^{N}$ and $\Sigma \in \R^{N}$ where $N = \binom{n}{2}$. Index the mean $\mu$, covariance $\Sigma$ and random vector $X$ by tuples $(i, j) \in [n]^2$ where $i < j$. We define a dependent Erdős-Rényi graph as a graph $G_{\mu, \Sigma}(X) := (V, E(X))$ where $V = [n]$ and $(i,j) \in E(X)$ if $\indicator{X_{(i,j)} \geq 0}$ .
    
\end{definition}

\begin{corollary}{Control on Marginal Edge Probabilities}

    For $\mu^p := \Phi\inv(p) \cdot \diagonal{\Sigma}^{1/2}$ where $p \in (0,1)$ and $\Phi : \R \to [0,1]$ is the standard Gaussian CDF,
    \begin{align*}
        \P((i,j) \in E(X)) = p
        \quad \text{for all $(i,j) \in [n]^2$ where $i<j$}. 
    \end{align*}
    
\end{corollary}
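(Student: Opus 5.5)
The plan is to reduce the claim to a one-dimensional Gaussian computation, since the event $(i,j) \in E(X)$ involves only the single coordinate $X_{(i,j)}$. Dependence between edges plays no role here, because only the marginal law of each coordinate matters.

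\textbf{Step 1: the marginal law.} Fix a tuple $k = (i,j)$ with $i<j$. Linear projections of Gaussian vectors are Gaussian, so $X_k \sim \mathcal N(\mu^p_k, \Sigma_{kk})$. Read $\diagonal{\Sigma}^{1/2}$ as the vector of standard deviations $\sigma_k := \Sigma_{kk}^{1/2}$. The mean is then $\mu^p_k = \Phi\inv(p)\,\sigma_k$. Here $\Sigma_{kk} > 0$, as implicitly required for $\sigma_k$ to be a valid scale (and automatic when $\Sigma \succ 0$). Hence we can write $X_k = \sigma_k(\Phi\inv(p) + Z)$ with $Z \sim \mathcal N(0,1)$.

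\textbf{Step 2: compute the edge probability.} Since $\sigma_k > 0$, we get
\begin{align*}
    \P((i,j) \in E(X)) = \P(X_k \geq 0) = \P(Z \geq -\Phi\inv(p)) = 1 - \Phi(-\Phi\inv(p)) = \Phi(\Phi\inv(p)) = p.
\end{align*}
This uses the symmetry $1-\Phi(-z) = \Phi(z)$ of the standard Gaussian CDF. It also uses that $\Phi$ is a continuous strictly increasing bijection $\R \to (0,1)$, so $\Phi\inv(p)$ is well defined for $p \in (0,1)$. Continuity of $\Phi$ also means the boundary event $\{X_k = 0\}$ has probability zero, so $\geq$ versus $>$ is irrelevant.

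\textbf{Main obstacle.} There is no real mathematical difficulty. The only care needed is notational: $\diagonal{\Sigma}^{1/2}$ must be interpreted as a vector in $\R^N$ rather than a matrix, and the index set must be the pairs $i<j$.
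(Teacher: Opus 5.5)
Your proof is correct and follows the paper's argument: standardize the single coordinate $X_{(i,j)}$, then use the symmetry of the standard Gaussian to turn $\P(Z \geq -\Phi\inv(p))$ into $\Phi(\Phi\inv(p)) = p$. Your remarks that $\diagonal{\Sigma}^{1/2}$ is read as a vector of standard deviations and that $\Sigma_{kk} > 0$ is needed make explicit what the paper leaves implicit.
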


\begin{proof}

    Since $Z \overset{d}{=} -Z$ for $Z \sim \mathcal N(0,1)$ and $\mu^p_{(i,j)} = \Phi\inv(p) \sqrt{\Sigma_{(i,j), (i,j)}}$ it holds that
    \begin{align*}
        \P((i,j) \in E(X))
        &= \P(X_{(i,j)} \geq 0) 
        = \P\Parent{\frac{X_{(i,j)} - \mu_{(i,j)}}{\sqrt{\Sigma_{(i,j), (i,j)}}} \geq - \frac{\mu_{(i,j)}}{\sqrt{\Sigma_{(i,j), (i,j)}}}} 
        = \P\Parent{Z \geq - \frac{\mu_{(i,j)}}{\sqrt{\Sigma_{(i,j), (i,j)}}}} \\
        &= \P\Parent{Z \leq \frac{\mu_{(i,j)}}{\sqrt{\Sigma_{(i,j), (i,j)}}}}
        = \Phi\Parent{\frac{\mu_{(i,j)}}{\sqrt{\Sigma_{(i,j), (i,j)}}}}
        = \Phi\Parent{\Phi\inv(p)}
        = p. 
    \end{align*}
    
\end{proof}

\begin{remark}

    $G_{\mu, \Sigma}(X)$ is a standard ERG with edge probability $p$ if $\mu = \mu^p$ and $\Sigma = I_N$. 
    
\end{remark}

The statistic we are interested in in this expository treatment of ERGs is the maximum cut below. 

\begin{definition}{Maximum Cut}

    For a graph $G = (V,E)$ and with $S \sqcup S^c = [n]$ we define the maximum cut as
    \begin{align*}
        \operatorname{MaxCut}(G) 
        := \max_{S \subseteq V} \operatorname{Cut}_S(G)
        = \max_{S \subseteq V} \sum_{i<j}^n \indicator{(i,j) \in E : \cardinality{\curly{i,j} \cap S} = 1}.
    \end{align*}
\end{definition}

Since the maximum cut fulfills the bounded difference property with $c_i = 1$, we immediately obtain a concentration inequality for the statistic around its expectation. 

\begin{lemma}{Concentration of the Maximum Cut}

    For a fraction $\epsilon \in (0,1)$ the maximum cut of the dependent ERG $G_{\mu^p, \Sigma}(X)$ fulfills 
    \begin{align*}
        \P\Parent{\Abs{\operatorname{MaxCut}(G_{\mu^p, \Sigma}(X))- \E[\operatorname{MaxCut}(G_{\mu^p, \Sigma}(X))]} \geq \epsilon \binom{n}{2}}
        \leq 2\exp\Parent{-\frac{2 \epsilon^2 \binom{n}{2}}{\kappa^\star(\Sigma)}}, 
    \end{align*}

    where $\kappa^\star(\Sigma) := \eigmax{D^{1/2} \Sigma D^{1/2}}$ with $D := \diagonal{\Sigma\inv}$ is the ATE constant for $\Sigma$. 
    
\end{lemma}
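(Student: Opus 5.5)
We obtain the bound by applying Corollary \ref{cor:mcdiarmidsgaussian} (Gaussian Hamming-Lipschitz concentration) to the function
\begin{align*}
    f : \R^N \to \R, \quad f(x) := \operatorname{MaxCut}(G_{\mu^p, \Sigma}(x)), \qquad N = \binom{n}{2}.
\end{align*}
Here the random vector is $X \sim \mathcal N(\mu^p, \Sigma)$ on $\R^N$, indexed by pairs $(i,j)$ with $i<j$. The shift $\mu^p$ plays no role, because Lemma \ref{lem:ATEtightgauss} and hence Corollary \ref{cor:mcdiarmidsgaussian} hold for arbitrary means. So the only work is to verify the bounded differences property and then substitute the right value of $t$.

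\textbf{Step 1: bounded differences with $c_{(i,j)} = 1$.} Fix $x, x' \in \R^N$ that differ only in coordinate $(k,l)$. Then the edge sets $E(x)$ and $E(x')$ agree except possibly on the single edge $(k,l)$. For every fixed $S \subseteq V$ the cut value
\begin{align*}
    \operatorname{Cut}_S(G) = \sum_{i<j} \indicator{(i,j) \in E,\ \cardinality{\curly{i,j}\cap S} = 1}
\end{align*}
changes by at most one, since only the indicator for $(k,l)$ can change. Write $\operatorname{MaxCut}(G)$ for the maximum of $\operatorname{Cut}_S(G)$ over $S$. The elementary inequality $|\max_S a_S - \max_S b_S| \leq \max_S |a_S - b_S|$ then gives $|f(x) - f(x')| \leq 1$. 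Hence $f$ has bounded differences with all $c_{(i,j)} = 1$, so that $\twonorm{c}^2 = N = \binom{n}{2}$. Measurability of $f$ is clear: it is a maximum of finitely many finite sums of indicators of half-spaces.

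\textbf{Step 2: apply Corollary \ref{cor:mcdiarmidsgaussian} with $t = \epsilon \binom{n}{2}$.} This gives
\begin{align*}
    \P\Parent{|f(X) - \E[f(X)]| \geq \epsilon \tbinom{n}{2}}
    \leq 2\exp\Parent{-\frac{2\epsilon^2 \binom{n}{2}^2}{\kappa^\star(\Sigma)\binom{n}{2}}}
    = 2\exp\Parent{-\frac{2\epsilon^2 \binom{n}{2}}{\kappa^\star(\Sigma)}},
\end{align*}
which is exactly the claimed bound.

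There is no real obstacle here. The only step needing care is the max-of-differences argument in Step 1, namely that maximizing over $S$ preserves the one-coordinate Lipschitz property. The restriction $\epsilon \in (0,1)$ is only there because $f$ takes values in $[0, \binom{n}{2}]$, so larger $\epsilon$ makes the statement trivial.
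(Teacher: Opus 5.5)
Your proposal is correct and follows the paper's proof essentially verbatim. The paper also notes that changing one coordinate changes each $\operatorname{Cut}_S$ by at most one, bounds $\operatorname{MaxCut}(G)-\operatorname{MaxCut}(G')\leq \max_S(\operatorname{Cut}_S(G)-\operatorname{Cut}_S(G'))\leq 1$, and symmetrizes, which is your inequality $|\max_S a_S-\max_S b_S|\leq\max_S|a_S-b_S|$; it then applies Corollary \ref{cor:mcdiarmidsgaussian} with $\twonorm{c}^2=\binom{n}{2}$, and your substitution $t=\epsilon\binom{n}{2}$ spells out the final step the paper leaves implicit.
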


\begin{remark}

    If $\binom{n}{2}/\kappa^\star(\Sigma) \to \infty$ as $n \to \infty$ the maximum cut concentrates around its expectation. For $\kappa^\star(\Sigma) \asymp 1$ the rate of convergence is the same as for a standard ERG where $\Sigma = I_N$. 
    
\end{remark}

\begin{proof}

    For two graphs $G, G^\prime$ with edge sets $E, E^\prime \subseteq \curly{(i,j) \in [n]^2 : i < j}$ differing in edge $(u,v)$, 
    \begin{align*}
        \operatorname{MaxCut}(G) &- \operatorname{MaxCut}(G^\prime) \\
        &= \max_{S \subseteq V} \operatorname{Cut}_S(G) - \max_{S \subseteq V} \operatorname{Cut}_S(G^\prime)
        \leq \max_{S \subseteq V} \operatorname{Cut}_S(G) - \operatorname{Cut}_S(G^\prime) \\
        &= \max_{S \subseteq V} \sum_{i<j}^n \indicator{(i,j) \in E : \cardinality{\curly{i,j} \cap S} = 1} - \indicator{(i,j) \in E^\prime : \cardinality{\curly{i,j} \cap S} = 1} \\
        &= \indicator{(u,v) \in E : \cardinality{\curly{i,j} \cap S} = 1} - \indicator{(u,v) \in E^\prime : \cardinality{\curly{i,j} \cap S} = 1}
        \leq 1. 
    \end{align*}

    By symmetry the same holds with $G$ and $G^\prime$ swapped and thus $\operatorname{MaxCut}$ fulfills bounded differences with $c_{(i,j)} = 1$. For two $x, x^\prime \in \R^N$ with $d_H(x, x^\prime) \leq 1$, their $G_{\mu^p, \Sigma}(x)$ and $G_{\mu^p, \Sigma}(x^\prime)$ only differ in one edge and thus the property transfers to $x \mapsto \operatorname{MaxCut}(G_{\mu^p, \Sigma}(x))$. Hence, for all $t > 0$,
    \begin{align*}
        \P\Parent{\Abs{\operatorname{MaxCut}(G_{\mu^p, \Sigma}(X))- \E[\operatorname{MaxCut}(G_{\mu^p, \Sigma}(X))]} \geq t}
        \leq 2\exp\Parent{-\frac{2t^2}{\kappa^\star(\Sigma) \binom{n}{2}}}. 
    \end{align*}

    Here,  the inequality holds by Corollary \ref{cor:mcdiarmidsgaussian} and since $\twonorm{c}^2 = \binom{n}{2}$. 
\end{proof}

\subsection{DKW-type Inequality}

A fundamental question in empirical process theory is the convergence of the empirical cumulative distribution function (CDF) to its population counterpart. The Dvoretzky-Kiefer-Wolfowitz (DKW) inequality provides an answer to this in the case of \iid{} random variables where the rate of convergence is $1/\sqrt{n}$ (\cite{dvoretzky:kiefer:wolfowitz1956, massart1990}). In this application we consider the dependent setting and study the convergence of the empirical CDF towards its population version, the average marginal CDF as defined below. 

\begin{definition}{Cumulative Distribution Functions}
    \label{def:cdfs}

    Let $\nu$ be a probability measure on $\R^n$ and $X \sim \nu$. Respectively, define the empirical and average marginal cumulative distribution functions at $x \in \R$ as 
    \begin{align*}
        \hat F_n(x)
        := \frac{1}{n} \sum_{i=1}^n \indicator{X_i \leq x}, 
        \quad \text{and} \quad 
        \bar F(x)
        := \E[\hat F_n(x)]
        = \frac{1}{n} \sum_{i=1}^n \P(X_i \leq x). 
    \end{align*}

\end{definition}

\cite{bobkov:goetze2010} derived the following DKW-type inequality when the underlying joint measure $\nu$ fulfills a log-Sobolev inequality and the average marginal CDF is Lipschitz. 

\begin{theorem}{DKW-type Inequality under LSI (Theorem 1.2, \cite{bobkov:goetze2010})}
    \label{thm:DKWunderLSI}

    Let $X \sim \nu$ on $\R^n$ where $\nu$ is a probability measure fulfilling $\operatorname{LSI}(\rho)$. Assume that the average marginal cumulative distribution function $\bar F$ is $M$-Lipschitz. Then, for any $r > 0$,
    \begin{align*}
        \P\Parent{\sup_{x\in\R} |\bar F(x) - \hat F_n(x)| \geq r} 
        \leq \frac 4 r \exp\Parent{-\frac{2}{27}\frac{nr^3}{\rho M^2}}. 
    \end{align*}
\end{theorem}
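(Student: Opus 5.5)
The plan is to follow the standard route of \cite{bobkov:goetze2010}: reduce the supremum over $x\in\R$ to a maximum over a finite grid, and bound each grid point by a concentration inequality for $\hat F_n(x)$ under $\operatorname{LSI}(\rho)$. The obstacle is that $x\mapsto\indicator{X_i\le x}$ is not Lipschitz in $X$, so Gaussian/log-Sobolev Lipschitz concentration cannot be applied to $\hat F_n(x)$ directly. I would replace the indicator by a piecewise linear smoothing. For $h>0$ set $\phi_{x,h}(s)=1$ for $s\le x$, $\phi_{x,h}(s)=0$ for $s\ge x+h$, and linear in between. Then $\indicator{s\le x}\le\phi_{x,h}(s)\le\indicator{s\le x+h}$.

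First, the smoothed statistic $G_{x,h}(X):=\frac1n\sum_i\phi_{x,h}(X_i)$ is $\frac{1}{h\sqrt n}$-Lipschitz with respect to Euclidean distance, by Cauchy--Schwarz. The LSI (Herbst argument, as in Lemma \ref{lem:herbstargument}) then gives
\begin{align*}
\P\Parent{|G_{x,h}-\E G_{x,h}|\ge s}\le 2\exp\Parent{-\frac{nh^2s^2}{2\rho}}.
\end{align*}
Second, I would sandwich $\hat F_n(x)$ between $G_{x-h,h}$ and $G_{x,h}$. Their means lie in $[\bar F(x-h),\bar F(x+h)]$, and by $M$-Lipschitzness of $\bar F$ this interval has width at most $2Mh$. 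Hence
\begin{align*}
|\hat F_n(x)-\bar F(x)|\le \max_\pm|G_{\cdot,h}-\E G_{\cdot,h}|+Mh.
\end{align*}

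Third, I would pass to the supremum by monotonicity. Both $\hat F_n$ and $\bar F$ are nondecreasing and take values in $[0,1]$. I would choose a grid $x_1<\dots<x_k$ with $\bar F(x_{j+1})-\bar F(x_j)\le r/3$, which needs $k\approx 3/r$ points. On each cell, the deviation is controlled by the deviations at the endpoints plus $r/3$.

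Finally, I would balance the parameters: take $Mh=r/3$ and $s=r/3$, and apply a union bound over the $O(1/r)$ grid points (doubled for the two sandwiching functions). The exponent becomes of order $\frac{n h^2 r^2}{\rho}\asymp\frac{nr^4}{\rho M^2}$. This is where I expect the main difficulty. Recovering the stated $r^3$ exponent and the constants $4/r$ and $2/27$ requires a sharper choice than this naive balancing.

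What I would try first is to exploit the fact that only the one-sided deviations matter at each grid point. I would use the upper smoothing for the upper tail and the lower smoothing for the lower tail, and let the grid spacing absorb part of the error. I would also optimize $h$ jointly with $s$, rather than splitting $r$ into equal thirds. If that fails to give $r^3$, I would instead work with the smoothed functional $\sup_x$ of an averaged CDF and reuse the Lipschitz bound together with an interpolation argument as in \cite{bobkov:goetze2010}. In either case, the ingredients are the LSI Lipschitz concentration, the sandwich by smoothed indicators, and the finite grid with a union bound.
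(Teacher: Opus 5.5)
Your smoothing, sandwich and grid steps are correct, but they prove a strictly weaker inequality, and none of the remedies in your last paragraph can fix that. With the worst-case Lipschitz constant $1/(h\sqrt n)$, the pointwise exponent is $nh^2s^2/(2\rho)$ under the budget $Mh+s\le r$, minus whatever the grid costs. Since $\max\{h^2s^2 : Mh+s=r\}=r^4/(16M^2)$, every split of $r$ between bias, deviation and grid spacing, one-sided or two-sided, yields at best $\exp(-c\,nr^4/(\rho M^2))$, i.e.\ an $n^{-1/4}$ rate. The window average $\frac1h\int_x^{x+h}\hat F_n(y)\,dy$ you fall back on is exactly your $G_{x,h}$. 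Its supremum over $x$ is again $\frac{1}{h\sqrt n}$-Lipschitz, so it hits the same wall. The paper itself gives no proof of this theorem; it only quotes Bobkov--G\"otze.

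The missing idea is that $\sup|\nabla g|$ is far too pessimistic for the linear statistic $g=\frac1n\sum_i\phi_{x,h}(X_i)$.

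\begin{itemize}
\item \emph{What you lose.} One has $|\nabla g|^2=\frac{1}{n^2h^2}\#\{i : X_i\in(x,x+h)\}$. Its mean is $\frac1n\int(\phi_{x,h}')^2\,d\bar F\le\frac{M}{nh}$, because $\bar F$ puts mass at most $Mh$ on the window. This is a second, essential use of the $M$-Lipschitz hypothesis; you only use it for the bias.
\item \emph{How to use it under LSI.} Work with the entropy inequality $\ent[\nu]{e^{\lambda g}}\le\frac{\rho\lambda^2}{2}\E[|\nabla g|^2e^{\lambda g}]$ rather than with $\sup|\nabla g|$. You then also need to control the window count under the tilted measure.
\item \emph{Controlling the window count.} Take a bump $v$ equal to $1$ on $(x,x+h)$ with $|(\sqrt v)'|\le 1/h$. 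Then $X\mapsto(\sum_i v(X_i))^{1/2}$ is $1/h$-Lipschitz. Hence the count exceeds a large constant times $nMh$ only with probability $e^{-c\,n(Mh)^3/(\rho M^2)}$, which is already the $r^3$ scale.
\item \emph{Resulting exponent.} A sub-Gaussian bound with variance proxy $\rho M/(nh)$ gives the exponent $\frac{n\,s^2(Mh)}{2\rho M^2}$. Since $\max_{s+Mh=r}\frac12 s^2(Mh)=\frac{2}{27}r^3$, attained at $s=\frac{2r}{3}$ and $Mh=\frac r3$, this is consistent with both the power $r^3$ and the constant $\frac{2}{27}$ in the statement.
\end{itemize}
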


We complement this result for measures fulfilling ATE through an application of Theorem \ref{thm:McDiarmidsATE} combined with a standard bracketing argument. 

\begin{theorem}{DKW-type Inequality under ATE}
    \label{thm:DKWunderATE}

    Let $X \sim \nu$ on $\R^n$ where $\nu$ is a probability measure fulfilling $\kappa$-ATE. Assume that the average marginal cumulative distribution function $\bar F$ is continuous. Then, for any $r > 0$,
    \begin{align*}
        \P\Parent{\sup_{x\in\R} |\bar F(x) - \hat F_n(x)| \geq r} 
        \leq \frac 4 r \exp\Parent{-\frac{nr^2}{2\kappa}}. 
    \end{align*}
\end{theorem}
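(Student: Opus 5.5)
The proof combines a bracketing (grid) argument, a union bound, and Theorem \ref{thm:McDiarmidsATE} applied pointwise. The pointwise step is straightforward. For fixed $x \in \R$, the map $y \mapsto \hat F_n(x) = \frac1n \sum_{i=1}^n \indicator{y_i \leq x}$ has bounded differences with $c_i = 1/n$, so $\twonorm{c}^2 = 1/n$. Theorem \ref{thm:McDiarmidsATE} then gives
\begin{align*}
    \P\Parent{|\hat F_n(x) - \bar F(x)| \geq s} \leq 2\exp\Parent{-\frac{2 n s^2}{\kappa}} \quad \text{for all } s>0 .
\end{align*}
The same bound holds for the left limits $\hat F_n(x^-) = \frac1n\sum_i \indicator{X_i < x}$ and their means $\bar F(x^-)$, since these also have bounded differences $1/n$. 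Continuity of $\bar F$ in fact gives $\bar F(x^-) = \bar F(x)$.

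To pass from finitely many points to the supremum, I use bracketing. Let $k := \lceil 2/r \rceil$. Since $\bar F$ is a continuous CDF-type function that is nondecreasing with limits $0$ and $1$, the intermediate value theorem yields points $-\infty = x_0 < x_1 < \dots < x_{k-1} < x_k = +\infty$ with $\bar F(x_j) = j/k$. For $x \in [x_{j-1}, x_j)$, monotonicity of both $\hat F_n$ and $\bar F$ gives
\begin{align*}
    \hat F_n(x) - \bar F(x) \leq \hat F_n(x_j^-) - \bar F(x_{j-1}) = \hat F_n(x_j^-) - \bar F(x_j) + 1/k ,
\end{align*}
and similarly
\begin{align*}
    \bar F(x) - \hat F_n(x) \leq \bar F(x_j) - \hat F_n(x_{j-1}) = \bar F(x_{j-1}) - \hat F_n(x_{j-1}) + 1/k .
\end{align*}
Because $1/k \leq r/2$, the event $\sup_x |\bar F(x) - \hat F_n(x)| \geq r$ implies that at some interior grid point one of the finitely many deviations is at least $r/2$. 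The endpoints are trivial, since both functions equal $0$ or $1$ there.

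A union bound over the grid, using the pointwise bound with $s = r/2$, then gives a probability of at most roughly $2k \cdot \exp(-n r^2/(2\kappa))$. The exponent $2n(r/2)^2/\kappa = nr^2/(2\kappa)$ matches the claimed one exactly. With $k \approx 2/r$, the prefactor is of order $4/r$.

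The main obstacle is the bookkeeping for the prefactor. The two one-sided deviations at each grid point must be counted together with the two-sided McDiarmid bound without double-counting, so that the constant comes out as exactly $4/r$. I would use one-sided bounds, each of the form $\exp(-nr^2/(2\kappa))$, one per direction per grid point: the upper deviations at $x_j^-$ for $j=1,\dots,k-1$ and the lower deviations at $x_{j}$ for $j=1,\dots,k-1$. This gives $2(k-1) \leq 4/r$ terms when $k = \lceil 2/r\rceil$, since $k-1 < 2/r$. If $r > 1$ the probability is $0$, so the bound holds trivially.
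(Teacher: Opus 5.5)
Your proposal is correct and follows essentially the same route as the paper: pointwise McDiarmid under ATE with $c_i = 1/n$, a grid at the levels $j/N$ of $\bar F$ with $N = \lceil 2/r \rceil$, deviation level $t = r/2$, and a union bound over the $N-1$ interior points giving the prefactor $2(N-1) < 4/r$. The only difference is cosmetic. You use half-open brackets $[x_{j-1}, x_j)$ with left limits and one-sided bounds, whereas the paper uses right-closed brackets $(a_{j-1}, a_j]$ with the two-sided bound at each interior point, which avoids left limits altogether. The left limits are redundant anyway: continuity of $\bar F$ forces every marginal to be atomless, so $\hat F_n(x_j^-) = \hat F_n(x_j)$ almost surely.
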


\begin{remark}

    Theorem \ref{thm:DKWunderATE} improves Theorem \ref{thm:DKWunderLSI} in the dependence on $r^2$ versus $r^3$. Hence, it achieves the expected $1/\sqrt n$-rate of convergence up to logarithms as long as $\kappa \asymp 1$. While it is unclear which measures exactly fulfill ATE, the class of $\alpha$-strongly log-concave and $\beta$-log-smooth measures is an intersection between the ones fulfilling ATE and a log-Sobolev inequality (See Theorem \ref{thm:bakryemery}).
    
\end{remark}

\begin{proof}

    The proof combines our McDiarmid's inequality with a standard bracketing argument as can be found in the proof of \cite[Theorem 1.2]{bobkov:goetze2010} or \cite[Theorem 4.1.1]{vandegeer2020}. 

    For all $X, X^\prime \in \R^n$ \st{} $d_H(X,X^\prime) \leq 1$, let $k$ index the coordinate in which they differ. Define the corresponding empirical CDFs $\hat F_n(x)$ and $\hat F_n^\prime(x)$ for a fixed $x \in \R$. Then, we have 
    \begin{align*}
        \Abs{\hat F_n(x) - \hat F_n^\prime(x)}
        &= \Abs{\frac{1}{n} \sum_{i=1}^n \indicator{X_i \leq x} - \frac{1}{n} \sum_{i=1}^n \indicator{X_i^\prime \leq x}} \\
        &= \frac{1}{n} \Abs{\sum_{i=1}^n \indicator{X_i \leq x} -  \indicator{X_i^\prime \leq x}} \\
        &= \frac{1}{n} \Abs{\indicator{X_k \leq x} -  \indicator{X_k^\prime \leq x}}
        \leq \frac 1 n 
        =: c_k. 
    \end{align*}

    Thus, as a function of $X$, $\hat F_n(x)$ fulfills the bounded difference property with $\twonorm{c} = 1/n \cdot \twonorm{\one} = 1/\sqrt{n}$. Applying McDiarmid's inequality then yields that for all $t > 0$ we have
    \begin{align*}
        \P\Parent{\Abs{\hat F_n(x) - \bar F(x)} \geq t}
        = \P\Parent{\Abs{\hat F_n(x) - \E\bracket{\hat F_n(x)}} \geq t}
        \leq 2\exp\Parent{-\frac{2t^2}{\kappa \twonorm{c}^2}}
        = 2\exp\Parent{-\frac{2nt^2}{\kappa}}. 
    \end{align*}

    We translate this pointwise guarantee into a uniform one using bracketing. As $\bar F$ is continuous and increasing, for all $N \in \N$ we can choose $N+1$ anchor points
    \begin{align*}
        -\infty = a_0 < a_1 < ... < a_N = \infty
        \quad \text{\st{}} \quad 
        \bar F(a_j) - \bar F(a_{j-1}) = \frac 1 N
        \quad \text{for all $j \in [N]$}. 
    \end{align*}

    For all $x \in (a_{j-1}, a_j]$ we have $(-\infty, a_{j-1}] \subset (-\infty, x] \subset (-\infty, a_j]$ and thus it holds that 
    \begin{align*}
        \indicator{X_i \leq a_{j-1}}
        \leq \indicator{X_i \leq x}
        \leq \indicator{X_i \leq a_j} 
        \quad \text{for all $i \in [n]$}. 
    \end{align*}

    Through summing these over $i \in [n]$, for the empirical and average marginal CDF we have
    \begin{align*}
        \hat F_n(a_{j-1})
        \leq \hat F_n(x) 
        \leq \hat F_n(a_j)
        \quad \text{and} \quad 
        \bar F(a_{j-1}) 
        \leq \bar F(x) 
        \leq \bar F(a_j). 
    \end{align*}

    Thus, the following upper and lower bounds hold
    \begin{align*}
        \hat F_n(x) - \bar F(x)
        &\leq \hat F_n(a_j) - \bar F(x)
        \leq \hat F_n(a_j) - \bar F(a_{j-1})
        = \hat F_n(a_j) - \bar F(a_j) + 1/N,  \\
        \hat F_n(x) - \bar F(x)
        &\geq \hat F_n(a_{j-1}) - \bar F(x)
        \geq \hat F_n(a_{j-1}) - \bar F(a_j)
        = \hat F_n(a_{j-1}) - \bar F(a_{j-1}) - 1/N. 
    \end{align*}

    Taking absolute values and the maximum over $j \in [N]$ this yields that for $x \in R$, 
    \begin{align*}
        - \max_{j \in \curly{0,...,N}} \Abs{\hat F_n(a_j) - \bar F(a_j)} - \frac 1 N
        \leq \hat F_n(x) - \bar F(x) 
        \leq \max_{j \in \curly{0,...,N}} \Abs{\hat F_n(a_j) - \bar F(a_j)} + \frac 1 N. 
    \end{align*}

    Since $-C \leq y \leq C \Leftrightarrow \abs{y} \leq C$ and because the resulting \rhs{} is independent of $x$, 
    \begin{align*}
        \sup_{x \in \R} \Abs{\hat F_n(x) - \bar F(x) }
        \leq \max_{j \in \curly{0,...,N}} \Abs{\hat F_n(a_j) - \bar F(a_j)} + \frac 1 N. 
    \end{align*}

    Note here, that $\bar F(a_0) = \hat F_n(a_0) = 0$ and $\bar F(a_N) = \hat F_n(a_N) = 1$ and thus we only have to control $N-1$ terms in the maximum. It remains to do this using a union bound and the pointwise inequality: 
    \begin{align*}
        \P\Parent{\max_{j \in \curly{0,...,N}} \Abs{\hat F_n(a_j) - \bar F(a_j)} \geq t}
        \leq 2 (N-1) \exp\Parent{-\frac{2nt^2}{\kappa}}
        \leq \frac 4 r \exp\Parent{-\frac{2nt^2}{\kappa}}. 
    \end{align*}

    Choosing $N = \lceil \frac 2 r \rceil$ for which $\frac 2 r \leq N \leq 1 + \frac 2 r$ and $t = r/2$ then recovers the statement. 
\end{proof}

\begin{remark}

    For Gaussian measures the derivative of the average marginal CDF is
    \begin{align*}
        \frac{d}{dx} \bar F(x) 
        = \frac 1 n \sum_{i=1}^n \frac{d}{dx} \P(X_i \leq x) 
        = \frac 1 n \sum_{i=1}^n \frac{1}{\sqrt{2\pi\Sigma_{ii}}} \exp\Parent{-\frac{(x-\mu_i)^2}{2\Sigma_{ii}}}
        \leq \frac{1}{\sqrt{2\pi}}\frac 1 n \sum_{i=1}^n \frac{1}{\sqrt{\Sigma_{ii}}}
        =: M. 
    \end{align*}

    Both $\kappa^\star(\Sigma)$ as in Lemma \ref{lem:ATEtightgauss} and $\rho M^2 = M^2\opnorm{\Sigma}$ are upper bounded by $\kappa(\Sigma)$. However, they are generally incomparable. If $\Sigma$ is poorly conditioned due to high correlations $\kappa^\star(\Sigma)$ blows up whereas $\rho M^2$ is unaffected by this. In turn, if $\Sigma$ is poorly conditioned due to scale imbalances across dimensions, $\rho M^2$ will explode whereas $\kappa^\star(\Sigma)$ remains bounded. 
        
\end{remark}

\section{Conclusion}
\label{sec:discussion}

While the concentration of measure phenomena underlying the presented McDiarmid's-type concentration inequalities under dependence play an important role in mixing time analyses for Gibbs samplers, they have not yet been exploited more broadly in mathematical statistics, learning theory and theoretical computer science. We believe that this is, because they have not yet been given sufficient attention in the form of McDiarmid's and thus at a level of abstraction that is suitable for such problems as showcased by our applications. The applications we tackle are thereby only a first step in investigating the influence of dependence on concentration and convergence rates in a broad class of problems that exhibit Gaussian-like dependence and fulfill bounded differences. By analogy to Gaussian Lipschitz concentration, we expect the Gaussian version of McDiarmid's -- Gaussian Hamming-Lipschitz concentration -- to be fruitful in many domains and therefore invite researchers to apply it to problems in their specific fields. We think that an iterative back and forth between tools and applications is the most effective approach to extending theory usually built on \iid{} assumptions to (weakly) dependent cases. The DKW-type inequality derived in Theorem \ref{thm:DKWunderATE} serves as an example for this. On the one hand, it shows that McDiarmid's under ATE is a useful tool, which combined with standard arguments can improve upon existing theory and establish the expected $1/\sqrt{n}$-convergence rate. On the other hand, the comparison to the DKW-type inequality under LSI in Theorem \ref{thm:DKWunderLSI} due to \cite{bobkov:goetze2010} raises the question why the constants $\kappa^\star$ and $\rho M^2$ capturing the dependence in both cases may behave so differently. Future work should thus investigate whether this difference is merely a proof artifact or if there is an underlying more powerful characterization of dependence at work. One that ATE might not capture. 

\section{Appendix}

\subsection{Deep Dives into Related Work}

In this subsection we instantiate the foundational work by \cite{marton2013} to show that her results yield an ATE for Gaussian measures, albeit under more restrictive conditions on the covariance matrix $\Sigma$ than the results in Section \ref{sec:ATE}. Moreover, we outline when the McDiarmid's-type inequality in \cite{esposito:mondelli2023} yields dimension-free Gaussian concentration. 

\subsubsection{Marton's ATE}

\begin{definition}{Block-Interaction Matrix}

    Let $\nu$ be a probability measure on $\R^n$ with density $\nu(x) \propto \exp(-V(x))$. Let $x, y \in \R^n$ and $z^{(k)}(x,y) := (x_{I_1},..., y_{I_k},..., x_{I_m})\T$. Define the matrix A via its entries for $k, \ell \in [m]$ and $i \in I_k, j \in I_\ell$
    \begin{align*}
        A_{ij}^\rho(x,y)
        = \frac{1}{\sqrt{\rho_k-\rho} \cdot \sqrt{\rho_\ell-\rho}} \cdot 
        \begin{cases}
            \nabla_x^2 V(z^{(k)})_{ij} & \text{if $k \neq \ell$}, \\
            0 & \text{if $k = \ell$}.
        \end{cases}
    \end{align*}
    
\end{definition}

\begin{assumption}{Marton's Necessary Condition}
    \label{ass:marton}

    A probability measure $\mu$ with density $p(x) \propto \exp(-V(x))$ fulfills this assumption if 
    \begin{enumerate}
        \item The conditional measures $\nu(\cdot| x_{-I_k})$ are $\operatorname{LSI}(\rho_k)$ for all $k \in [n]$, 
        \item The matrix $[\nabla_x^2 V(x)]_{I_k} \succeq \gamma I_{\cardinality{I_k}}$ for $\gamma \in \R$ for all $k \in [n]$, 
        \item For $0 < \rho < \min_{k \in [m]} \rho_k$ we have $\sup_{x,y \in \R^n} \opnorm{A^0(x,y)} < 1$ and $\sup_{x,y \in \R^n} \opnorm{A^\rho(x,y)} \leq 1$. 
    \end{enumerate}

\end{assumption}

\begin{theorem}{Marton's ATE (Theorem 1 \cite{marton2013})}
    \label{thm:ATEmartons}

    Let $\mu$ and $\nu$ be probability measures on $\Omega^n$. Suppose $\mu$ fulfills the conditions in Assumption \ref{ass:marton} with constants $\rho_1,...,\rho_m, \rho > 0$. Then, the following ATE holds
    \begin{align*}
        \operatorname{D_{KL}}(\nu||\mu)
        \leq \sum_{k=1}^n \frac{\rho_k}{\rho} \cdot \nu\Bracket{\operatorname{D_{KL}}\Parent{\nu(\cdot | X_{-I_k}) || \mu(\cdot|X_{-I_k})}}. 
    \end{align*}
\end{theorem}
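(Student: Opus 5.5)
The plan follows Marton's transport-entropy route. Entropy differences are converted into Wasserstein distances through the conditional log-Sobolev inequalities. The coupling between blocks is then controlled by the interaction matrix $A^\rho$. Finally, one converts back to relative entropy through a convexity (HWI-type) argument along a displacement interpolation between $\nu$ and $\mu$.

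\emph{Step 1: conditional transport bounds.} By Assumption \ref{ass:marton}(1), every conditional $\mu(\cdot|x_{-I_k})$ satisfies $\operatorname{LSI}(\rho_k)$. By the Otto--Villani theorem it therefore satisfies Talagrand's inequality
\begin{align*}
W_2^2\Parent{\nu(\cdot|x_{-I_k}),\mu(\cdot|x_{-I_k})} \leq \frac{2}{\rho_k}\operatorname{D_{KL}}\Parent{\nu(\cdot|x_{-I_k})||\mu(\cdot|x_{-I_k})}
\end{align*}
for every $x_{-I_k}$. Let $D_k$ denote the $\nu$-average of the right-hand KL divergence. Here the $\rho_k$ only enter as these per-block constants; they are absorbed later through the weights $(\rho_k-\rho)^{-1/2}$ built into $A^\rho$.

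\emph{Step 2: global coupling via the interaction matrix.} Let $T$ be the optimal map from $\nu$ to $\mu$ and $x_s=(1-s)x+sT(x)$ the interpolation. I would write the entropy defect along this path using the first-order (HWI-type) identity
\begin{align*}
\operatorname{D_{KL}}(\nu||\mu) \leq \int\langle \nabla\log\tfrac{d\nu}{d\mu}, x-T(x)\rangle d\nu - \int_0^1(1-s)\int (x-T(x))\T\nabla^2V(x_s)(x-T(x))\,d\nu\,ds .
\end{align*}
I then split $\nabla^2V$ into its diagonal blocks and its off-diagonal part.
\begin{itemize}
\item The diagonal blocks are bounded below by $\gamma$ (Assumption \ref{ass:marton}(2)). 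Combined with the conditional LSIs, they produce a contribution $\rho_k\,\|\cdot\|_{I_k}^2$ per block.
\item The off-diagonal part is evaluated at points of the form $z^{(k)}(x,y)$ and therefore involves exactly the entries of $A^0$ and $A^\rho$ once each block displacement is rescaled by $\sqrt{\rho_k-\rho}$.
\end{itemize}
The condition $\sup\opnorm{A^0}<1$ is used first, to show that the Gibbs-type block-by-block coupling contracts. Hence the global displacement $\|x-T(x)\|$ is controlled by the blockwise conditional displacements from Step 1. This is Marton's Wasserstein analogue of Dobrushin's uniqueness condition.

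\emph{Step 3: closing the inequality.} In the quadratic form obtained in Step 2, subtract $\rho\sum_k\|u_{I_k}\|^2$ from the diagonal blocks, where $u = x-T(x)$. With the weights $\sqrt{\rho_k-\rho}$, the remaining form is $\sum_k(\rho_k-\rho)\|u_{I_k}\|^2$ minus the off-diagonal terms, that is, $\|\tilde u\|^2 - \tilde u\T A^\rho\tilde u \geq 0$ by $\opnorm{A^\rho}\leq 1$. What survives is a uniform curvature $\rho$ in the rescaled geometry. Plugging this into the HWI identity and optimizing the Cauchy--Schwarz step on the gradient term yields
\begin{align*}
\operatorname{D_{KL}}(\nu||\mu)\leq \sum_k \frac{\rho_k}{\rho}\,D_k,
\end{align*}
where each $D_k$ re-enters via the Step 1 bound. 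The ratio $\rho_k/\rho$ arises exactly as the ratio of block curvature to retained uniform curvature.

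\emph{Main obstacle.} I expect Step 3 to be hardest, specifically making the decomposition of the Hessian along the interpolation compatible with the block conditionals. The Hessian is evaluated at hybrid points $z^{(k)}$ rather than along a single geodesic, so the HWI inequality must be run block-by-block inside a coupling rather than globally. I would first try Marton's device: build the coupling by successive block resampling and apply a one-block HWI at each stage, summing the resulting errors. This is where both conditions on $A^0$ and on $A^\rho$ are needed.
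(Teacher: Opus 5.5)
The paper gives no proof of this theorem; it is imported from Marton (2013). I am therefore judging your outline on its own terms, and it has a gap that cannot be repaired within your strategy.

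Your Step 2 inequality bounds $\DKL{\nu}{\mu}$ by the first-order term $G:=\int\langle\nabla\log\frac{d\nu}{d\mu},x-T(x)\rangle\, d\nu$ minus a curvature term. $G$ is a Fisher-information quantity, not an entropy. This already fails in the trivial instance $n=m=1$, $\mu=\mathcal N(0,1)$, $\nu=\mathcal N(0,\sigma^2)$, where all hypotheses hold with $\rho_1=1$ and $A^0=A^\rho=0$:
\begin{itemize}
\item $T(x)=x/\sigma$ and $G=(1-\sigma)^2(1+\sigma)/\sigma$, while the curvature term is $(1-\sigma)^2/2$.
\item So the right-hand side of Step 2 is of order $1/\sigma$ as $\sigma\to0$.
\item The right-hand side of the statement is $\frac{1}{\rho}\DKL{\nu}{\mu}\asymp\frac{1}{\rho}\log(1/\sigma)$.
\end{itemize}
Hence no bound on the Step 2 right-hand side can produce the theorem.

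This is also why Step 3 does not close. Cauchy--Schwarz turns $G$ into $W_2(\nu,\mu)\sqrt{I(\nu\|\mu)}$. The Talagrand inequalities of Step 1, together with the $A^0$-contraction, control only the $W_2$ factor. Fisher information is not bounded by conditional relative entropies, so the $D_k$ never re-enter. The most this route can give is an inequality with $I(\nu\|\mu)$ on the right, i.e.\ a log-Sobolev-type bound. That is the kind of conclusion Marton's ATE (together with the conditional LSIs) implies, but it does not imply ATE. Your fallback, a one-block HWI inside a resampling coupling, fails for the same reason: with one block it is literally the same inequality.

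Independently, the curvature bookkeeping is wrong. LSI$(\rho_k)$ of the conditionals is not a lower bound on $[\nabla^2V]_{I_k}$. In your Taylor term only $\gamma$ (possibly negative) is available, so subtracting $\rho$ leaves $\gamma-\rho$, not $\rho_k-\rho$, and $\opnorm{A^\rho}\le1$ cannot be invoked. The two agree only for Gaussians, where $[\nabla^2V]_{kk}=(\Sigma^{-1})_{kk}=\rho_k$. Note also that $\gamma$ does not appear in the final constant. Similarly, a global expansion evaluates the Hessian at $x_s$, not at the hybrid points $z^{(k)}(x,y)$.

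The missing idea is to stay at the level of relative entropies throughout.
\begin{itemize}
\item By the chain rule, $\DKL{\nu}{\mu}=\DKL{\nu_{-I_k}}{\mu_{-I_k}}+\nu[\DKL{\nu(\cdot|X_{-I_k})}{\mu(\cdot|X_{-I_k})}]$.
\item So, with $w_k=\rho_k/\rho$, the claim is equivalent to $\sum_k w_k\DKL{\nu_{-I_k}}{\mu_{-I_k}}\le(\sum_k w_k-1)\DKL{\nu}{\mu}$.
\item Moreover $\DKL{\nu_{-I_k}}{\mu_{-I_k}}=\min\{\DKL{r}{\mu}: r_{-I_k}=\nu_{-I_k}\}$, so each marginal term can be bounded by $\DKL{r}{\mu}$ for a measure $r$ obtained by transporting only block $I_k$.
\end{itemize}
In such a comparison the hybrid points $z^{(k)}(x,y)$ (the endpoints of block-only moves), blockwise semi-convexity and the conditional transport inequalities appear naturally, and no Fisher information is ever produced. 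This is the kind of variational, block-preserving transport argument the paper attributes to Ascolani--Lavenant--Zanella.
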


\begin{corollary}{Marton's ATE for Gaussians}

    Let $\mathcal N$ on $\R^n$ be a multivariate Gaussian with mean $\mu \in \R^n$ and $\Sigma \in \R^{n \times n}$. Further, let $\rho_k = (\Sigma\inv)_{kk}$ and $\rho \in (0, \min_{k \in [n]}\rho_k)$ be the biggest $\rho$ \st{} $\rho I_n \preceq \Sigma\inv \preceq 2\diagonal{\Sigma\inv} - \rho I_n$. Then, we have that
    \begin{align*}
        \ent[\mathcal N]{f}
        \leq \sum_{i=1}^n \frac{\rho_i}{\rho} \cdot \E[\ent[\mathcal N(\cdot | X_{-i})]{f}].
    \end{align*}

\end{corollary}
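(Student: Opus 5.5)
The approach is to verify the three conditions of Assumption \ref{ass:marton} for $\mathcal N(\mu,\Sigma)$ with singleton blocks $I_k=\{k\}$, read off the constants, and then pass from Theorem \ref{thm:ATEmartons}, which is phrased in KL divergences, to the entropy form of the statement.

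\textbf{Reduction to a zero-mean Gaussian.} Entropy is invariant under translations of $\mu$, so we may assume $\mu=0$. Then $V(x)=\frac12 x\T\Sigma\inv x$ and $\nabla_x^2 V\equiv\Sigma\inv$ is constant.

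\textbf{Conditions 1 and 2.} The conditional law $\mathcal N(\cdot|x_{-k})$ is a one-dimensional Gaussian with variance $1/(\Sigma\inv)_{kk}$. By Bakry--Emery (Theorem \ref{thm:bakryemery}) it satisfies an LSI with constant $\rho_k=(\Sigma\inv)_{kk}$, in the normalization where the strong log-concavity parameter is the LSI constant. This is the normalization I expect Marton uses, and I would check it against \cite{marton2013}. Condition 2 holds with $\gamma=\min_k(\Sigma\inv)_{kk}>0$.

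\textbf{Condition 3.} Write $D=\diagonal{\Sigma\inv}$ and $H=\Sigma\inv-D$ for the off-diagonal part. Since the Hessian is constant, $A^\rho=(D-\rho I)^{-1/2}H(D-\rho I)^{-1/2}$ for every $x,y$. The condition $\opnorm{A^\rho}\le 1$ is equivalent to
\begin{align*}
    -(D-\rho I_n)\preceq H\preceq D-\rho I_n,
\end{align*}
because $D-\rho I_n\succ0$ for $\rho<\min_k\rho_k$. Adding $D$ to each side gives exactly
\begin{align*}
    \rho I_n\preceq \Sigma\inv\preceq 2D-\rho I_n,
\end{align*}
which is the hypothesis. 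The strict condition $\opnorm{A^0}<1$ follows by monotonicity. Since $(D-\rho I)^{-1}\succ D\inv$, we get $\opnorm{A^0}\le\opnorm{A^\rho}\le 1$, and strictness holds because $\rho>0$ gives $\Sigma\inv\succ0$ and $2D-\Sigma\inv\succ 0$. The existence of a largest admissible $\rho$ follows from the fact that the constraints define a closed interval. If that interval is empty, the corollary has no content.

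\textbf{From KL to entropy.} Theorem \ref{thm:ATEmartons} then gives
\begin{align*}
    \DKL{\nu}{\mathcal N}\le\sum_i \frac{\rho_i}{\rho}\, \nu\bigl[\DKL{\nu(\cdot|X_{-i})}{\mathcal N(\cdot|X_{-i})}\bigr]
\end{align*}
for every $\nu$. For a nonnegative $f$ with $\E[f]=1$, set $d\nu=f\,d\mathcal N$. Then $\DKL{\nu}{\mathcal N}=\ent[\mathcal N]{f}$. The conditional density satisfies $d\nu(\cdot|x_{-i})/d\mathcal N(\cdot|x_{-i})=f/\E[f|X_{-i}=x_{-i}]$. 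Averaging over the $X_{-i}$-marginal of $\nu$, which has density $\E[f|X_{-i}]$ relative to $\mathcal N$, turns each conditional KL term into $\E[\ent[\mathcal N(\cdot|X_{-i})]{f}]$. For general $f$ we use homogeneity (Auxiliary Result \ref{aux:homoent}).

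\textbf{Main obstacle.} I expect the delicate step to be matching Marton's normalization conventions: the scaling of $\rho_k$ in the LSI and the role of $\gamma$. The translation of condition 3 into the spectral sandwich is routine once $A^\rho$ is written in matrix form.
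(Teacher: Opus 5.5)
Your proposal is correct and follows the paper's proof step for step. You verify the three conditions of Assumption \ref{ass:marton} with singleton blocks: Bakry--Emery for the one-dimensional conditionals with $\rho_k=(\Sigma\inv)_{kk}$, the constant Hessian $\Sigma\inv$, and the matrix form $A^\rho=(D-\rho I_n)^{-1/2}(\Sigma\inv-D)(D-\rho I_n)^{-1/2}$ turning the norm conditions into $\rho I_n\preceq\Sigma\inv\preceq 2D-\rho I_n$ and $0\prec\Sigma\inv\prec 2D$. You then apply Theorem \ref{thm:ATEmartons} and convert from KL to entropy, redoing inline the computation the paper cites as Auxiliary Result \ref{aux:equinotionsATE}.

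One small inaccuracy in a side remark: after intersecting with $(0,\min_k\rho_k)$, the admissible set of $\rho$ need not be closed (for diagonal $\Sigma$ it is the open interval itself). This is harmless, since your argument works for any admissible $\rho$ and the statement presupposes one exists.
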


\begin{proof}

    We start by verifying Assumption \ref{ass:marton}: 
    \begin{enumerate}
        \item For all $k \in [n]$, the conditional density of $\nu$ given $x_{-I_k} = x_{-k}$ is
        \begin{align*}
            \mathcal N(x_k| x_{-k})
            \propto \exp\Parent{-\frac 1 2 x\T P x}
            \propto \exp\Parent{-\frac 1 2 x_k P_{kk} x_k - x_k\T [P]_{k,-k}x_{-k}}.
        \end{align*}

        We have $-\nabla_{x_k}^2 \log(\mathcal N(x_k|x_{-k})) \succeq P_{kk} =: \rho_k$. Therefore, by the Bakry-Emery criterion of Theorem \ref{thm:bakryemery} the measures $\mathcal N(\cdot | x_{-k})$ are $\operatorname{LSI}(\rho_k)$. 

        \item For $\mu$ we have $\nabla_x^2 V(x) = \Sigma\inv$ and therefore $[\nabla_x^2 V(x)]_k = P_{kk} \succeq \rho_k$.

        \item Since $\nabla_x^2 V(x) = \Sigma\inv$ is constant in $x$, the block interaction matrix is defined via the entries 
        \begin{align*}
            A_{ij}^\rho(x,y)
            = \frac{1}{\sqrt{\rho_i-\rho} \cdot \sqrt{\rho_j-\rho}} \cdot 
            \begin{cases}
                P_{ij} & \text{if $i \neq j$}, \\
                0 & \text{if $i = j$}.
            \end{cases}
        \end{align*}
    
        Note that $\rho_k = P_{kk}$ and hence defining $D := \diagonal{P}$ this simplifies to
        \begin{align*}
            A^\rho = (D - \rho I_n)^{-1/2} (P-D)(D - \rho I_n)^{-1/2}. 
        \end{align*}
    
        Now, the condition $\opnorm{A^0} < 1$ is equivalent to $-I_n \prec A^0 \prec I_n$ which simplifies as 
        \begin{align*}
            -I_n \prec D^{-1/2} (P-D)D^{-1/2} \prec I_n
            \; \Leftrightarrow \; 
            0 \prec P \prec 2D. 
        \end{align*}
    
        Further, the condition $\opnorm{A^\rho} \leq 1$ is equivalent to $-I_n \preceq A^\rho \preceq I_n$ which simplifies as 
        \begin{align*}
            -I_n \preceq (D - \rho I_n)^{-1/2} (P-D)(D - \rho I_n)^{-1/2} \preceq I_n
            &\; \Leftrightarrow \; 
            \rho I_n - D \preceq P-D \preceq D-\rho I_n \\
            &\; \Leftrightarrow \; 
            \rho I_n \preceq P \preceq 2D-\rho I_n. 
        \end{align*}

        This latter condition holds by assumption and as $\rho > 0$, we have $0 \prec P \prec 2D \Leftrightarrow \opnorm{A^0} < 1$. 
    \end{enumerate}

    An application of Theorem \ref{thm:ATEmartons} and Auxiliary Result \ref{aux:equinotionsATE} yield the statement. 
\end{proof}

\subsubsection{McDiarmid's via Hellinger Integrals}

\begin{definition}{Hellinger Integral of Order $\alpha$}

    Let $\mu, \nu$ be two probability measures \st{} $\mu \ll \nu$. We define the Hellinger integral of order $\alpha$ as 
    \begin{align*}
        H_\alpha (\mu || \nu)
        := \int \Parent{\frac{d\mu}{d\nu}}^\alpha d\nu. 
    \end{align*}    
\end{definition}

\begin{theorem}{McDiarmid's via Hellinger Integrals (Theorem 1, \cite{esposito:mondelli2023})}
    \label{thm:mcdiarmidshellinger}

    Let $\nu$ on $\Omega^n$ be a probability measure with marginals $\nu_i$ on $\Omega$. Let $\nu \ll \otimes_{i=1}^n \nu_i$ and $f : \Omega \to \R$ fulfill the bounded differences property with constants $c := (c_1,...,c_n)\T$. Then, for $t > 0$ and $\alpha > 1$, 
    \begin{align*}
        \P\Parent{\Abs{f(X) - \int_\Omega f(x) d\otimes_{i=1}^n \nu_i(x)} \geq t}
        \leq 2^{\frac{\alpha}{\alpha - 1}} \exp\Parent{- \frac{2t^2}{\frac{\alpha}{\alpha - 1} \twonorm{c}^2}} H_\alpha(\nu|| \otimes_{i=1}^n \nu_i)^{\frac 1 \alpha}. 
    \end{align*}
\end{theorem}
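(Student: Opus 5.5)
We prove the statement by a change of measure from $\nu$ to the product of its marginals $\pi := \otimes_{i=1}^n \nu_i$, followed by the classical McDiarmid's inequality (Theorem \ref{thm:McDiarmids}) under $\pi$. Write $m := \int f \, d\pi$ and let $A := \curly{x : \abs{f(x) - m} \geq t}$. Since $\nu \ll \pi$, we have
\begin{align*}
    \nu(A) = \int \indicator{x \in A} \frac{d\nu}{d\pi}(x) \, d\pi(x).
\end{align*}

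\textbf{Step 1 (Hölder).} Set $\beta := \frac{\alpha}{\alpha-1}$, so that $1/\alpha + 1/\beta = 1$. Hölder's inequality under $\pi$ gives
\begin{align*}
    \nu(A) \leq \pi(A)^{1/\beta} \Parent{\int \Parent{\frac{d\nu}{d\pi}}^\alpha d\pi}^{1/\alpha}
    = \pi(A)^{\frac{\alpha-1}{\alpha}} H_\alpha(\nu || \pi)^{\frac 1 \alpha}.
\end{align*}

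\textbf{Step 2 (independent McDiarmid).} Under $\pi$ the coordinates are independent and $f$ has bounded differences. Its $\pi$-mean is exactly $m$, so Theorem \ref{thm:McDiarmids} yields
\begin{align*}
    \pi(A) \leq 2\exp\Parent{-\frac{2t^2}{\twonorm{c}^2}}.
\end{align*}

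\textbf{Step 3 (combine).} Raising this bound to the power $\frac{\alpha-1}{\alpha}$ gives
\begin{align*}
    2^{\frac{\alpha-1}{\alpha}} \exp\Parent{-\frac{2t^2}{\frac{\alpha}{\alpha-1}\twonorm{c}^2}}.
\end{align*}
Since $2^{\frac{\alpha-1}{\alpha}} \leq 2 \leq 2^{\frac{\alpha}{\alpha-1}}$ for $\alpha > 1$, this already implies the stated bound with the constant $2^{\alpha/(\alpha-1)}$. If one instead wants to reproduce that constant exactly, it is natural to treat the two tails separately and apply the one-sided McDiarmid bound to each.

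The only point needing care is measurability and well-definedness: $f$ must be integrable under $\pi$, and if $H_\alpha(\nu||\pi) = \infty$ the bound is trivial. The main conceptual point is that the centering is at the $\pi$-mean rather than the $\nu$-mean. This is exactly why the classical inequality applies directly, and no further obstacle is expected.
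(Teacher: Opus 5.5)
Your proof is correct. The paper does not prove this theorem; it only quotes it from \cite{esposito:mondelli2023}. Your argument is the standard change-of-measure argument behind that cited result: H\"older's inequality under $\pi=\otimes_{i=1}^n\nu_i$ gives $\nu(A)\le \pi(A)^{(\alpha-1)/\alpha}H_\alpha(\nu\|\pi)^{1/\alpha}$, and the classical McDiarmid inequality then bounds $\pi(A)$.

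There are two small corrections to your closing remarks.

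First, your route already gives the prefactor $2^{(\alpha-1)/\alpha}$, which is smaller than the stated $2^{\alpha/(\alpha-1)}$, so nothing more is needed. Your suggestion to split into two one-sided tails to reproduce that constant exactly is unnecessary. It would also give the prefactor $2$, not $2^{\alpha/(\alpha-1)}$.

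Second, integrability is automatic. Changing one coordinate at a time shows $\sup f-\inf f\le\sum_i c_i$, so $f$ is bounded and only its measurability is required.
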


\begin{lemma}{Hellinger Integrals for Zero-Mean Gaussians}
    \label{lem:hellintgauss}
    
    Let $\mathcal N_1$ and $\mathcal N_2$ on $\R^n$ be zero-mean Gaussians with covariances matrices $\Sigma_1, \Sigma_2$. Assume that $M := \alpha \Sigma_1\inv + (1-\alpha) \Sigma_2\inv \succeq 0$. Then, the Hellinger integrals of order $\alpha > 1$ have the form
    \begin{align*}
        H_\alpha(\mathcal N_1 || \mathcal N_2)
        = \det(\Sigma_1)^{-\alpha/2} \det(\Sigma_2)^{-(1-\alpha)/2} \det(M)^{-1/2}. 
    \end{align*}
    
\end{lemma}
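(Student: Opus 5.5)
Since both measures are centered Gaussians, I would compute the Hellinger integral in closed form. The plan is to write out the densities, combine them into a single Gaussian-type integrand, and evaluate the resulting integral with the standard formula
\[
\int_{\R^n}\exp\Parent{-\tfrac12 x\T M x}\,dx=(2\pi)^{n/2}\det(M)^{-1/2},
\qquad M\succ 0 .
\]
The densities are
\[
\mathcal N_k(x)=(2\pi)^{-n/2}\det(\Sigma_k)^{-1/2}\exp\Parent{-\tfrac12 x\T\Sigma_k\inv x},\qquad k=1,2.
\]
The Radon--Nikodym derivative is the ratio $\mathcal N_1(x)/\mathcal N_2(x)$, so
\[
H_\alpha(\mathcal N_1||\mathcal N_2)=\int_{\R^n}\mathcal N_1(x)^\alpha\,\mathcal N_2(x)^{1-\alpha}\,dx .
\]

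The first step is to collect the prefactors. The powers of $2\pi$ combine to $(2\pi)^{-n\alpha/2}(2\pi)^{-n(1-\alpha)/2}=(2\pi)^{-n/2}$. The determinant factors give $\det(\Sigma_1)^{-\alpha/2}\det(\Sigma_2)^{-(1-\alpha)/2}$. The exponents add to $-\tfrac12 x\T\Parent{\alpha\Sigma_1\inv+(1-\alpha)\Sigma_2\inv}x=-\tfrac12 x\T M x$. The second step applies the Gaussian integral formula above. Its $(2\pi)^{n/2}$ cancels the remaining $(2\pi)^{-n/2}$, which leaves exactly $\det(\Sigma_1)^{-\alpha/2}\det(\Sigma_2)^{-(1-\alpha)/2}\det(M)^{-1/2}$.

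The one delicate point is finiteness, which is the only place the hypothesis on $M$ is used. The stated assumption $M\succeq 0$ is not quite enough. If $M$ is singular, then along $\ker M$ the integrand does not decay and the integral diverges. This matches the formula, since $\det(M)^{-1/2}=\infty$ in that case. So I would read the hypothesis as requiring $M\succ 0$, and note that the singular case holds trivially in the extended sense, with both sides equal to $+\infty$.

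Under $M\succ 0$ I would take the symmetric square root and substitute $x=M^{-1/2}z$. This reduces the integral to a product of $n$ one-dimensional standard Gaussian integrals, which justifies the determinant formula. Since $\alpha>1$, the coefficient $1-\alpha$ is negative and positivity of $M$ is not automatic. The absolute continuity $\mathcal N_1\ll\mathcal N_2$ holds automatically, because both measures have everywhere positive densities.
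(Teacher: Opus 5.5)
Your proposal is correct and follows the paper's proof essentially line by line: you write $H_\alpha$ as $\int \mathcal N_1(x)^\alpha \mathcal N_2(x)^{1-\alpha}\,dx$, collect the $(2\pi)$ and determinant prefactors, and evaluate $\int \exp(-\tfrac12 x\T M x)\,dx = (2\pi)^{n/2}\det(M)^{-1/2}$. Your remark that the computation actually requires $M \succ 0$, with the singular case holding only in the extended sense where both sides are $+\infty$, is accurate; the paper relies on $M \succ 0$ implicitly when it normalizes by $Z_M$ and writes $\det(M\inv)$.
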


\begin{proof}

    Let $p$ and $q$ be the densities of $\mathcal N_1$ and $\mathcal N_2$ with respect to Lebesgue measure. Then, 
    \begin{align*}
        H_\alpha(\mathcal N_1 || \mathcal N_2)
        &= \int_{\R^n} \Parent{\frac{d\mu}{d\nu}}^\alpha d\nu 
        = \int_{\R^n} p(x)^\alpha q(x)^{1-\alpha} dx
        = \int_{\R^n} \mathcal N(x; 0, \Sigma_1)^\alpha \mathcal N(x; 0, \Sigma_2)^{1-\alpha} dx \\
        &= (2\pi)^{-n/2} \det(\Sigma_1)^{-\alpha/2} \det(\Sigma_2)^{-(1-\alpha)/2} \int_{\R^n} \exp\Parent{-\frac 1 2 x\T \Parent{\alpha \Sigma_1\inv + (1-\alpha) \Sigma_2\inv } x} dx \\
        &= (2\pi)^{-n/2} \det(\Sigma_1)^{-\alpha/2} \det(\Sigma_2)^{-(1-\alpha)/2} \cdot Z_M \int_{\R^n} \frac{1}{Z_M}\exp\Parent{-\frac 1 2 x\T M x} dx \\
        &= (2\pi)^{-n/2} \det(\Sigma_1)^{-\alpha/2} \det(\Sigma_2)^{-(1-\alpha)/2} \cdot (2\pi)^{n/2} \det(M\inv)^{1/2} \\
        &= \det(\Sigma_1)^{-\alpha/2} \det(\Sigma_2)^{-(1-\alpha)/2} \det(M\inv)^{1/2}.
    \end{align*}

    The statement is recovered as $\det(M\inv) = \det(M)\inv$. 
\end{proof}

\begin{corollary}{Hellinger Integral between Joint Gaussian and Marginals}

    Let $\mathcal N$ on $\R^n$ be a multivariate Gaussian with zero-mean and covariance $\Sigma \in \R^{n \times n}$ and let $\mathcal N_i$ be its $i$-th marginal. Let $X \sim \mathcal N$ and $\cor[\mathcal N]{X} := D^{-1/2} \Sigma D^{-1/2}$ with $D := \diagonal{\Sigma}$ and let $\lambda_1,...,\lambda_n$ be the eigenvalues of $\cor[\mathcal N]{X}$. For all $\alpha > 1$, under the condition that $\cor[\mathcal N]{X} \preceq \frac{\alpha}{\alpha-1} \cdot I_n$ we have 
    \begin{align*}
        H_\alpha(\mathcal N || \otimes_{i=1}^n \mathcal N_i) 
        = \prod_{i=1}^n (\alpha \lambda_i^{\alpha-1} + (1-\alpha) \lambda_i^{\alpha})^{-1/2}.
    \end{align*}

\end{corollary}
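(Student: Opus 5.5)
The plan is to apply Lemma \ref{lem:hellintgauss} with $\mathcal N_1 = \mathcal N = \mathcal N(0,\Sigma)$ and $\mathcal N_2 = \otimes_{i=1}^n \mathcal N_i$. The first step is to note that the product of the marginals $\mathcal N_i = \mathcal N(0,\Sigma_{ii})$ is again a zero-mean Gaussian, with covariance $D = \diagonal{\Sigma}$. We also need $\mathcal N \ll \otimes_i \mathcal N_i$, which holds because both are nondegenerate Gaussians on $\R^n$ (we assume $\Sigma \succ 0$). So we take $\Sigma_1 = \Sigma$ and $\Sigma_2 = D$.

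The key step is to rewrite everything in terms of the correlation matrix $R := \cor[\mathcal N]{X} = D^{-1/2}\Sigma D^{-1/2}$. We have $\Sigma = D^{1/2} R D^{1/2}$, and therefore
\begin{align*}
M = \alpha \Sigma\inv + (1-\alpha) D\inv = D^{-1/2}\Parent{\alpha R\inv + (1-\alpha) I_n} D^{-1/2}.
\end{align*}
Diagonalizing $R$ in an orthonormal eigenbasis, the middle factor has eigenvalues $\alpha/\lambda_i + 1 - \alpha$. This quantity is nonnegative exactly when $\lambda_i \le \frac{\alpha}{\alpha-1}$, so the hypothesis $R \preceq \frac{\alpha}{\alpha-1} I_n$ is precisely what makes $M \succeq 0$, as Lemma \ref{lem:hellintgauss} requires. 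In the boundary case of equality $M$ is singular and both sides are infinite; I would either note this convention or treat the condition as strict.

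It remains to plug the determinants into the formula of Lemma \ref{lem:hellintgauss}. We have $\det(\Sigma) = \det(D)\prod_i \lambda_i$, $\det(D)$ as it stands, and $\det(M) = \det(D)\inv \prod_i (\alpha/\lambda_i + 1 - \alpha)$. The powers of $\det(D)$ then carry exponents $-\alpha/2$, $-(1-\alpha)/2$ and $+1/2$, which sum to zero, so $\det(D)$ cancels. What is left is
\begin{align*}
\prod_{i=1}^n \lambda_i^{-\alpha/2}\Parent{\tfrac{\alpha}{\lambda_i} + 1 - \alpha}^{-1/2} = \prod_{i=1}^n \Parent{\alpha\lambda_i^{\alpha-1} + (1-\alpha)\lambda_i^{\alpha}}^{-1/2},
\end{align*}
where the equality comes from absorbing $\lambda_i^{-\alpha/2} = (\lambda_i^{\alpha})^{-1/2}$ into the bracket. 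This is the claimed formula.

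The main obstacle is mostly bookkeeping: tracking the determinant exponents correctly and checking that the positivity condition on $M$ in Lemma \ref{lem:hellintgauss} translates exactly into the eigenvalue condition on the correlation matrix. The congruence by $D^{-1/2}$ preserves positive semidefiniteness, so that translation is immediate.
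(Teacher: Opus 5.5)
Your proposal is correct and follows the paper's proof essentially step for step: apply Lemma \ref{lem:hellintgauss} with $\Sigma_1=\Sigma$ and $\Sigma_2=D$, pull $D^{-1/2}$ out of $M$ by congruence so that $\det(D)$ cancels, and read off the product from the eigenvalues of the correlation matrix. The differences are minor. You verify $M\succeq 0$ directly from the eigenvalues $\alpha/\lambda_i+1-\alpha$, where the paper uses order-reversal of matrix inversion. Your remark on the boundary case $\lambda_i=\frac{\alpha}{\alpha-1}$ (where $M$ is singular and both sides are $\infty$) addresses a point the paper glosses over: its lemma is stated for $M\succeq 0$ but computed as if $M\succ 0$.
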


\begin{remark}

    Taking a logarithm simplifies the analysis of the \rhs{} above: 
    \begin{align*}
        \log\Parent{H_\alpha(\mathcal N || \otimes_{i=1}^n \mathcal N_i)} 
        = - \frac 1 2 \sum_{i=1}^n \log(\alpha \lambda_i^{\alpha-1} + (1-\alpha) \lambda_i^{\alpha}). 
    \end{align*}

    By Taylor expanding the summands around $\lambda_i = 1$, the case of independence, we may see that under suitable conditions on the eigenvalues, the Hellinger integral can be controlled by $\frobnorm{\cor[\mathcal N]{X} - I_n}$. Hence, Theorem \ref{thm:mcdiarmidshellinger} may also yield dimension-free concentration under weak dependence. However, the control of dependence is much less explicit than in Corollary \ref{cor:mcdiarmidsgaussian} and the concentration is around the expectation of $f(X)$ \wrt{} the product measure $\otimes_{i=1}^n \mathcal N_i$. 
    
\end{remark}

\begin{proof}

    Under the condition that $M := \alpha \Sigma\inv + (1-\alpha) I_n \succeq 0$ Lemma \ref{lem:hellintgauss} yields 
    \begin{align*}
        H_\alpha(\mathcal N || \otimes_{i=1}^n \mathcal N_i) 
        &= \det(\Sigma)^{-\alpha/2} \det(D)^{-(1-\alpha)/2} \det(\alpha \Sigma\inv + (1-\alpha) D\inv)^{-1/2} \\
        &= \det(C)^{-\alpha/2} \det(D)^{-\alpha/2} \det(D)^{-(1-\alpha)/2} \det(D^{-1/2}(\alpha C\inv + (1-\alpha) I_n)D^{-1/2})^{-1/2} \\
        &= \det(C)^{-\alpha/2} \det(\alpha C\inv + (1-\alpha) I_n)^{-1/2} \\
        &= \det(\alpha C^{\alpha-1} + (1-\alpha) C^\alpha)^{-1/2}. 
    \end{align*}

    Above, we use that $\det(\Sigma) = \det(C)\det(D)$ and $\det(ABA) = \det(A)^2\det(B)$ for $A, B \in \R^{d \times d}$. By the Spectral Mapping Theorem, any matrix polynomial $p(C)$ has eigenvalues $p(\lambda_1),...,p(\lambda_2)$. Since the determinant is just the product of the eigenvalues of a matrix, we thus have
    \begin{align*}
        H_\alpha(\mathcal N || \otimes_{i=1}^n \mathcal N_i) 
        = \prod_{i=1}^n (\alpha \lambda_i^{\alpha-1} + (1-\alpha) \lambda_i^{\alpha})^{-1/2}.
    \end{align*}

    Using that $A \preceq B \Leftrightarrow B\inv \preceq A\inv$ for $A, B \in \R^{d \times d}$ and by left- and right-multiplication of $D^{-1/2}$: 
    \begin{align*}
        M = \alpha \Sigma\inv + (1-\alpha)D\inv \succeq 0
        \; &\Leftrightarrow \; 
        \alpha \Sigma\inv \succeq (\alpha-1)D\inv \\
        \; \Leftrightarrow \; 
        \Sigma \preceq \frac{\alpha}{\alpha-1} D
        \; &\Leftrightarrow \; 
        D^{-1/2} \Sigma D^{-1/2} \preceq \frac{\alpha}{\alpha-1}  I_n. 
    \end{align*}

    The statement is then recovered as $\cor{\mu} = D^{-1/2} \Sigma D^{-1/2}$.  
\end{proof}

\begin{remark}

    $\cor[\mathcal N]{X} \preceq \frac{\alpha}{\alpha-1} \cdot I_n$ is equivalent to $\Sigma \preceq \frac{\alpha}{\alpha-1} \cdot \diagonal{\Sigma}$. 
    
\end{remark}

\subsection{Auxiliary Results}

\begin{theorem}{Bakry-Emery (Theorem 21.2, \citet{villani2009}, Corollary 5.7.2, \citet{bakry:gentil:ledoux2014})}
    \label{thm:bakryemery}

    Let the probability measure $\nu$ on $\R^n$ have density $\nu(x) \propto \exp(-V(x))$ for all $x \in \R^n$. Suppose there is $\alpha > 0$ \st{} $\nabla^2 V(x) \succeq \alpha I_n$ for all $x \in \R^n$. Then, the following holds: 
    \begin{align*}
        \ent[\nu]{f^2} 
        \leq \frac 2 \alpha \cdot \E[\twonorm{\nabla f(X)}^2]
        \quad \text{and} \quad 
        \var[\nu]{f} 
        \leq \frac 2 \alpha \cdot \E[\twonorm{\nabla f(X)}^2].
    \end{align*}
\end{theorem}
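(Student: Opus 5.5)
The statement is the Bakry--Emery theorem, so I would follow the standard semigroup (Γ₂) route. Let $L = \Delta - \nabla V\cdot\nabla$ be the Langevin generator. It is reversible with respect to $\nu$ and has the carré du champ $\Gamma(f) = \twonorm{\nabla f}^2$. Let $P_t = e^{tL}$ be the associated semigroup. The key input is the curvature-dimension condition $\Gamma_2(f) \geq \alpha\,\Gamma(f)$. To get it, I would compute Bochner's formula
\begin{align*}
    \Gamma_2(f) = \frobnorm{\nabla^2 f}^2 + \nabla f\T \nabla^2 V \nabla f,
\end{align*}
and then use the assumption $\nabla^2 V \succeq \alpha I_n$ to bound the second term from below by $\alpha\,\Gamma(f)$.

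The next step is the standard consequence of this condition, the gradient commutation bound $\sqrt{\Gamma(P_t f)} \leq e^{-\alpha t} P_t\sqrt{\Gamma(f)}$. The weaker $L^2$ version $\Gamma(P_t f) \leq e^{-2\alpha t}P_t\Gamma(f)$ is simpler. It follows by differentiating $s\mapsto P_s\Gamma(P_{t-s}f)$: the derivative equals $2P_s\Gamma_2(P_{t-s}f)$, which is at least $2\alpha P_s\Gamma(P_{t-s}f)$. The strong version, which is what the entropy bound needs, comes from the same interpolation applied to $\sqrt{\Gamma(P_{t-s}f)+\epsilon}$. Its derivative is controlled by a refined inequality $\Gamma_2(g) \geq \alpha\Gamma(g) + \Gamma(\Gamma(g))/(4\Gamma(g))$. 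I expect this refined inequality to be the main technical obstacle. In the Euclidean setting it follows from $\frobnorm{\nabla^2 g}^2\twonorm{\nabla g}^2 \geq \twonorm{\nabla^2 g\,\nabla g}^2$, which is Cauchy--Schwarz.

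With these bounds in hand, the two inequalities follow from the de Bruijn/semigroup interpolation.

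\textbf{Variance bound.} Write
\begin{align*}
    \var[\nu]{f} = 2\int_0^\infty \int \Gamma(P_t f)\,d\nu\,dt .
\end{align*}
Applying the $L^2$ commutation bound and invariance of $\nu$ gives
\begin{align*}
    \var[\nu]{f} \leq 2\int_0^\infty e^{-2\alpha t}dt\,\E[\Gamma(f)] = \tfrac1\alpha\E[\twonorm{\nabla f}^2].
\end{align*}
This is in fact sharper than the stated constant $2/\alpha$, so the stated inequality follows.

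\textbf{Entropy bound.} For $g = f^2 > 0$, write
\begin{align*}
    \ent[\nu]{g} = \int_0^\infty \int \frac{\Gamma(P_t g)}{P_t g}\,d\nu\,dt .
\end{align*}
By the strong commutation bound and Cauchy--Schwarz for $P_t$,
\begin{align*}
    \Gamma(P_tg) \leq e^{-2\alpha t}(P_t\sqrt{\Gamma(g)})^2 \leq e^{-2\alpha t} P_t g\, P_t(\Gamma(g)/g).
\end{align*}
Integrating gives $\ent[\nu]{g} \leq \frac{1}{2\alpha}\E[\Gamma(g)/g]$. Finally, since $\Gamma(f^2)/f^2 = 4\Gamma(f)$, this becomes $\frac{2}{\alpha}\E[\twonorm{\nabla f}^2]$.

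\textbf{Technical conditions.} I would check these at the end. First, I would restrict to smooth, compactly supported, positive test functions and conclude by density. Second, the identities above use ergodicity, $P_t f \to \int f\,d\nu$ as $t\to\infty$. This holds because the variance bound already gives $\var[\nu]{P_tf}\to0$, and a similar argument handles the entropy.
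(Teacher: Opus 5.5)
Your proposal is correct and is essentially the $\Gamma_2$-semigroup argument behind the cited Corollary 5.7.2 of Bakry--Gentil--Ledoux (the paper gives no proof of its own, only references), and it even gives the sharper variance constant $1/\alpha$, which is the constant the paper actually uses in Corollary \ref{cor:covboundlogconc}. One small repair: in your technical remarks, $P_tf\to\int f\,d\nu$ cannot be deduced from the variance bound, because that bound was derived assuming it; argue it directly instead, e.g.\ from the spectral theorem and $\ker L=\{\text{constants}\}$ (since $\int\twonorm{\nabla h}^2\,d\nu=0$ forces $h$ to be constant), or from $\|\nabla P_tf\|_\infty\le e^{-\alpha t}\|\nabla f\|_\infty$ together with invariance of $\nu$ and its finite first moment.
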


\begin{auxiliary}{Homogeneity of Entropy}
    \label{aux:homoent}

    Let $X \sim \nu$ where $\nu$ is a probability measure on $\R^n$. For $f : \R^n \to [0, \infty)$ and $\alpha > 0$ we  have 
    \begin{align*}
        \ent[\nu]{\alpha f} = \alpha \ent[\nu]{f}.
    \end{align*}

\end{auxiliary}

\begin{proof}

    By the definition of entropy we have 
    \begin{align*}
        \ent[\nu]{\alpha f}
        = \E\Bracket{\alpha f(X) \log\Parent{\frac{\alpha f(X)}{\E[\alpha f(X)]}}}
        = \alpha \E\Bracket{f(X) \log\Parent{\frac{f(X)}{\E[f(X)]}}}
        = \alpha \ent[\nu]{f}. 
    \end{align*}
\end{proof}

\begin{auxiliary}{Equivalent Notions of ATE (Definition 30, \cite{anari:koehler:vuong2024})}
    \label{aux:equinotionsATE}

    Let $\mu$ on $\Omega^n$ be a probability measure fulfilling ATE. Then, the following two notions are equivalent: 
    \begin{align}
        \DKL{\nu}{\mu}
        &\leq C \sum_{i=1}^n \E[\DKL{\nu(\cdot|X_{-i})}{\mu(\cdot|X_{-i})}] & \text{for all $\nu$ on $\Omega^n$ \st{} $\nu \ll \mu$}, \label{eq:ATEKL} \\
        \ent[\mu]{f}
        &\leq C\sum_{i=1}^n \E[\ent[\mu(\cdot | X_{-i})]{f}]  & \text{for all $f : \Omega^n \to (0, \infty)$}. \label{eqATEent}
    \end{align}
\end{auxiliary}

\begin{proof}
    In this proof we use the notation $\nu[f] := \int f d\nu$ to make it more explicit \wrt{} which measure an expectation is taken. Recall that the Kullback-Leibler divergence is $\DKL{\nu}{\mu} := \ent[\nu]{\frac{d\nu}{d\mu}}$. 
    \begin{enumerate}
        \item \eqref{eq:ATEKL} $\Rightarrow$ \eqref{eqATEent}: Let $f : \Omega^n \to (0, \infty)$ be arbitrary. If $f = 0$ the statement in \eqref{eqATEent} holds trivially. For $f > 0$, define a measure $\nu$ via the Radon-Nikodym derivative $d\nu/d\mu = f/\mu[f]$. The measure $\nu$ is a probability measure, because $\mu[d\nu/d\mu] = \mu[f/\mu[f]] = 1$. By definition, it holds that
        \begin{align*}
            \DKL{\nu}{\mu} 
            = \ent[\mu]{\frac{d\nu}{d\mu}} = \mu\Bracket{\frac{d\nu}{d\mu}\log\Parent{\frac{d\nu}{d\mu}}}
            = \mu\Bracket{\frac{f}{\mu[f]} \log\Parent{\frac{f}{\mu[f]}}}
            = \frac{1}{\mu[f]} \ent[\mu]{f}. 
        \end{align*}        
        
        By definition of the regular conditional probability, 
        \begin{align*}
            \frac{d\nu(x_i|x_{-i})}{d\mu(\cdot|x_{-i})}
            := \frac{\frac{d\nu(x)}{d\mu}}{\mu[\frac{d\nu}{d\mu}|x_{-i}]}
            = \frac{f(x)/\mu[f]}{\mu[f/\mu[f]|x_{-i}]}
            = \frac{f(x)}{\mu[f|x_{-i}]}. 
        \end{align*}

        By the same reasoning as for $d\nu/d\mu$ above, we have
        \begin{align*}
            \DKL{\nu(\cdot|x_{-i})}{\mu(\cdot|x_{-i})}
            &= \mu\Bracket{\frac{d\nu(x_i|x_{-i})}{d\mu(\cdot|x_{-i})} \log\Parent{\frac{f}{\mu[f|x_{-i}]}}\Big|x_{-i}}
            = \nu\Bracket{\log\Parent{\frac{f}{\mu[f|x_{-i}]}}\Big|x_{-i}}. 
        \end{align*}

        Integrating \wrt{} the measure $\nu$ this becomes
        \begin{align*}
            \nu[\DKL{\nu(\cdot|X_{-i})}{\mu(\cdot|X_{-i})}]
            &= \nu\Bracket{\nu\Bracket{\log\Parent{\frac{f}{\mu[f|x_{-i}]}}\Big|X_{-i}}}
            = \nu\Bracket{\log\Parent{\frac{f}{\mu[f|x_{-i}]}}} \\
            &= \mu\Bracket{\frac{d\nu}{d\mu}\log\Parent{\frac{f}{\mu[f|x_{-i}]}}} 
            = \frac{1}{\mu[f]} \mu\Bracket{f\log\Parent{\frac{f}{\mu[f|x_{-i}]}}} \\
            &= \frac{1}{\mu[f]} \mu\Bracket{\mu\Bracket{f\log\Parent{\frac{f}{\mu[f|x_{-i}]}}\Big| X_{-i}}}
            = \frac{1}{\mu[f]} \mu\Bracket{\ent[\mu(\cdot|X_{-i})]{f}}.
        \end{align*}

        Substituting the expressions into \eqref{eq:ATEKL} and multiplying the inequality by $\mu[f]$ recovers \eqref{eqATEent}. 

        \item \eqref{eqATEent} $\Rightarrow$ \eqref{eq:ATEKL}: Let $\nu$ on $\Omega^n$ be an arbitrary probability measure \st{} $\nu \ll \mu$. Define $f = d\nu/d\mu$ and note that since $\nu$ is a probability measure, we have $\mu[f] = \mu[d\nu/d\mu] = 1$. Hence, by the above 
        \begin{align*}
            \ent[\mu]{f}
            = \ent[\mu]{\frac{d\nu}{d\mu}}
            = \DKL{\nu}{\mu}, 
            \quad
            \mu\Bracket{\ent[\mu(\cdot|X_{-i})]{f}}
            = \nu[\DKL{\nu(\cdot|X_{-i})}{\mu(\cdot|X_{-i})}]. 
        \end{align*}

        Substituting the expressions into \eqref{eqATEent} recovers \eqref{eq:ATEKL}. 
    \end{enumerate}
\end{proof}

\printbibliography

\end{document}